\begin{document}
	
	\title{Avoidable Vertices and Edges in Graphs\thanks{An extended abstract of this work is to appear in the proceedings of
16th International Symposium on Algorithms and Data Structures (WADS 2019).}}
	
	\author[1]{Jesse Beisegel}
	\author[2]{Maria Chudnovsky}
	\author[4]{Vladimir Gurvich}
	\author[5,6]{Martin Milani\v c}
	\author[7]{Mary Servatius}
	\affil[1]{\normalsize BTU Cottbus-Senftenberg, Cottbus, Germany}
	\affil[2]{\normalsize Princeton University, Princeton, NJ, USA}
	\affil[4]{\normalsize National Research University, Higher School of Economics, Moscow, Russia}
	\affil[5]{\normalsize University of Primorska, UP IAM, Muzejski trg 2, SI-6000 Koper, Slovenia}
	\affil[6]{\normalsize University of Primorska, UP FAMNIT, Glagolja\v ska 8, SI-6000 Koper, Slovenia}
	\affil[7]{Koper, Slovenia}
	
	\date{\today}
	
	\maketitle
	
	\begin{abstract}
\begin{sloppypar}
A vertex in a graph is simplicial if its neighborhood forms a clique. We consider three generalizations of the concept of simplicial vertices: avoidable vertices (also known as \textit{OCF}-vertices), simplicial paths, and their common generalization avoidable paths, introduced here. We present a general conjecture on the existence of avoidable paths. If true, the conjecture would imply a result due to Ohtsuki, Cheung, and Fujisawa from 1976 on the existence of avoidable vertices, and a result due to Chv\'atal, Sritharan, and Rusu from 2002 the existence of simplicial paths. In turn, both of these results generalize Dirac's classical result on the existence of simplicial vertices in chordal graphs.

We prove that every graph with an edge has an avoidable edge, which settles the first open case of the conjecture.
We point out a close relationship between avoidable vertices in a graph and its minimal triangulations, and identify new algorithmic uses of avoidable vertices, leading to new polynomially solvable cases of the maximum weight clique problem in classes of graphs simultaneously generalizing chordal graphs and circular-arc graphs.
Finally, we observe that the proved cases of the conjecture have interesting consequences for highly symmetric graphs: in a vertex-transitive graph every induced two-edge path closes to an induced cycle, while in an edge-transitive graph every three-edge path closes to a cycle and every induced three-edge path closes to an induced cycle.
\end{sloppypar}
\end{abstract}
	
	\section{Introduction}
	

A graph $G$ is \emph{chordal} if every cycle in $G$ of length at least four has a chord. Chordal graphs are well-known to possess many good structural and algorithmic properties~\cite{MR1320296,MR0130190,MR0186421,MR2063679}. The main goal of this paper is to study certain concepts related to chordal graphs in the framework of more general graph classes. The starting point for our research is a result due to Dirac~\cite{MR0130190} stating that
every minimal cutset in a chordal graph is a clique, which
implies that every chordal graph has a \emph{simplicial vertex}, that is, a vertex whose neighborhood is a clique~\cite{MR0186421}. Denoting by $P_k$ the $k$-vertex path, this result can be formulated as follows.

\begin{theorem}\label{thm0}
Every chordal graph has a vertex $v$ such that $v$ is not the midpoint of any induced $P_3$.
\end{theorem}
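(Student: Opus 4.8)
The plan is to first observe that the condition in the statement is merely a reformulation of simpliciality: a vertex $v$ is \emph{not} the midpoint of an induced $P_3$ precisely when every two neighbors of $v$ are adjacent, i.e.\ when $N(v)$ is a clique. So Theorem~\ref{thm0} is equivalent to the classical assertion that every chordal graph contains a simplicial vertex, and I would in fact prove the slightly stronger statement that every chordal graph that is not complete has two \emph{non-adjacent} simplicial vertices, the case of complete graphs (and of graphs on at most one vertex) being trivial.

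The proof is by induction on $|V(G)|$, using Dirac's cutset lemma quoted above, that every minimal separator of a chordal graph is a clique. Given a non-complete chordal graph $G$, pick non-adjacent vertices $a,b$ and a minimal $a$--$b$ separator $S$; let $A$ be the vertex set of the component of $G-S$ containing $a$, and set $G_A = G[A\cup S]$, which is again chordal and has fewer vertices than $G$ since $b\notin A\cup S$. If $G_A$ is complete, then $a$ is simplicial in $G_A$; otherwise, by the induction hypothesis $G_A$ has two non-adjacent simplicial vertices, and since $S$ is a clique at most one of them can lie in $S$, so at least one lies in $A$. Either way we obtain a simplicial vertex $v_A$ of $G_A$ with $v_A\in A$. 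The key point is that, because $A$ is a connected component of $G-S$, every neighbor of $v_A$ in $G$ lies in $A\cup S$, so $N_G(v_A)=N_{G_A}(v_A)$ and hence $v_A$ is already simplicial in $G$. Applying the same argument to the component of $G-S$ containing $b$ yields a simplicial vertex $v_B$ of $G$ inside that component, and since $G$ has no edges between distinct components of $G-S$, the vertices $v_A$ and $v_B$ are non-adjacent, completing the induction.

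The main thing to get right is exactly this last transfer step: a vertex that is simplicial in an induced subgraph need not be simplicial in the whole graph, so one must locate the simplicial vertex \emph{strictly inside} the component $A$, where its neighborhood is unchanged. This is precisely what forces the "two non-adjacent simplicial vertices" strengthening to be carried through the induction, so that one can always choose a simplicial vertex avoiding the clique $S$. Everything else — the reduction to simpliciality, the base case of complete graphs, and the fact that there are no edges between distinct components of $G-S$ — is routine.
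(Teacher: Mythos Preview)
Your proof is correct and is precisely the classical argument the paper alludes to when it writes that Dirac's result ``that every minimal cutset in a chordal graph is a clique \ldots\ implies that every chordal graph has a simplicial vertex''; the paper does not spell out a proof of Theorem~\ref{thm0} beyond this reference, so your inductive two-non-adjacent-simplicial-vertices argument using clique separators is exactly the intended route.
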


This theorem was generalized in the literature in various ways, see, e.g.,~\cite{ohtsuki1976minimal,MR956561,MR3115297,MR1927566,MR1971502}.
Two particular ways of generalizing \Cref{thm0} include:
\begin{enumerate}[(i)]
  \item proving a property of general graphs that, when specialized to chordal graphs, results in the existence of a simplicial vertex, and
  \item generalizing the `simpliciality' property from vertices, which are paths of length $0$, to longer induced paths, and proving the existence of such paths for graphs excluding suitably longer cycles.
\end{enumerate}

Let us explain in more detail the corresponding results.

\subsection{First generalization -- from chordal graphs to all graphs}

A generalization of the first kind is given by the following theorem, which follows from~\cite[Theorem 3]{ohtsuki1976minimal} as well as from \cite[Main Theorem~4.1]{berry1998separability} and~\cite[Lemma 2.3]{aboulker2015vertex}.

\begin{theorem}\label{thm1}
Every graph $G$ has a vertex $v$ such that every induced $P_3$ having $v$ as its midpoint is contained in an induced cycle in $G$.
\end{theorem}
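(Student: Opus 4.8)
The plan is to establish, by induction on $|V(G)|$, the following strengthening: \emph{if $G$ is connected, $K\subseteq V(G)$ is a clique, and $G-K$ is connected and nonempty, then $G$ has an avoidable vertex lying in $V(G)\setminus K$}. Here I call $v$ \emph{avoidable} when every induced $P_3$ with midpoint $v$ is contained in an induced cycle of $G$; equivalently, $v$ is avoidable iff for every pair of non-adjacent vertices $x,y\in N_G(v)$ there is an induced path from $x$ to $y$ all of whose internal vertices lie outside $N_G[v]$ (given such a path, adding $v$ closes it into an induced cycle, and conversely). The theorem follows by taking $K=\emptyset$, after first replacing $G$ by one of its connected components --- an avoidable vertex of a component is avoidable in $G$, since all the relevant induced $P_3$'s and induced cycles stay within the component.

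For the inductive step, if $G$ is complete then it has no induced $P_3$ at all, so every vertex is avoidable, and $V(G)\setminus K\ne\emptyset$. Otherwise $G$ has a nonempty minimal separator $S$, and it is standard that $G-S$ then has at least two \emph{full} components, i.e.\ components $C$ with $N_G(C)=S$. As $K$ is a clique, $K\setminus S$ lies inside a single component of $G-S$, so we may fix a full component $B$ of $G-S$ with $B\cap(K\setminus S)=\emptyset$; being disjoint from $S$ as well, $B$ then satisfies $B\cap K=\emptyset$. Also fix a second full component $A\ne B$ of $G-S$. The key object is the ``block'' $G_B$ obtained from $G[B\cup S]$ by adding all missing edges inside $S$: it is connected, $S$ is a clique of $G_B$, $G_B-S=G[B]$ is connected and nonempty, and $|V(G_B)|=|B|+|S|\le|V(G)|-|A|<|V(G)|$. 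Applying the induction hypothesis to $(G_B,S)$ produces an avoidable vertex $v$ of $G_B$ with $v\in V(G_B)\setminus S=B\subseteq V(G)\setminus K$.

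The remaining work is to transfer avoidability of $v$ from $G_B$ to $G$. Since $v$ lies in the full component $B$ of $G-S$, all neighbors of $v$ in $G$ lie in $B\cup S$, and since the only edges of $G_B$ not already in $G$ lie inside $S$ --- none of them incident to $v$ --- we get $N_{G_B}[v]=N_G[v]\subseteq B\cup S$. Now take any induced $P_3$ in $G$ with midpoint $v$ and endpoints $x,y$; then $x,y\in B\cup S=V(G_B)$, and it is enough to find \emph{some} walk from $x$ to $y$ in $G$ whose internal vertices all avoid $N_G[v]$, because then a shortest such path is induced and, together with $v$, forms an induced cycle through $x,v,y$. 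If $x,y\in S$: as $A$ is full, both $x$ and $y$ have a neighbor in the connected set $A$, which is disjoint from $B\cup S\supseteq N_G[v]$, so a path through $A$ does the job. Otherwise $xy$ is a non-edge of $G_B$ too, so $\{x,v,y\}$ induces a $P_3$ in $G_B$; avoidability of $v$ there gives an induced path $R$ from $x$ to $y$ in $G_B$ avoiding $N_{G_B}[v]=N_G[v]$, and replacing each edge of $R$ that lies inside $S$ (the only edges of $R$ that might not be edges of $G$) by a detour through $A$ yields the required walk, since $A$ is disjoint from both $B\cup S$ and $N_G[v]$.

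I expect this last transfer to be the main obstacle, and it is precisely why the auxiliary clique $K$ is carried through the induction: if the recursion were asked only for \emph{some} avoidable vertex of $G_B$, it could return one incident to the new (``fill'') edges inside $S$, which need not be avoidable in $G$ --- forcing $v\notin S$ is what keeps $N_{G_B}[v]=N_G[v]$ and makes the detours through $A$ harmless. (The block $G_B$ is exactly one step of building a minimal triangulation of $G$; in the same spirit, an alternative route is to start from an arbitrary minimal triangulation $H$ of $G$, take a simplicial vertex of $H$ via \Cref{thm0}, and deduce avoidability in $G$ from the structure of minimal separators --- such a vertex lies outside every minimal separator of $G$ carrying a fill edge, the endpoints of any induced $P_3$ at $v$ lie in one common such separator, and its two full components supply the induced cycle.)
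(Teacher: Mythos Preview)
Your proof is correct, and it takes a genuinely different route from the paper. The paper does not prove \Cref{thm1} directly but derives it in two ways: (i) via \Cref{thm:characterization-avoidable}, by taking a simplicial vertex in a minimal triangulation and showing it is avoidable in $G$, and (ii) via the stronger \Cref{thm:avoidable-vertices}, whose inductive proof carries a non-universal \emph{vertex} $v$ through the argument, locates a non-avoidable vertex $b\notin N[v]$, extracts a separator inside $N(b)$, and in the delicate case adds an auxiliary vertex $d$. Your induction instead carries a \emph{clique} $K$, splits along an arbitrary minimal separator $S$, and recurses into the block $G_B=G[B\cup S]$ with $S$ completed to a clique; the transfer step via detours through a second full component $A$ is exactly what makes the choice $v\notin S$ pay off. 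This is essentially one recursive step of the minimal-triangulation construction done by hand (as you note in your parenthetical), so it sits between the paper's two arguments: more self-contained than route~(i), since you never invoke global facts about minimal triangulations, and cleaner than route~(ii), since you avoid the case split on whether $b$ is universal in $G-C$ and the auxiliary-vertex trick. What you lose compared to \Cref{thm:avoidable-vertices} is the specific conclusion ``avoidable vertex outside $N[v]$'' for a prescribed non-universal $v$; your strengthening yields instead ``avoidable vertex outside a prescribed clique $K$ with $G-K$ connected,'' which is a different (and for the bare existence statement equally sufficient) localisation.
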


The above property of vertices will be one of the central concepts for this paper and we formalize it as follows.

\begin{definition}\label{def:avoidable-vertex}
A vertex $v$ in a graph $G$ is said to be \emph{avoidable} if between any pair $x$ and $y$ of neighbors of $v$ there exists an $x,y$-path, all the internal vertices of which avoid $v$ and all neighbors of $v$. Equivalently, a vertex $v$ is \emph{avoidable} if every induced $P_3$ with midpoint $v$ closes to an induced cycle.
\end{definition}

This terminology is motivated by considering a setting where $G$ represents a symmetric acquaintance relation on a group of people. In this setting, the property of person (equivalently, vertex) $a$ being avoidable can be interpreted as follows: whenever two acquaintances of $a$ need to share some information that they would not like to share with $a$, they can do so by passing the information along a path completely avoiding both $a$ and all her other acquaintances. Thus, $a$ is in a sense avoidable, as information can be passed around in her immediate proximity without her knowledge.

\begin{sloppypar}
Note that every simplicial vertex in a graph is avoidable. If we analyze avoidable vertices in graph classes, rather than in general graphs, we see that this definition is a generalization of many well known concepts. For example, in a tree a vertex is avoidable if and only if it is a leaf, while in a chordal graph a vertex is avoidable if and only if it is simplicial. With this terminology, \Cref{thm1} can be equivalently stated as follows.
\end{sloppypar}

\begin{theorem}\label{thm:avoidable}
Every graph has an avoidable vertex.
\end{theorem}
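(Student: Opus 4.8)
The plan is to prove the statement by a minimal-counterexample / extremal argument, exploiting the structure of minimal separators, which is the natural habitat of simpliciality-type results. Concretely, I would proceed by induction on the number of vertices of $G$. If $G$ is complete, every vertex is simplicial, hence avoidable, and we are done. So assume $G$ is not complete and pick a minimal separator $S$ of $G$; let $C$ be one of the connected components of $G - S$ and let $D = V(G) \setminus (S \cup V(C))$ be the union of the remaining components. A key preliminary observation is that because $S$ is a minimal separator, every vertex of $S$ has a neighbour in $C$ and a neighbour in $D$.

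The heart of the argument is to find the avoidable vertex inside $G[S \cup V(C)]$ (an induced subgraph; if it equals $G$ we instead work with one strictly smaller full component). By the induction hypothesis, the graph $G[S \cup V(C)]$ has an avoidable vertex $v$; I would argue that $v$ can be chosen to lie in $C$. The point is that if all avoidable vertices of $G[S\cup V(C)]$ lay in $S$, then using the fact that $S$-vertices have neighbours in $C$, together with the paths through $D$ available in $G$ but not in $G[S \cup V(C)]$, one can push the avoidability across $S$; more carefully, one takes a minimal separator $S'$ of $G[S \cup V(C)]$ contained in $C \cup S$ and iterates, bounding the process by the size of $C$. This is the step I expect to be the main obstacle: controlling which side the avoidable vertex lives on, and making sure the recursion is set up so that it terminates on a vertex of $C$ rather than getting stuck in $S$.

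Once $v \in C$ is an avoidable vertex of $G[S \cup V(C)]$, I claim $v$ is avoidable in $G$. Take any induced path $x\textrm{-}v\textrm{-}y$ in $G$ with $x,y$ nonadjacent. Since $v \in C$ and $S$ separates $C$ from $D$, both $x$ and $y$ lie in $S \cup V(C)$, so $x\textrm{-}v\textrm{-}y$ is an induced $P_3$ of $G[S \cup V(C)]$; by avoidability there, it closes to an induced cycle $O$ inside $G[S \cup V(C)]$, and $O$ is also an induced cycle of $G$ because $G[S\cup V(C)]$ is an induced subgraph. Hence $v$ is avoidable in $G$, completing the induction. Throughout, the two facts doing the real work are: (a) minimal separators are "seen" entirely from each neighbouring component, and (b) avoidability is a statement about induced paths and induced cycles, so it transfers upward from induced subgraphs for vertices whose relevant $P_3$'s cannot escape the subgraph. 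I would also double-check the base of the recursion and the edge cases where $G - S$ has exactly two components, each adjacent to all of $S$, to be sure the component chosen for the induction is a proper induced subgraph.
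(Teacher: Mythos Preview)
Your framework is close in spirit to the paper's direct proof (Theorem~\ref{thm:avoidable-vertices}): work inductively, use a separator, find the avoidable vertex in a smaller piece, and transfer it back because its whole neighbourhood lives in that piece. Step~6 of your plan is correct exactly as you state it.

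The genuine gap is step~5. You correctly flag it as the main obstacle, but the fix you sketch does not work as written. If the avoidable vertex $a$ of $G[S\cup V(C)]$ lands in $S$, there is no mechanism in your argument that forces a subsequent iteration to produce a vertex in $C$: taking ``a minimal separator $S'$ of $G[S\cup V(C)]$'' gives you the same problem one level down, and nothing in the setup guarantees the process migrates into $C$ rather than bouncing around inside $S$ (or into newly-created separator layers). The phrase ``push the avoidability across $S$'' is doing no actual work. Concretely, vertices of $S$ have neighbours in $D$, so an $a\in S$ that is avoidable in $G[S\cup V(C)]$ need not be avoidable in $G$, and your recursion has no parameter that strictly decreases while forcing the outcome into $C$.

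The paper resolves exactly this difficulty by strengthening the inductive statement: it proves that for every non-universal vertex $v$ there is an avoidable vertex in $V(G)\setminus N[v]$. With this loaded hypothesis, one starts from a non-avoidable vertex $b\notin N[v]$, takes a minimal $S\subseteq N(b)$ with $G-(S\cup\{b\})$ disconnected, lets $C$ be the component containing $v$, and applies the hypothesis to $G-C$ relative to $b$ (or, when $b$ is universal in $G-C$, to a carefully modified graph obtained by replacing $C$ with a single new vertex $d$ adjacent to $S\cup\{b\}$). The strengthened hypothesis is precisely what pins the avoidable vertex to the correct side of the separator, which is the step your proposal is missing. The paper also notes two alternative routes that sidestep the issue entirely: via minimal triangulations (combine Theorems~\ref{thm0} and~\ref{thm:characterization-avoidable}) or via LBFS end vertices (Corollary~\ref{corollary2}).
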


The notion of avoidable vertices has appeared in the literature (with different terminology) in a variety of settings. To our knowledge, the earliest appearance was in the paper from 1976 by \citet{ohtsuki1976minimal}, where avoidable vertices were characterized as exactly the vertices from which a minimal elimination ordering can start.
Here, a \emph{minimal elimination ordering} of a graph $G = (V,E)$ is a procedure of eliminating vertices one at a time so that before each vertex is removed, its neighborhood is turned into a clique, and the resulting set $F$ of edges added throughout the procedure is an inclusion-minimal set of non-edges of $G$ such that $(V,E\cup F)$ is a chordal graph (in other words, $(V,E\cup F)$ is a \emph{minimal triangulation} of $G$). Given a graph $G$, an avoidable vertex in $G$ can be found in linear time using graph search algorithms such as Lexicographic Breadth First Search (LBFS)~\cite{rose1976algorithmic} (see also~\cite{MR1745069}) or Maximum Cardinality Search (MCS)~\cite{berry2004maximum}. The presentation closest to our setting is the one used by~\citet{ohtsuki1976minimal}. In fact, \citet{berry2004maximum,berry2010graph} named avoidable vertices \emph{OCF-vertices}, after the initials of the three authors of \cite{ohtsuki1976minimal}.

\subsection{Second generalization -- from vertices to longer paths}

In order to generalize the notion of simplicial vertices to longer paths, the next definition, partially following~\citet{MR1927566}, will be useful.

\begin{definition}\label{def:2-extension}
Given an induced path $P$ in a graph $G$, an \emph{extension} of $P$ is any induced path in $G$ obtained by adding to $P$ one edge at each end.
An induced path is said to be \emph{simplicial} if it has no extension.
\end{definition}

\begin{sloppypar}
In this terminology, \Cref{thm0} can be stated as follows: every graph without induced cycles of length more than $3$ has a simplicial induced $P_1$. Chv\'atal et al.~\cite{MR1927566} generalized this result as follows.
\end{sloppypar}

\begin{theorem}[\citet{MR1927566}]\label{thm:higher-chordality}
For each $k\ge 1$, every $\{C_{k+3}, C_{k+4}, \ldots\}$-free graph that contains an induced $P_k$ also contains a simplicial induced $P_k$.
\end{theorem}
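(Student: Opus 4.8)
Fix $k\ge 1$ and let $G$ be a $\{C_{k+3},C_{k+4},\ldots\}$-free graph that contains an induced $P_k$. The plan is to extract a simplicial induced $P_k$ from a \emph{maximal} induced path, using the absence of long induced cycles to block the extensions. We may assume $G$ is connected: otherwise replace $G$ by the component containing a fixed induced $P_k$ (an induced subgraph satisfying the same hypotheses), noting that a simplicial induced $P_k$ of that component is simplicial in $G$, since any extension of it lies within the component.

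Starting from an induced $P_k$, repeatedly prolong it by adding a vertex at one of its ends while it remains an induced path; since $G$ is finite this terminates in a maximal induced path $Q=q_1q_2\cdots q_m$ with $m\ge k$. If $m=k$ then $Q$ is already simplicial, since any extension of $Q$ would be an induced path on $k+2$ vertices having $Q$ as a sub-path, contradicting maximality. So suppose $m>k$ and consider the prefix $Q'=q_1\cdots q_k$. If $Q'$ is simplicial we are done; otherwise it has an extension $q_0q_1\cdots q_kq_{k+1}^{\ast}$, an induced $P_{k+2}$. Since $q_0\sim q_1$ while $q_0\not\sim q_2,\ldots,q_k$, maximality of $Q$ prevents prepending $q_0$ to $Q$, so $q_0$ must be adjacent to some $q_j$ with $j\ge k+1$; taking $j$ minimal, $q_0q_1\cdots q_jq_0$ is an induced cycle of length $j+1$. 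As $j\ge k+1$ and $G$ has no induced cycle of length $\ge k+3$, this forces $j=k+1$. Hence $m\ge k+1$, $q_0\sim q_{k+1}$, the cycle $C:=q_0q_1\cdots q_{k+1}q_0$ is an induced $C_{k+2}$, and $q_{k+1}^{\ast}\notin V(C)$ satisfies $q_{k+1}^{\ast}\sim q_k$ and $q_{k+1}^{\ast}\not\sim q_0,\ldots,q_{k-1}$.

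The remaining task is to analyse, according to how $q_{k+1}^{\ast}$ and the tail $q_{k+2},\ldots,q_m$ of $Q$ attach to $C$, and in each case either to contradict the maximality of $Q$, or to assemble from $C$ together with these vertices an induced cycle of length at least $k+3$, or to exhibit another induced $P_k$ (such as the prefix $q_0q_1\cdots q_{k-1}$ of the extension, or a shift of $Q$) for which the argument re-runs with a strictly decreasing potential, so that after finitely many steps a simplicial $P_k$ appears. I expect this last step to be the crux: the naive constructions only ever produce an induced $C_{k+2}$, which is one vertex short of the forbidden range, so ruling out the stubborn $C_{k+2}$ configurations will require either a finer extremal choice of $Q$ (for instance minimising its length among all maximal induced paths on at least $k$ vertices, or fixing a breadth-first layering of $G$ from one end and choosing $Q$ lexicographically extremal with respect to it) or additional structural input, in the spirit of Dirac's use of minimal separators in the proof of \Cref{thm0}.
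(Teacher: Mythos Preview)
This theorem is not proved in the paper; it is quoted from Chv\'atal, Rusu, and Sritharan (reference \texttt{MR1927566}) and used as a black box. There is therefore no proof in the paper to compare your attempt against.

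Regarding your proposal on its own merits: it is explicitly incomplete, and you say so yourself. The reduction up to the induced $C_{k+2}$ with the extra vertex $q_{k+1}^\ast$ and the tail $q_{k+2},\ldots,q_m$ is carried out correctly, but your final paragraph is a list of hoped-for strategies rather than an argument. The gap is real and is exactly where you locate it: nothing in the configuration you have reached forces an induced cycle of length at least $k+3$, and you have not exhibited any potential that strictly decreases when you ``re-run'' the argument on a shifted or alternative $P_k$. Without such a potential the procedure need not terminate, and the suggestions you list (minimising the length of $Q$ among maximal induced paths, BFS-lexicographic choice, minimal separators) are not shown to work. A bare maximal induced path carries too little information to close the argument; until a sharper extremal choice or an auxiliary structural lemma is actually supplied and verified, this remains a sketch rather than a proof.
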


\subsection{A common generalization?}

Theorems~\ref{thm:avoidable} and~\ref{thm:higher-chordality} suggest that a further common generalization might be possible, based on the following generalization of~\Cref{def:avoidable-vertex} (definition of avoidable vertices) to longer paths.

\begin{definition}\label{def:avoidable-paths}
An induced path $P$ in a graph $G$ is said to be \emph{avoidable} if every extension of $P$ is contained in an induced cycle.
\end{definition}

Thus, in particular, a vertex $v$ in a graph $G$ is avoidable if and only if the corresponding one-vertex path is avoidable. Moreover, every simplicial induced path is (vacuously) avoidable.

We conjecture that the following common generalization of
Theorems~\ref{thm:avoidable} and~\ref{thm:higher-chordality} holds.

\begin{conjecture}\label{conj:paths}
For every $k\ge 1$, every graph that contains an induced $P_k$ also contains an avoidable induced $P_k$.
\end{conjecture}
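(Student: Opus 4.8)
The plan is to establish \Cref{conj:paths} by induction on $k$, with the base case $k=1$ being exactly \Cref{thm:avoidable}, and with the first genuinely new instance, $k=2$ (existence of an avoidable edge), as the heart of the argument; longer paths would then be treated by a peeling argument reducing an induced $P_k$ to the $P_2$ (or $P_{k-1}$) situation in a suitable subgraph. For $k=2$ I would first reduce to the case where $G$ is connected, since avoidability of an edge is decided entirely within its connected component (every extension, and every induced cycle through it, stays in that component), and then induct on $|V(G)|$, the base case $G=K_2$ being trivial because its unique edge has no extension and is vacuously avoidable.

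For the inductive step, pick any edge $xy$; if it is avoidable we are done, so assume it has a ``bad'' extension, i.e.\ an induced $P_4$ with vertex sequence $a,x,y,b$ that lies in no induced cycle. Unravelling \Cref{def:avoidable-paths} shows that this is equivalent to saying that $a$ and $b$ lie in different connected components of the graph $H$ obtained from $G$ by deleting $(N(x)\cup N(y))\setminus\{a,b\}$: a shortest $a,b$-path in $H$, together with $x$ and $y$, would otherwise close the $P_4$ into an induced cycle. Let $A$ be the component of $H$ containing $a$. The idea is to recurse on the subgraph induced by $A$ together with all vertices of $(N(x)\cup N(y))\setminus\{a,b\}$ having a neighbour in $A$ — a proper induced subgraph of $G$, since it misses at least the component of $b$, and one that still contains an edge (namely $ax$) — to obtain an avoidable edge there by the inductive hypothesis, and then to lift it to $G$. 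The delicate point, and the step I expect to be the main obstacle, is this lifting: the set $N_G(A)$ separating $A$ from the rest of $G$ is not a clique in general, so no extension of the chosen edge and no induced cycle certifying it is automatically confined to the recursion; one would need to precede the argument by a clique-separator / atom decomposition (in the spirit of the analysis known for avoidable vertices), reducing to graphs without a clique separator and then reasoning more globally there, while simultaneously guaranteeing that the chosen edge genuinely moves ``deeper'' so that the process terminates.

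A possibly cleaner alternative for $k=2$ is via minimal triangulations, exploiting the relationship between avoidable vertices and minimal triangulations mentioned in the introduction. One would try to prove that an edge $xy$ is avoidable in $G$ if and only if $G$ has a minimal triangulation $H$ in which $xy$ has no extension (equivalently, $xy$ lies in a unique maximal clique of $H$); granting this, \Cref{thm:higher-chordality} with $k=2$ immediately yields a simplicial, hence extension-free, edge of the chordal graph $H$, and pulling it back finishes the proof. Here the obstacle is the ``only if'' direction together with the need to \emph{choose} the triangulation well: avoidability of $xy$ in $G$ does not by itself preclude extensions of $xy$ in an arbitrary minimal triangulation, since fill edges can destroy a $G$-extension while creating new induced $P_4$'s elsewhere, so one must construct a minimal triangulation adapted to $xy$, for instance by eliminating $x$ and $y$ as early as possible in a minimal elimination ordering.

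Finally, for $k\ge 3$ the natural attempt is an induction on $k$: given an induced $P_k=v_1\cdots v_k$, one would like to show that an avoidable representative exists provided all shorter induced paths (or all edges) have avoidable representatives, the intended mechanism being to pass to the part of $G$ ``seen'' from one end of the path and to make the penultimate edge avoidable there. However, extensions of a long induced path can wind around the graph in ways that a single separation does not localize, and controlling which induced cycles and which new extensions appear upon passing to a subgraph becomes much harder than in the $k\le 2$ cases; finding the right progress measure for this reduction is exactly the part I expect to be the crux, and it is where the conjecture is likely to resist a proof.
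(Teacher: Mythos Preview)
First, a framing point: the statement is a \emph{conjecture}, and the paper does not prove it in full. What the paper establishes is exactly the cases $k=1$ (via \Cref{thm:avoidable}) and $k=2$ (via Lemmas~\ref{lemma3} and~\ref{theorem2}, combined into \Cref{thm:avoidable-edges}); the cases $k\ge 3$ are left open. So your outline for $k\ge 3$ is not being compared against anything in the paper --- you are attempting more than the paper does, and you yourself correctly flag that the ``peeling'' reduction lacks a working progress measure. That part should be read as a research plan, not a proof.

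For $k=2$, your approach and the paper's diverge substantially. The paper does \emph{not} start from an arbitrary edge and recurse on one side of a separator. Instead it proceeds in two steps: (i) a direct structural argument (\Cref{lemma3}) showing that every $2K_2$-free graph with at least two edges has two avoidable edges; (ii) a minimal-counterexample argument (\Cref{theorem2}) showing that for every non-universal edge $e$ there is an avoidable edge \emph{independent of} $e$. The crucial technical device in (ii) is to pick, among edges independent of $e$, one $f$ with the fewest edge-neighbours; this extremal choice, together with a short chain of claims about the component structure around $f$, is what makes the lifting go through. Your own first approach stalls precisely at the lifting step you identify: when you recurse on $A$ together with its attachment in $N(x)\cup N(y)$, the boundary is not a clique, and an edge avoidable in the subgraph can acquire a bad extension in $G$ through the other side of the separation. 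The paper sidesteps this not by clique-separator decomposition but by the extremal choice of $f$ and by first disposing of the $2K_2$-free case, which is exactly the regime where ``recurse on a smaller side'' makes no sense.

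Your second alternative for $k=2$, via minimal triangulations, is not pursued in the paper. The vertex analogue (\Cref{thm:characterization-avoidable}) works because simpliciality of a single vertex in a minimal triangulation is easy to engineer; for an edge one would need $N_H(x)\cup N_H(y)$ to be a clique in some minimal triangulation $H$, and it is not clear that avoidability of $xy$ in $G$ suffices for this --- the obstacle you name (fill edges destroying one extension while creating another) is real, and the paper gives no indication that such a characterization holds. So this route, while plausible to explore, is not a shortcut to the paper's result.
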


\Cref{thm:avoidable} implies the conjecture for $k = 1$, while \Cref{thm:higher-chordality} implies it for every positive integer $k$, provided we restrict ourselves to the class of graphs without induced cycles of length more than $k+2$. Indeed, if $G$ is a $\{C_{k+3}, C_{k+4}, \ldots\}$-free graph that contains an induced $P_k$, then by \Cref{thm:higher-chordality} graph $G$ contains a simplicial induced $P_k$, and every simplicial induced path is avoidable.

\subsection{Our results}

The results of this paper can be summarized as follows.

\medskip
\begin{sloppypar}
\noindent\textbf{1. Characterization, existence, and computation of avoidable vertices.} Following the work of~\citet{ohtsuki1976minimal}, we revisit the connection between avoidable vertices and minimal triangulations of graphs by characterizing avoidable vertices in a graph $G$ as exactly the simplicial vertices in some minimal triangulation of $G$ (\Cref{thm:characterization-avoidable}). Using properties of LBFS that follow from works of~\citet{berry1998separability} and \citet{aboulker2015vertex}, we show that every graph with at least two vertices contains a diametral pair of avoidable vertices, i.e., a pair of avoidable vertices with largest distance from one another (\Cref{thm:two-avoidable}). The same approach shows that a pair of distinct avoidable vertices (though not necessarily a diametral pair) in a given graph $G$ with at least two vertices can be computed in linear time (\Cref{thm:computation}).
\end{sloppypar}

\medskip
\begin{sloppypar}
\noindent\textbf{2. New polynomially solvable cases of the maximum weight clique problem.} A graph is $1$-perfectly orientable if its edges can be oriented so that the out-neighborhood of every vertex induces a tournament, and hole-cyclically orientable if its edges can be oriented so that each induced cycle of length at least four is oriented cyclically. We use the concept of avoidable vertices to show that on hole-cyclically orientable graphs, LBFS computes a bisimplicial elimination ordering, which is simultaneously a circular-arc elimination ordering. This leads to an algorithm that computes a maximum weight clique of a given vertex-weighted hole-cyclically orientable graph $G$ in time ${\mathcal{O}}(|V(G)|(|V(G)|\log |V(G)|+|E(G)|\log\log |V(G)|))$ (\Cref{thm:max-weight-clique}). This result generalizes the well-known fact of polynomial-time solvability of the maximum weight clique problem for the classes of chordal graphs and circular-arc graphs, and gives a polynomial-time algorithm for the maximum clique problem in the class of $1$-perfectly orientable graphs (for which the complexity of the independent set and $k$-coloring problems are still open).
\end{sloppypar}

\medskip
\begin{sloppypar}
\noindent\textbf{3. Existence of avoidable edges.} We show that for every graph $G$ and every non-universal vertex $v\in V(G)$ there exists an avoidable vertex in the non-neighborhood of $v$ (\Cref{thm:avoidable-vertices}). While this result clearly follows from \Cref{thm:two-avoidable}, we give a direct proof that is not based on any graph search (like LBFS). We then adapt this approach to prove the existence of two avoidable edges in any graph with at least two edges (\Cref{thm:avoidable-edges}). This settles in the affirmative the case $k = 2$ of \Cref{conj:paths} and generalizes the case $k = 2$ of \Cref{thm:higher-chordality}.
\end{sloppypar}

\medskip
\begin{sloppypar}
\noindent\textbf{4. Implications for vertex- and edge-transitive graphs.}
We derive some consequences of the above existence results for avoidable vertices and edges for highly symmetric graphs.
More specifically, we show that in a vertex-transitive graph every induced two-edge path closes to an induced cycle (\Cref{thm:VT}), while in an edge-transitive graph every $3$-edge path closes to a cycle (\Cref{thm:ET}) and every induced $3$-edge path closes to an induced cycle (\Cref{cor:edge-transitive-induced-P_4}). Although these structural results are straightforward consequences of the results on avoidable vertices and edges, we are not aware of any statement of these results in the literature. For all the three statements we give examples showing that analogous statements fail for longer paths.
\end{sloppypar}

\subsection{Structure of the paper}

In \Cref{sec:prelim}, we summarize the definitions of some of the most frequently used notions in the paper. In \Cref{sec:concepts}, we discuss structural aspects of avoidable vertices in graphs, including a characterization of avoidable vertices as simplicial vertices in some minimal triangulation of the graph and a new proof of the existence result. \Cref{sec:faster-computation} is devoted to algorithmic issues regarding the problem of efficient computation of avoidable vertices in a given graph. In \Cref{sec:applications}, we present an algorithmic application of this concept to the maximum weight clique problem, by identifying a rather general class of graphs in which every avoidable vertex is bisimplicial, which leads to a polynomial-time algorithm for the maximum weight clique problem in this class of graphs. Implications of this approach for digraphs are also discussed. In \Cref{sec:edges}, we settle in the affirmative the case $k = 2$ of \Cref{thm:avoidable}. Some consequences of the above existence results for avoidable vertices and edges of highly symmetric graphs are summarized in \Cref{sec:symmetric}. We conclude the paper in \Cref{sec:conclusion} with some open problems.

	\section{Preliminaries}\label{sec:prelim}


All graphs considered in this paper will be finite and with non-empty vertex set, but may be either undirected or directed. We will refer to an undirected graph simply as a \emph{graph} and denote it  as $ G=(V,E) $ where $ V $ is the vertex set and $ E $ the edge set. For a graph $G = (V,E)$, we also write $V(G)$ for $V$ and $E(G)$ for $E$. A directed graph will be called a \emph{digraph} and denoted as $ D=(V,A) $ where $ V $ is again the set of vertices and $ A $ the set of arcs. Graphs and digraphs in this paper will always be simple, that is, without loops or multiple edges (but pairs of oppositely oriented arcs in digraphs are allowed). Unless stated otherwise we use standard graph and digraph terminology and notation. In particular, an edge in a graph connecting two vertices $ u $ and $ v $ will be denoted as $ \{u,v\} $ or $ uv $. An arc in a digraph pointing from $ u $ to $ v $ will be denoted as $ (u,v) $ or $ uv $. The set of all vertices adjacent to a vertex $ v $ in $ G $, i.e., its \emph{neighborhood}, is denoted by $ N_G(v) $ (or simply $N(v)$ if the graph is clear from the context). The cardinality of $N_G(v)$ is the \emph{degree} of $ v $, denoted by $ d_G(v) $. Similarly, the \emph{closed neighborhood} $ N_G(v)\cup \{v\} $ is written as $ N_G[v] $ (or simply $N[v]$ if the graph is clear from the context).

Given a digraph $ D=(V,A) $, the \emph{in-neighborhood} of a vertex $ v $ in $ D $, denoted by $ N^-_D(v) $, is the set of all vertices $ w $ such that $ (w,v) \in A $. Similarly, the \emph{out-neighborhood} of $ v $ in $ D $, denoted by $ N^+_D(v) $,  is the set of all vertices $ w $ such that $ (v,w) \in A $. An \emph{orientation} of a graph $ G=(V,E) $ is a digraph obtained by assigning to each edge of $ G $ a direction. A \emph{tournament} is an orientation of the complete graph.

A \emph{clique} (resp., independent set) in a graph $ G $ is a set of pairwise adjacent (resp., non-adjacent) vertices of $ G $.
A \emph{cutset} in a graph $G=(V,E)$ is a set $S$ such that $G - S$ is disconnected. A cutset is \emph{minimal} if it does not contain any other cutset. The \emph{complement} of a graph $ G $ is the graph $ \overline{G} $ with the same vertex set as $ G $, in which two distinct vertices are adjacent if and only if they are not adjacent in $ G $. The \emph{line graph} $ L(G) $ of $ G $ is the graph which has vertex set $ E(G) $ and two vertices $ e $ and $ f $ of $ L(G) $ are adjacent if they have a vertex in common in $ G $. By $P_n$, $C_n$, and $K_n$ we denote the path, cycle, and the complete graph with $n$ vertices, respectively. By $2K_2$ we denote the graph consisting of two disjoint copies of $K_2$. A graph is \emph{bipartite} if its vertex set can be partitioned into two independent sets, which are then said to form a \emph{bipartition} of the graph. By $K_{m,n}$ we denote the complete bipartite graph with $m$ vertices in one part of the bipartition and $n$ in the other one.

Given a graph $ G $ and a set $ S $ of its vertices, we denote by $ G[S] $ the \emph{subgraph of $ G $ induced by $ S $}, that is, the graph with vertex set $ S $ and edge set $ \{uv \in E(G)\mid u,v \in S\} $. By $ G-S $ we denote the subgraph of $ G $ induced by $ V(G)\setminus S $, and when $ S = \{v\} $ we also write $ G-v $. Given an edge set $F\subseteq E(G)$, we  denote by $G-F$ the graph $(V(G),E(G)\setminus F)$; when $F = \{f\}$, we also write $G-f$. Given a family of graphs $\mathcal{F}$, we say that a graph is \emph{$\mathcal{F}$-free} if no induced subgraph of $G$ is isomorphic to a graph in $\mathcal{F}$.

A \emph{walk} of length $ k $ in a graph $ G $ is a sequence of vertices $ (v_1, \ldots , v_{k+1}) $ such that $ v_iv_{i+1} \in E $ for all $ i \in \{1, \ldots , k\} $. If a walk has the additional property that all vertices $ v_i $ for $ i \in \{1, \ldots k\} $ are pairwise distinct, we call it a \emph{path} (in $G$). We will sometimes denote a path $ (v_1, \ldots , v_k) $ as $ v_1- \ldots - v_k $. We say that a path $ P $ \emph{avoids} a vertex $ v $ if the intersection of $ N[v] $ and $ V(P) $ is empty, and we say that $ v $ \emph{intercepts} $ P $ otherwise. A walk $ (v_1, \ldots ,v_k) $ in $G$ such that all vertices $ v_i $ for $ i \in \{1, \ldots k\} $ are pairwise distinct, except that $v_1 = v_k$, is called a \emph{cycle} (in $G$). A \emph{chord} of a path (cycle) in a graph $G$ is an edge $e$ of $G$ between two vertices on the path (cycle) such that $e$ itself does not belong to this path (cycle). A path or a cycle in $G$ is said to be \emph{chordless} if it does not have any chords. We say that a path (resp., induced path) $ (v_1, \ldots , v_k) $ in a graph $G$ \emph{closes} to a cycle (resp., induced cycle) if there is a cycle (resp., induced cycle) in $G$ of the form $(v_1, \ldots , v_k, u_1, \ldots , u_p)$. Given a path $P$ in a graph $G$ and vertices $x,y\in V(P)$, we denote by $x-P-y$ the subpath of $P$ from $x$ to $y$. Concatenations of such paths into longer paths or cycles will be denoted similarly, by $x-P-y-Q-z$, etc.

The \emph{distance} between two vertices $ s $ and $ t $ is the length of a shortest path between these two vertices and will be denoted by $ \dist_G(s,t) $. A vertex $ x $ with largest distance from $ s $ is called \emph{eccentric} to $ s $ and its distance to $ s $ is the \emph{eccentricity} $ \ecc_G(s) $ of $ s $.  The \emph{diameter} of $G$, denoted $\diam(G)$, is the largest such value among all vertices.

\begin{sloppypar}
A permutation $ \sigma = (v_1, \ldots , v_n) $ of the vertices of $ G $ will be called a \emph{vertex ordering}. For a vertex ordering $\sigma = (v_1, \ldots , v_n)$ we write $v_i\prec_{\sigma} v_j$ if $i<j$. Following~\citet{heggernes2006minimal}, a graph $ H= (V, E\cup F) $ is called a \emph{triangulation} of $ G = (V,E)$ if $ H $ is chordal and we say that it is a \emph{minimal triangulation} if for every proper subset $ F' $ of $ F $ the graph $ (V,E \cup F') $ is not chordal. An elimination ordering $ \sigma $ of the vertices of $ G $ is a vertex ordering given as input of the Elimination Procedure, as defined in Algorithm \ref{elimination-procedure}, to compute greedily a triangulation of $ G $ called $ G^+_{\sigma} $. If $ G^+_{\sigma} $ is a minimal triangulation we call $ \sigma $ a \emph{minimal elimination ordering}. If $ G^+_{\sigma} $ is equal to $ G $, then $ \sigma $ is a \emph{perfect elimination ordering} and $ G $ is chordal by definition. The \emph{deficiency} of a vertex $ v $ is defined as the set $ D_G(v)= \{uw \notin E \mid  u,w \in N_G(v)\} $.
\end{sloppypar}

\begin{algorithm}
	\KwIn{A graph $G=(V,E)$ and an ordering $ \sigma = (v_1, \ldots ,v_n) $.}
	\KwOut{The filled graph $ G^+_{\sigma} $.}
	\vspace{0.25cm}
		$ G^0 =G $\;
		
		\For{$ i=1 $ \KwTo $ n $}{
			Let $ F^i = D_{G^{i-1}}(v)$\;
			Obtain $ G^i $ by adding the edges in $ F^i $ to $ G^{i-1} $ and removing $ v_i $\;
		}
		
		$ G^+_{\sigma} = (V, E \cup \bigcup_{i=1}^{n} F^i) $
		\vspace{0.25cm}
	\caption{Elimination Procedure}
	\label{elimination-procedure}
\end{algorithm}

A graph is \emph{cobipartite} if its complement is bipartite and \emph{circular arc} if it is the intersection graph of arcs on a circle. For further terms related to graphs and graph classes, we refer the reader to~\cite{MR1367739,MR1971502,MR2063679,MR1686154}.

	\section{Characterization and Existence of Avoidable Vertices}\label{sec:concepts}
	
The proof of~Theorem 3 in the paper~\cite{ohtsuki1976minimal} by
Ohtsuki, Cheung, and Fujisawa (which itself relied on earlier works of Rose~\cite{MR0270957,MR2939965,MR0341833}) leads to the characterization of avoidable vertices given by the following theorem. Since we are not aware of any explicit statement of this result in the literature, we state it here and give a short self-contained proof that does not rely on the concept of minimal elimination orderings.

\begin{theorem}\label{thm:characterization-avoidable}
Let $G = (V,E)$ be a graph and let $v\in V$. Then $v$ is avoidable in $G$ if and only if $v$ is a simplicial vertex in some minimal triangulation of $G$.
\end{theorem}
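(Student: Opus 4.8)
The plan is to prove the two implications separately, beginning with the easier one.

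\medskip
\noindent\emph{Avoidable $\Rightarrow$ simplicial in a minimal triangulation.}
First I would ``complete'' the vertex $v$: put $G' = (V, E\cup D_G(v))$, so that $v$ is simplicial in $G'$ with $N_{G'}(v)=N_G(v)$. Take any minimal triangulation $\widehat H$ of $G'-v$ and let $H$ be obtained from $\widehat H$ by re-inserting $v$ adjacent to exactly $N_G(v)$ (a clique of $G'-v$, hence of $\widehat H$). Then $H$ is chordal (one adds a simplicial vertex), $H$ is a triangulation of $G$ in which $v$ is simplicial, and no fill edge of $H$ over $G$ is incident with $v$; a routine check shows $H$ is moreover a minimal triangulation of $G'$, since deleting a fill edge $f$ of $\widehat H$ recreates a hole of $\widehat H$ that avoids $v$ and hence survives in $H-f$. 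It remains to upgrade this to minimality over $G$, that is, to prove that deleting any $xy\in D_G(v)$ from $H$ destroys chordality; this is where avoidability is used. Since $x,y\in N_G(v)$ and $xy\notin E(G)$, the path $x-v-y$ closes to an induced cycle $C = v-x-p_1-\cdots-p_k-y-v$ of $G$ with $k\ge 1$ and $p_1,\dots,p_k\notin N_G[v]$. As $N_H(v)=N_G(v)$, the vertex $v$ still has only $x$ and $y$ as neighbours on $C$, so $C$ is a cycle of $H-xy$; shortcutting $C$ repeatedly along chords in $H-xy$ while always keeping $v$ on the cycle (no chord is incident with $v$, and $x$ and $y$ are never discarded) yields a hole of $H-xy$ through $v$. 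Hence $H-xy$ is not chordal, and $H$ is the required minimal triangulation. I expect this direction to be essentially routine.

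\medskip
\noindent\emph{Simplicial in a minimal triangulation $\Rightarrow$ avoidable.}
Let $H$ be a minimal triangulation of $G$ with $v$ simplicial and let $x-v-y$ be an induced $P_3$ of $G$; I must close it to an induced cycle of $G$. The starting point is the local lemma that if $ab$ is a fill edge of $H$ then, since $H-ab$ is not chordal while $H$ is, every hole of $H-ab$ is a chordless $4$-cycle $a-c-b-d-a$ with $cd\notin E(H)$; in particular $N_H(a)\cap N_H(b)$ is not a clique of $H$. Taking $a=v$ shows that $v$ is incident with no fill edge, so $N_H(v)=N_G(v)$, hence $N_G(v)$ is a clique of $H$ and (as $xy\notin E(G)$) $xy$ is a fill edge. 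I would then invoke the classical description of the minimal triangulations of $G$ as the graphs obtained from $G$ by turning each separator of a maximal family $\mathcal S$ of pairwise non-crossing minimal separators of $G$ into a clique: since $xy$ is a fill edge, some $S\in\mathcal S$ contains $x$ and $y$, and $S$ is a minimal separator of $G$. The key claim is that $v\notin S$: if $v\in S$, then $S\subseteq N_G[v]$ (as $S$ is a clique of $H$ and $N_H(v)=N_G(v)$), so, picking two distinct full components $C_1,C_2$ of $G-S$, the vertex $v\in S$ has a neighbour $x_i\in C_i$ for $i=1,2$; since $N_G(v)$ is a clique of $H$, the non-edge $x_1x_2$ of $G$ is a fill edge, hence $x_1,x_2\in S'$ for some $S'\in\mathcal S$, and now $S$ and $S'$ cross --- a contradiction. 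Therefore $v\notin S$, so $v$ lies in some component $C_0$ of $G-S$; choosing a full component $C\ne C_0$ (there are at least two full components), the vertices $x$ and $y$ both have a neighbour in $C$, $C$ is connected, and $C$ is disjoint from $N_G[v]$, so a shortest $x,y$-path in $G[\{x,y\}\cup C]$ is induced in $G$, has length at least $2$, and avoids $N_G[v]$ internally; prepending $v$ gives the desired induced cycle.

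\medskip
\noindent\emph{Expected main obstacle.}
All the difficulty is in the second direction, in passing from the chordal graph $H$ back to $G$: a hole created in $H$ by deleting a fill edge need not be a hole of $G$, since its other edges may themselves be fill edges, so chordality of $H$ alone is insufficient --- one genuinely needs the minimality of the triangulation, exploited above through the non-crossing structure of the minimal separators of $G$. A more elementary but bookkeeping-heavier alternative is induction on the number of fill edges: the single-fill-edge case follows by combining the $4$-cycle lemma with the simpliciality of $v$ to locate a common neighbour of $x$ and $y$ outside $N_G[v]$, and the inductive step adds a fill edge back and re-routes the resulting cycle through $G$.
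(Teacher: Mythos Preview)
Your proposal is correct in both directions, but it takes a substantially heavier route than the paper's proof.

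For the forward direction (avoidable $\Rightarrow$ simplicial in some minimal triangulation), the paper simply makes $V\setminus\{v\}$ a clique, obtaining a chordal supergraph $G^*$ of $G$, and then takes \emph{any} minimal triangulation $G'$ of $G$ sandwiched between $G$ and $G^*$. Since no edge of $G^*\setminus G$ is incident with $v$, one has $N_{G'}(v)=N_G(v)$, and a direct contradiction (using avoidability to produce a hole through $v$ in the chordal graph $G'$) shows that $N_G(v)$ is a clique in $G'$. This sidesteps entirely the need to certify minimality edge by edge; in particular it does not rely on the ``single-edge removal'' characterisation of minimal triangulations that your construction implicitly invokes.

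For the backward direction (simplicial in some minimal triangulation $\Rightarrow$ avoidable), the paper again argues from first principles: assuming $v$ is not avoidable, it finds $x,y\in N_G(v)$ in different components of $G-S$ with $S=N_G[v]\setminus\{x,y\}$, observes that $S$ is a clique in the minimal triangulation $G'$, and then deletes \emph{all} fill edges joining distinct components of $G-S$. Because $S$ remains a clique, every induced cycle of the resulting graph lives in a single $S\cup C_i$ and hence is already an induced cycle of $G'$; so the smaller graph is still chordal, contradicting minimality. Your argument via the Parra--Scheffler description of minimal triangulations through maximal families of pairwise non-crossing minimal separators is valid and nicely exhibits the separator $S$ as an honest minimal separator of $G$, but it imports nontrivial external structure theory that the paper's two-paragraph argument avoids.

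In short: your plan works, and the Parra--Scheffler viewpoint is conceptually illuminating, but the paper's proof is shorter and entirely self-contained, using nothing beyond the definition of minimal triangulation and one ``remove a batch of fill edges'' trick.
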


\begin{proof}
First we show that the condition is sufficient. Let $G' = (V,E\cup F)$ be a minimal triangulation of $G$ and let $v\in V$ be a simplicial vertex in $G'$. Suppose for a contradiction that $v$ is not avoidable in $G$. Then, $v$ contains two neighbors, say $x$ and $y$, such that $x$ and $y$ belong to different connected components of the graph $G- S$, where $S= N_G[v]\setminus \{x,y\}$. Since $v$ is simplicial in $G'$, set $S$ is a clique in $G'$. Let $F^*$ be the set of all pairs $\{u,w\}\in F$ such that $u$ and $w$ are in different connected components of the graph $G - S$ and let $G^*$ be the graph $(V,E\cup (F\setminus F^*))$.
As $S$ is a clique in $G^*$, no induced cycle of $G^*$ contains vertices from two different components of $G^*- S$. It follows that every induced cycle in $G^*$ is also an induced cycle in $G'$, and the fact that $G'$ is chordal implies that $G^*$ is chordal. However, since the set $F^*$ is not empty (note that it contains $\{x,y\}$), the fact that $G^*$ is chordal contradicts the assumption that $G' = (V,E\cup F)$ is a minimal triangulation of $G$. This shows that $v$ is avoidable in $G$.

It remains to show that the condition is also necessary. Let $v\in V$ be an avoidable vertex in $G$ and let $G^*$ be the graph obtained from $G$ by adding edges so that the set $V\setminus \{v\}$ becomes a clique. Since $G^*$ is a chordal graph having $G$ as a spanning subgraph, there exists a minimal triangulation $G' = (V,E\cup F)$ of $G$ such that $E\cup F\subseteq E(G^*)$. We claim that $v$ is a simplicial vertex in $G'$.
As $N_{G'}(v) = N_{G^*}(v) = N_G(v)$, it suffices to show that any two distinct vertices in $N_G(v)$ are adjacent in $G'$. Consider two distinct vertices $x,y$ in $N_G(v)$ and suppose for a contradiction that they are not adjacent in $G'$.
Then they are also not adjacent in $G$. Let $S = N_G[v]\setminus \{x,y\}$.
Since $v$ is avoidable in $G$, vertices $x$ and $y$ are in the same component of
the graph $G-S$. As $N_{G'}(v) = N_G(v)$, we have $N_G[v]\setminus \{x,y\} = N_{G'}[v]\setminus \{x,y\}$, which implies that $x$ and $y$ are in the same component of the graph $G'-S$.
But now, any shortest $x,y$-path $P$ in $G'-S$ can be combined with edges $yv$ and $vx$ into an induced cycle of length at least four in the graph $G'$, contradicting the fact that $G'$ is chordal.
\end{proof}

\begin{remark}
Sets of vertices of a graph $G$ that are maximal cliques in some minimal triangulation of $G$ were studied in the literature under the name \emph{potential maximal cliques}. This concept was introduced by
Bouchitt{\'e} and Todinca in~\cite{MR1857397} and has already found
many applications in algorithmic graph theory (see, e.g.,~\cite{MR1896345,MR2853937,MR3311877}). In this terminology, \Cref{thm:characterization-avoidable} states that given a vertex $v\in V(G)$, its closed neighborhood $N_G[v]$ is a potential maximal clique in $G$ if and only if $v$ is avoidable.
\end{remark}

Since every graph has a minimal triangulation, Theorems~\ref{thm0} and \ref{thm:characterization-avoidable} imply \Cref{thm:avoidable}. An application of \Cref{thm:avoidable} to vertex-transitive graphs will be given in \Cref{sec:symmetric}.

We now discuss the consequence of \Cref{thm:avoidable} when the theorem is applied to the line graph of a given graph. An edge $e$ in a graph $ G $ is said to be \emph{pseudo-avoidable} if the corresponding vertex in the line graph $ L(G) $ is avoidable. It is not difficult to see that an edge $e$ in a graph $G$ is pseudo-avoidable if and only if any (not necessarily induced) $3$-edge path having $e$ as the middle edge closes to a (not necessarily induced) cycle in $G$. Note that the concepts of avoidable edges (considered as induced $P_2$s, in the sense of \Cref{def:avoidable-paths}) and of pseudo-avoidable edges are incomparable, see Fig.~\ref{fig2}. Assuming notation from Fig.~\ref{fig2}, one can see that $ e $ is not an avoidable edge in $ G $. However, it is pseudo-avoidable, as $ e $ is an avoidable vertex in $ L(G) $. On the other hand, $ f $ is avoidable (even simplicial) in $ G $. However, it is not pseudo-avoidable.

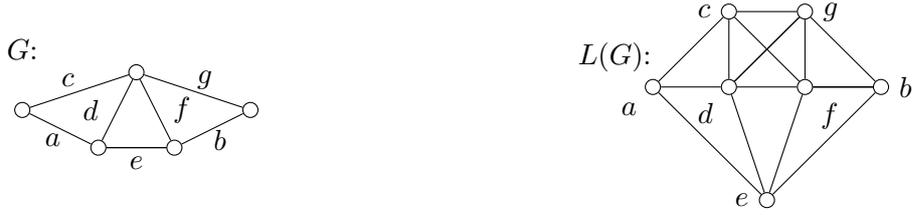
\begin{figure}[ht!]
    \begin{minipage}{0.49\textwidth}
        \centering
        \begin{tikzpicture}[vertex/.style={inner sep=2pt,draw,circle},auto]

        \node[](0) at (0,0.8) {$ G $:};

        \node[vertex](1) at (0,0) {};
        \node[vertex](2) at (1,-0.5) {};
        \node[vertex](3) at (2,-0.5) {};
        \node[vertex](4) at (3,0) {};
        \node[vertex](5) at (1.5,0.5) {};


        \node[](6) at (0.4,-0.4) {$ a $};
        \node[](7) at (2.6,-0.4) {$ b $};
        \node[](8) at (0.6,0.4) {$ c $};
        \node[](9) at (0.9,0) {$ d $};
        \node[](10) at (1.5,-0.7) {$ e $};
        \node[](11) at (2.1,0) {$ f $};
        \node[](12) at (2.4,0.4) {$ g $};

        \draw[] (1)--(2)--(3)--(4)--(5)--(3)--(2)--(5)--(1);

        \end{tikzpicture}
    \end{minipage}
    \hfill
    \begin{minipage}{0.49\textwidth}
        \centering
        \begin{tikzpicture}[vertex/.style={inner sep=2pt,draw,circle},auto]

        \node[](0) at (-0.5,0.4) {$ L(G) $:};

        \node[vertex,label=below left:$ a $](1) at (0,0) {};
        \node[vertex,label=below left:$ d $](2) at (1,0) {};
        \node[vertex,label=below right:$ f $](3) at (2,0) {};
        \node[vertex, label=right:$ b $](4) at (3,0) {};
        \node[vertex, label=right:$ g $](5) at (2,1) {};
        \node[vertex, label=left:$ c $](6) at (1,1) {};
        \node[vertex, label=left:$ e $](7) at (1.5,-1.5) {};

        \draw[] (1)--(2)--(3)--(4)--(7)--(1)--(6)--(2)--(5)--(6)--(3)--(7)--(2)--(5)--(4)--(3)--(5);

        \end{tikzpicture}
    \end{minipage}
    \caption{A graph $ G $ and its line graph $ L(G) $.}\label{fig2}
\end{figure}

Applying \Cref{thm:avoidable} to the line graph of the given graph
yields the following.

\begin{corollary}\label{thm:pseudo-avoidable-edges}
Every graph with an edge has a pseudo-avoidable edge.
\end{corollary}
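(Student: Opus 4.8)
The plan is to derive Corollary~\ref{thm:pseudo-avoidable-edges} directly from \Cref{thm:avoidable} by applying it to the line graph. The only genuine content to check is that the line graph of a graph with an edge has a vertex, which is immediate, and then to translate the conclusion back to the original graph via the definition of pseudo-avoidability.

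Concretely, first I would note that if $G$ has an edge $e$, then $L(G)$ is a graph with at least one vertex, so by \Cref{thm:avoidable} it has an avoidable vertex; call it $e$ again (it corresponds to an edge of $G$). By definition, an edge of $G$ that is an avoidable vertex of $L(G)$ is exactly a pseudo-avoidable edge of $G$. This already proves the corollary, but since the excerpt introduces the combinatorial characterization of pseudo-avoidable edges (``$e$ is pseudo-avoidable iff every $3$-edge path with middle edge $e$ closes to a cycle''), I would also include a sentence recalling why that characterization holds, so the statement is self-contained. For that one observes that in $L(G)$ the neighbors of the vertex $e = xy$ are precisely the edges of $G$ incident with $x$ or with $y$; an induced $P_3$ of $L(G)$ with midpoint $e$ corresponds to a choice of an edge $f$ at one endpoint of $e$ and an edge $g$ at the other endpoint with $f,g$ non-adjacent in $L(G)$, i.e.\ to a (not necessarily induced) $3$-edge path $f\text{--}e\text{--}g$ of $G$; and an induced cycle of $L(G)$ through $f,e,g$ corresponds (after possibly contracting back chords) to a closed walk, hence a cycle, of $G$ containing the path $f\text{--}e\text{--}g$. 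I would keep this translation brief, as it is routine and was already asserted in the text.

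Since the result is essentially a one-line corollary, there is no real obstacle; the only thing requiring a small amount of care is the direction of the line-graph correspondence between induced $P_3$'s / induced cycles in $L(G)$ and arbitrary $3$-edge paths / cycles in $G$ — in particular that an \emph{induced} cycle of $L(G)$ need not correspond to an \emph{induced} cycle of $G$, which is exactly why the notion obtained is ``pseudo-avoidable'' rather than ``avoidable.'' I would therefore phrase the proof so as to invoke only the already-stated equivalence and \Cref{thm:avoidable}, rather than re-deriving the line-graph dictionary from scratch. A proof of roughly the following form would suffice: if $G$ has an edge, then $L(G)$ has a vertex, so by \Cref{thm:avoidable} the graph $L(G)$ has an avoidable vertex, and such a vertex is by definition a pseudo-avoidable edge of $G$.
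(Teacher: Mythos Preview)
Your proposal is correct and follows exactly the paper's approach: the paper simply states that applying \Cref{thm:avoidable} to the line graph of the given graph yields the corollary, which is precisely your argument. Your additional remarks on the line-graph dictionary are fine but unnecessary, since the paper treats this as a one-line consequence of the definition of pseudo-avoidability.
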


An application of \Cref{thm:pseudo-avoidable-edges} to edge-transitive graphs will be given in \Cref{sec:symmetric}.

\medskip
Recall that \Cref{thm:avoidable} coincides with the statement of \Cref{conj:paths} for the case $k = 1$. We now reprove this statement in a slightly stronger form, using an approach that we will adapt in~\Cref{sec:edges} for the proof of the case $k = 2$ of the conjecture. A vertex in a graph is said to be \emph{universal} if it is adjacent to every other vertex, and \emph{non-universal} otherwise.

\begin{theorem}\label{thm:avoidable-vertices}
For every graph $G$ and every non-universal vertex $v\in V(G)$ there exists an avoidable vertex	$a\in V(G)\setminus N[v]$.
\end{theorem}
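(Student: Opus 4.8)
The plan is to prove the statement by induction on the number of vertices of $G$, choosing a suitable vertex to delete so that an avoidable vertex of a smaller graph, far from $v$, lifts to an avoidable vertex of $G$. Fix a non-universal vertex $v$, and among all vertices not in $N[v]$ pick one, call it $w$, whose distance to $v$ is maximum (such a $w$ exists since $v$ is non-universal, so $\dist_G(v,w)\ge 2$). The natural candidate to recurse on is $G' = G - C$, where $C$ is the connected component of $G - N[v]$ containing $w$ that is ``farthest'' from $v$ — but more precisely I expect the right move is to delete a vertex rather than a whole component: let $w$ be as above and consider $G' = G - w$. If $v$ is still non-universal in $G'$, induction gives an avoidable vertex $a \in V(G') \setminus N_{G'}[v]$; the task is then to argue $a$ is still avoidable in $G$, i.e., that adding $w$ back does not destroy avoidability of $a$. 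This is where the maximality of $\dist_G(v,w)$ is used: any induced $P_3$ centered at $a$ that we need to close to an induced cycle lives ``close to $v$'', while $w$ is far from $v$, so $w$ cannot interfere with the closing path — or if it does, it can be rerouted.

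More concretely, here is the step-by-step outline. (1) Set $d = \ecc_G(v)$ restricted to $V(G)\setminus N[v]$, equivalently the largest distance from $v$ to any vertex outside $N[v]$; pick $w$ with $\dist_G(v,w) = d \ge 2$. (2) Let $G' = G - w$. Check that $v$ is non-universal in $G'$ and that $V(G')\setminus N_{G'}[v]$ is nonempty — this needs a small argument (if $w$ were the unique vertex outside $N[v]$, handle that base case directly: then $N[v] = V(G)\setminus\{w\}$, and one shows $w$ itself, or an appropriate neighbor, works, perhaps by a further induction inside $N(v)$). (3) By induction, obtain $a \in V(G')\setminus N_{G'}[v]$ avoidable in $G'$; note $a \ne w$ and $a\notin N_G[v]$. (4) Show $a$ is avoidable in $G$: take neighbors $x,y$ of $a$ in $G$ with $x$-$a$-$y$ an induced $P_3$; if neither $x$ nor $y$ is $w$, this $P_3$ exists in $G'$ and closes to an induced cycle there, which is still induced in $G$ unless $w$ has chords to it — but then use that cycle's vertices plus $w$; if one of $x,y$ equals $w$, use the distance bound to produce the closing path avoiding $a$ and $N[a]$. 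The key inequality will be $\dist_G(v, a) \le d$ combined with $\dist_G(v,w) = d$ to control where interfering vertices can lie.

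The main obstacle I anticipate is step (4), specifically the case where the new vertex $w$ is adjacent to $a$ (so $w \in N_G(a)$) and participates in an induced $P_3$ centered at $a$ that has no analogue in $G'$: then the inductive hypothesis gives nothing directly. To handle this I would fall back on a more careful choice of what to delete — instead of deleting a single vertex $w$, delete an entire connected component $C$ of $G - N[v]$ that is at maximum distance, or that is ``endmost'' in the block-tree sense, so that no vertex of $C$ is needed to close any $P_3$ centered at a surviving avoidable vertex. Alternatively, and probably cleaner, mirror whatever argument the authors will use for the edge case: run a graph-search-free argument directly, peeling off vertices of $N(v)$ that are ``LBFS-last'' to build up the avoidable vertex explicitly, using \Cref{thm:characterization-avoidable} to reduce to exhibiting a minimal triangulation in which some non-neighbor of $v$ is simplicial — one can triangulate $G - (\text{non-neighborhood of }v)$-side aggressively and keep the far component's minimal triangulation, whose simplicial vertex (guaranteed by Dirac, \Cref{thm0}) lies outside $N[v]$ by the distance choice. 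I expect the triangulation-based route via \Cref{thm:characterization-avoidable} to be the smoothest: build a triangulation of $G$ that is the union of a minimal triangulation of $G[N[v] \cup (\text{near components})]$ and a minimal triangulation of the far component $C$, make it minimal, and observe that a simplicial vertex of the $C$-part survives and avoids $N[v]$.
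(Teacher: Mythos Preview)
Your proposal has genuine gaps that the suggested fallbacks do not close. The single-vertex deletion idea founders exactly where you flag it: if the avoidable vertex $a$ of $G-w$ happens to be adjacent to $w$, the inductive hypothesis says nothing about the new $P_3$'s through $w$, and the distance maximality $\dist_G(v,w)=d$ gives no leverage, since $a$ can sit at distance $d-1$ with $w$ as a neighbor. Your component-deletion fallback (remove a whole component $C$ of $G-N[v]$) does work cleanly when $G-N[v]$ has at least two components, because then an avoidable vertex of $G-C$ outside $N[v]$ lies in some other component $C'$ and has no neighbors in $C$ at all; but it is useless in the main case where $G-N[v]$ is connected, and you offer no argument there. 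The triangulation fallback is not a construction: the ``union'' of minimal triangulations of two overlapping pieces need not be chordal (holes can cross the interface), and even if you pass to a minimal triangulation of $G$, vertex $v$ may become universal there, so Dirac's theorem does not hand you a simplicial vertex outside $N_G[v]$.

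The paper's proof takes a route you did not anticipate. It works with a minimal counterexample (fewest vertices, then most edges), fixes the offending $v$, and picks an \emph{arbitrary} $b\notin N[v]$. Since $b$ is not avoidable, some induced $P_3$ $x\!-\!b\!-\!y$ fails to close, which forces a separator inside $N[b]$: one takes a minimal $S\subseteq N(b)$ with $G-(S\cup\{b\})$ disconnected, and lets $C$ be the component \emph{containing $v$} (not a far component). If $b$ is non-universal in $G-C$, minimality yields an avoidable $a$ in $G-C$ outside $N[b]$; such $a$ lies in a component of $G-(S\cup\{b\})$ other than $C$, hence has no neighbors in $C$, so avoidability transfers to $G$. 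If $b$ is universal in $G-C$, the paper replaces $C$ by a single fresh vertex $d$ adjacent to $S\cup\{b\}$; the resulting graph $G'$ is strictly smaller or has strictly more edges, $d$ is non-universal, and the avoidable $a\notin N_{G'}[d]$ produced by induction lifts to $G$ by substituting, for any occurrence of $d$ on a closing cycle, an induced path through $C$ between its two neighbors in $S$ (which exists by minimality of $S$). The two ideas you are missing are: separate via the neighborhood of a \emph{non-avoidable} vertex rather than via distance from $v$, and when the remaining side is too tame, contract the $v$-side to a single dummy vertex so the extremal assumption still bites.
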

\begin{proof}
Suppose that the theorem is false and take a counterexample $G$ with the smallest possible number of vertices and, subject to that, with the largest possible number of edges. The minimality of $|V(G)|$ implies that $G$ is connected.	Since $G$ is a counterexample, it has a non-universal vertex $v\in V(G)$ such that no vertex not adjacent to $v$ is avoidable.

Let $b\in V(G)\setminus N[v]$. Since $b$ is not avoidable in $G$, it is the midpoint of an induced $P_3$, say $x-b-y$, that is not contained in any induced cycle. Observe that vertices $x$ and $y$ are in different components of the graph $G- (N[b]\setminus\{x,y\})$, since any induced path $P$ from $x$ to $y$ in $G - (N[b]\setminus\{x,y\})$ would imply the existence of an induced cycle $x-b-y-P-x$ closing the $3$-vertex path $x-b-y$. It follows that the graph $G - (N[b]\setminus\{x,y\})$ is disconnected. In particular, there exists an inclusion-minimal subset $S$ of $N(b)$ such that $G -  (S\cup \{b\})$ is disconnected. Note that $v\not\in N[b]$, and, hence, $v$ is a vertex of $G -  (S\cup \{b\})$.	Let $C$ be the component of $G -  (S\cup \{b\})$ containing $v$. It follows from the minimality of $S$ that every vertex in $S$ has a neighbor in $C$.
	
Suppose first that $b$ is not universal in $G -  C$. The minimality of $|V(G)|$ implies that $G -  C$ is not a counterexample
to the theorem. Therefore, there exists an avoidable vertex $a$ in the graph $G -  C$ that is neither equal nor adjacent to $b$.
Since $a$ is a vertex of $G -  (S\cup \{b\})$ not contained in $C$, it is not adjacent to $v$. We claim that $a$ is also avoidable in $G$, which will contradict the assumption that no vertex not adjacent to $v$ in $G$ is avoidable. Let $P$ be an induced $P_3$ in $G$ with midpoint $a$. Since $a$ belongs to a component of $G -  (S\cup \{b\})$ different from $C$, no vertex of $C$ is adjacent to $a$. Therefore, $P$ is an induced $P_3$ in the graph $G -  C$. Since $a$ is avoidable in $G -  C$, there exists an induced cycle in $G -  C$ containing $P$.	Since $G -  C$ is an induced subgraph of $G$, we conclude that there exists an induced cycle in $G$ containing $P$. Since $P$ was arbitrary, it follows that $a$ is avoidable in $G$; a contradiction.
	
Therefore, we may assume that $b$ is universal in $G -  C$. Let $G'$ be the graph obtained from $G -  C$ by adding a new vertex $d$ and making $d$ adjacent to every vertex in $S\cup \{b\}$. Note that if $C\neq \{v\}$, then $G'$ has strictly fewer vertices than $G$. If, on the other hand, $C = \{v\}$, then $G'$ has the same number of vertices as $G$ but strictly more edges, since $N_G(C)\subseteq S$, while $N_{G'}(d) = S\cup \{b\}$. Denoting by $C'$ any component of $G -  (S\cup \{b\})$ other than $C$, we see that every vertex of $C'$ is not adjacent to $d$ in $G'$. Hence, $d$ is not universal in $G'$ and the choice of $G$ implies that $G'$ has an avoidable vertex $a$ that is not $d$ and not adjacent to $d$. This shows that $a\in V(G)\setminus (C\cup S \cup \{b\})$.
	
We claim that $a$ is avoidable in $G$. Suppose that $x,y\in N_G(v)$ are not adjacent. We need to show that the path $x-a-y$ closes to an induced cycle. We have $x,y\in N_{G'}(a)\setminus\{b\}$. Since $a$ is avoidable in $G'$, there exists an induced path $P$ from $x$ to $y$ in $G'$	such that $a$ has no neighbors in $V(P)\setminus\{x,y\}$.	If $d\not\in V(P)$, then $a-x-P-y-a$ is the required cycle in $G$. Therefore, we may assume that $d\in V(P)$. Observe that $d\neq x,y$. Let $p$ and $q$ be the two neighbors of $d$ in $V(P)$.	Then $p,q\in V(G)$. Since $b$ is universal in $G'$ and $b\neq a$, it follows that $b\not\in V(P)$. Since $p$ and $q$ are adjacent to $d$ and $b\not\in V(P)$, it follows that $p,q\in S$. Moreover, notice that $V(P)\setminus \{p,q\}$ is disjoint from $S\cup \{b\}$. By the minimality of $S$ and, since $C$ is connected, there is an induced path $Q$ in $G$ from $p$ to $q$ such that $V(Q)\setminus \{p,q\}\subseteq C$. However, in this case $a-x-P-p-Q-q-P-y-a$ is the required cycle in $G$. This shows that $a$ is an avoidable vertex in $G$ not adjacent to $v$. This contradicts the assumption on $v$ and completes the proof of the theorem.
\end{proof}

	\section{Computing Avoidable Vertices}\label{sec:faster-computation}


Knowing that every graph has an avoidable vertex, the next question is how to compute one efficiently. The obvious polynomial-time method would be to decide for each vertex $ v $ of the graph $ G $ whether it is avoidable. For this we have to check for each pair of nonadjacent neighbors $ x $ and $ y $ of $ v $, whether they are in the same connected component of $ (G - N[v]) \cup \{x,y\} $. If we use a breadth first search or depth first search to compute the connected components, this gives a running time of $ \mathcal{O}(|V(G)| {|E(\overline{G})|} (|V(G)|+|E(G)|))$. The same method can be used to compute the set of all avoidable vertices. However, if we are only interested in computing one or two avoidable vertices, we show next that this can be done in linear time.

We have already seen that in chordal graphs the avoidable vertices are exactly the same as the simplicial vertices. Therefore, any graph search algorithm that can compute simplicial vertices in a chordal graph is a good candidate for computing avoidable vertices.

\begin{sloppypar}
In 1976,~\citet{rose1976algorithmic} defined a linear-time algorithm (Lex-P), which computes a perfect elimination ordering if there is one, and is thus a recognition algorithm for chordal graphs. This algorithm, since named \emph{Lexicographic Breadth First Search} (LBFS), exhibits many interesting structural properties and has been used as an ingredient in many other  recognition and optimization algorithms on graphs. Any vertex ordering of $G$ that can be produced by LBFS is called an \emph{LBFS ordering} (of $G$).
\end{sloppypar}

\begin{algorithm}
	\KwIn{A connected $n$-vertex graph $G=(V,E)$ and a distinguished vertex $ s \in V $.}
	\KwOut{A vertex ordering $ \sigma $.}
	\vspace{0.25cm}	
		$ label(s) \leftarrow \{n+1\} $\;
		
		\lForEach{vertex $v \in V\setminus\{s\}$}{assign to $ v $ the empty label}
		
		\For{$ i \leftarrow 1 $ \KwTo $ n $}{pick an unnumbered vertex $ v $ with lexicographically largest label\;
			$ \sigma(i) \leftarrow v$\;
			\lForEach{unnumbered vertex $ w \in N(v) $}{append $ (n-i) $ to $ label(w) $}}
		\vspace{0.25cm}
	\caption{Lexicographic Breadth First Search}
	\label{lbfs}
\end{algorithm}

The pseudocode of Lexicographic Breadth First Search given above is presented for connected graphs. However, the method can be generalized to work for arbitrary graphs by executing the search component after component (in an arbitrary order) and concatenating the resulting vertex orderings.

In this context, we will be mainly interested in the properties of the vertices of a given graph $G$ visited \emph{last} by some execution of LBFS, also called \emph{end vertices}. The essential claim of the following lemma can be found in many papers, for example in~\cite{berry1998separability}.

\begin{lemma}[\citet{aboulker2015vertex}]\label{lemma2}
Let $ G=(V,E) $ be a graph and let $ \sigma = (v_1, \ldots , v_n) $ be an LBFS ordering of $G$. Then for all triples of vertices $ a,b,c \in V $ such that $ a \prec_{\sigma} b \prec_{\sigma} c $ and $ ac \in E $, there exists a path from $ a $ to $ b $ whose internal vertices are disjoint from $ N[v_n]$.
\end{lemma}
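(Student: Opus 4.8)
Write $S=N[v_n]$; the assertion is equivalent to saying that, for every triple with $a\prec_\sigma b\prec_\sigma c$ and $ac\in E$, the vertices $a$ and $b$ lie in a common connected component of $G-(S\setminus\{a,b\})$. If $ab\in E$ this is trivial, the edge $ab$ being the required path, so assume henceforth that $ab\notin E$.

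The plan is to argue by induction on $\mathrm{pos}_\sigma(a)$, the index of $a$ in $\sigma$ — equivalently, to examine a counterexample with $\mathrm{pos}_\sigma(a)$ as small as possible — and to invoke the well-known four-vertex characterization of LBFS orderings: whenever $a\prec_\sigma b\prec_\sigma c$ with $ac\in E$ and $ab\notin E$, some vertex $d$ satisfies $d\prec_\sigma a$, $db\in E$ and $dc\notin E$. Applying this to our triple, $(d,a,b)$ is again an admissible triple whose first vertex is $\sigma$-earlier than $a$, so the induction hypothesis supplies a $d,a$-path $Q$ whose interior avoids $S$.

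If $d\notin S$, then $a-Q-d-b$ is an $a,b$-walk all of whose internal vertices (those of $Q$, plus $d$) lie outside $S$; pruning it to a path settles this case. The delicate case is $d\in S$. Since $d\prec_\sigma a$ we have $d\neq v_n$, hence $dv_n\in E$, so $(d,b,v_n)$ is admissible as well ($b\prec_\sigma v_n$ because $b\prec_\sigma c$ and $v_n$ is $\sigma$-last), and the induction hypothesis also yields a $d,b$-path $R$ whose interior avoids $S$. Gluing $Q$ and $R$ at $d$ and following $Q$ from $a$ to the last vertex it shares with $R$ produces an $a,b$-path $P^{\ast}$ whose interior meets $S$ in at most the single vertex $d$, and only in the subcase where $Q$ and $R$ intersect solely in $d$. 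It then remains to reroute $P^{\ast}$ locally around $d$: writing $p,q$ for the two neighbours of $d$ on $P^{\ast}$ (both outside $S$), one must exhibit a $p,q$-path that avoids $S$ internally and is internally disjoint from $P^{\ast}-d$. This rerouting step is the part I expect to be the main obstacle, precisely because the primary induction parameter $\mathrm{pos}_\sigma(a)$ need not decrease there.

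To carry out the rerouting I would set up a secondary induction, e.g.\ on $\mathrm{pos}_\sigma(d)$, using that $d$ is adjacent to the $\sigma$-last vertex $v_n$ as well as to both $p$ and $q$: a renewed application of the four-vertex condition to a suitable triple among $\{p,q,v_n\}$ (with the edge $dv_n$ playing the role of $ac$) should deliver a $\sigma$-predecessor of $d$ lying outside $S$, from which the detour can be assembled. I would pair this with the elementary observation that $S=N[v_n]$ is contained in the last two levels of the BFS layering of $G$ rooted at $v_1$ (since $v_n$ sits in the final level and $\sigma$ visits the levels in order), so that any vertex in an earlier level is automatically outside $S$; in particular, when $c$ is not among the last two levels the detour $a-\cdots-v_1-\cdots-b$ through the root already works, and only triples in which $a$, $b$, $c$ sit near the top of the layering — exactly where $S$ can intrude — need the argument sketched above.
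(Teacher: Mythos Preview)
The paper does not prove this lemma; it is quoted from Aboulker et al.\ (with a pointer also to Berry--Bordat), so there is no in-paper argument to compare your plan against. I will therefore comment on the plan itself.

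Your induction on $\mathrm{pos}_\sigma(a)$ via the LBFS four-point condition is the standard route, and the case $d\notin S$ is handled correctly. The gap is exactly where you place it, but the proposed repair misfires. You want to reapply the four-point condition to a triple $(d,X,v_n)$ with $X\in\{p,q\}$ and the edge $dv_n$ in the role of $ac$; for that you would need $dX\notin E$, yet $p$ and $q$ are by construction the neighbours of $d$ on $P^\ast$, so the hypothesis of the four-point condition is not met and nothing is produced. The BFS-layer remark at the end does not supply the missing detour either: vertices in the last two layers can still separate $a$ from $b$ even when $a,b,c$ lie deeper.

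The clean fix uses $c$, not $p$ or $q$, as the middle vertex of the second four-point application. From your first application you already know $dc\notin E$; combined with $d\in S$ (hence $dv_n\in E$) this forces $c\neq v_n$, so $d\prec_\sigma c\prec_\sigma v_n$ with $dv_n\in E$ and $dc\notin E$. Applying the four-point condition to $(d,c,v_n)$ yields some $d'\prec_\sigma d$ with $d'c\in E$ and $d'v_n\notin E$; in particular $d'\notin S$. Now the triples $(d',a,c)$ and $(d',b,c)$ are both admissible and have first vertex strictly earlier than $a$, so your \emph{primary} induction already delivers a $d',a$-path and a $d',b$-path whose interiors avoid $S$. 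Splicing them at $d'\notin S$ gives the required $a,b$-walk, hence path. No secondary induction parameter is needed, and the intermediate paths $Q$, $R$ through $d$ can be discarded entirely.
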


\begin{corollary}\label{corollary2}
Let $ G=(V,E) $ be a graph and let $ \sigma = (v_1, \ldots , v_n) $ be an LBFS ordering of $G$. Then $ v_n $ is avoidable in $ G $. In fact, for any $ i \in \{1, \ldots ,n\} $ the vertex $ v_i $ is avoidable in $ G[v_1, \ldots, v_i] $.
\end{corollary}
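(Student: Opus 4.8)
The first statement, that $v_n$ is avoidable in $G$, will follow almost directly from \Cref{lemma2}. The plan is to take any two distinct non-adjacent neighbors $x,y$ of $v_n$ (if no such pair exists then $N(v_n)$ is a clique, so $v_n$ is simplicial and hence avoidable, and there is nothing to prove) and assume, without loss of generality, that $x\prec_\sigma y$. Since $v_n$ is the last vertex of $\sigma$, we have $x\prec_\sigma y\prec_\sigma v_n$ and $xv_n\in E$, so \Cref{lemma2} applied to the triple $(a,b,c)=(x,y,v_n)$ yields an $x,y$-path all of whose internal vertices lie outside $N[v_n]$, i.e.\ avoid $v_n$ and all of its neighbors. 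As $x,y$ were an arbitrary such pair, \Cref{def:avoidable-vertex} gives that $v_n$ is avoidable. (If one prefers the ``closes to an induced cycle'' formulation, I would instead take an induced $x,y$-subpath $P$ of this path inside $G-(N[v_n]\setminus\{x,y\})$; since $x\not\sim y$ the path $P$ has length at least two, and $x-v_n-y-P-x$ is then an induced cycle of length at least four containing the induced $P_3$ $x-v_n-y$.)

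For the ``in fact'' part, I would observe that $v_i$ is the last vertex of the prefix ordering $\sigma_i:=(v_1,\dots,v_i)$, so once we know that $\sigma_i$ is an LBFS ordering of $G_i:=G[\{v_1,\dots,v_i\}]$, the first part applied to the pair $(G_i,\sigma_i)$ immediately gives that $v_i$ is avoidable in $G_i$. Thus everything reduces to the following \emph{prefix property} of LBFS: if $\sigma=(v_1,\dots,v_n)$ is an LBFS ordering of $G$, then $\sigma_i$ is an LBFS ordering of $G_i$ for every $i$. I expect this to be the only real work in the proof.

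To establish the prefix property, I would first reduce to the case of connected $G$: a disconnected graph is handled componentwise, since an LBFS ordering of $G$ is a concatenation of LBFS orderings of its components, and so $\{v_1,\dots,v_i\}$ consists of some whole components of $G$ together with a prefix of one more. When $G$ is connected, every prefix $\{v_1,\dots,v_j\}$ induces a connected subgraph, because LBFS always selects a vertex adjacent to an already-selected one; in particular $G_i$ is connected. I would then run LBFS on $G_i$ with start vertex $v_1$ and prove by induction on $j\le i$ that this run can select $v_1,\dots,v_j$ in this order. The key point is that, having already selected $v_1,\dots,v_{j-1}$ in both runs, the label of an unnumbered vertex $w$ is, in either run, the string obtained by listing --- in order of increasing $t$ --- a strictly decreasing function of $t$ (namely $n-t$ in the $G$-run and $i-t$ in the $G_i$-run) over all $t<j$ with $v_tw\in E$; since adjacency in $G_i$ coincides with adjacency in $G$, and since a lexicographic comparison of two such strings depends only on the comparison of the underlying index sets $T_w=\{t<j:\ v_tw\in E\}$ (the reindexing being monotone, and the ``shorter string is smaller'' rule being the same in both runs), the two runs induce exactly the same preorder on the unnumbered vertices. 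Hence a lexicographically largest label in the $G$-run is also one in the $G_i$-run, so $v_j$ remains an admissible choice; this completes the induction, hence the prefix property, hence the corollary.
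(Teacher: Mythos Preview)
Your argument is correct and matches the paper's intent: the corollary is stated in the paper without proof, as an immediate consequence of \Cref{lemma2}, and your first paragraph is precisely that direct application. For the ``in fact'' part the paper gives no details; your route via the prefix property of LBFS (that every prefix of an LBFS ordering of $G$ is an LBFS ordering of the corresponding induced prefix) is the standard justification, and your sketch of it --- reducing to the connected case and observing that the lexicographic comparison of labels after $j-1$ steps depends only on the index sets $\{t<j: v_tw\in E\}$, which are the same in $G$ and in $G_i$ --- is correct.
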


Note that Lexicographic Breadth First Search is a breadth-first search, that is, when LBFS is executed beginning in a vertex $s$, it orders the vertices of $G$ according to their distance from the starting vertex $s$. In particular, this implies the following strengthening of \Cref{thm:avoidable-vertices}.

\begin{corollary}\label{cor:eccentric}
For every graph $ G =(V,E) $ and every vertex $ v \in V $ there is an avoidable vertex $ a \in V $ that is eccentric to $v$.
\end{corollary}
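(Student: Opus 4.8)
The plan is to obtain $a$ as the last vertex of a Lexicographic Breadth First Search started at $v$, reading off what we need from Corollary~\ref{corollary2} together with the fact, recalled immediately before the statement, that LBFS is a breadth-first search. First I would run Algorithm~\ref{lbfs} with $s = v$ to get an LBFS ordering $\sigma = (v_1, \ldots, v_n)$ of $G$; since $v$ carries the label $\{n+1\}$ while every label ever produced afterwards is a set of integers from $\{0, \ldots, n-1\}$, the vertex $v$ is picked first, so $v_1 = v$. By Corollary~\ref{corollary2} the vertex $v_n$ is avoidable in $G$, so the whole statement reduces to checking that $v_n$ is eccentric to $v$, that is, $\dist_G(v, v_n) = \ecc_G(v)$.

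For that step I would invoke the BFS-layering property of LBFS: a run started at $v$ visits the vertices in non-decreasing order of their distance from $v$, so the set of vertices at distance exactly $\ecc_G(v)$ forms a suffix of $\sigma$, and in particular $\dist_G(v, v_n) = \ecc_G(v)$. I expect this to be the only point that needs care, and it is a mild one: it is standard that LBFS refines BFS, and it can be verified by the usual induction maintaining the invariant that the $i$-th vertex chosen has an already-numbered neighbor at distance $\dist_G(v, v_i) - 1$, the lexicographic tie-break never overriding the ``shorter prefix wins'' comparison that enforces the layering. Everything else is immediate, so there is essentially no combinatorial obstacle here: the substantive work is already packaged in Lemma~\ref{lemma2} and Corollary~\ref{corollary2}.

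Finally I would address a possibly disconnected $G$. The cleanest route is to reduce to the component $C$ of $v$: a vertex of $C$ is avoidable in $G$ if and only if it is avoidable in $G[C]$, because an induced $P_3$ centered at such a vertex, and any induced cycle closing it, stay inside $C$; so carrying out the argument above inside $G[C]$ yields an avoidable vertex of $G$ at maximum distance from $v$ within $C$. Alternatively, if one reads ``eccentric'' as ``at distance $\infty$'' when $G$ is disconnected, then the component-by-component version of LBFS started at $v$ has its last vertex avoidable in $G$ by Corollary~\ref{corollary2} and, as soon as $G$ is disconnected, lying in a component other than $C$, which does the job directly.
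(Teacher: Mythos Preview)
Your proposal is correct and follows essentially the same route the paper takes: the paper does not spell out a proof but states the corollary immediately after observing that LBFS, being a breadth-first search, orders the vertices by distance from the starting vertex, so that the last vertex is both avoidable (by Corollary~\ref{corollary2}) and eccentric to the start. Your additional discussion of the disconnected case is more careful than the paper's treatment; note that your first alternative (restricting to the component of $v$) only yields a vertex eccentric to $v$ within that component, so for disconnected $G$ the second alternative is the one that actually matches the statement.
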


This corollary generalizes the fact that for every vertex $v$ in a chordal graph $G$, there is a simplicial vertex in $G$ that is eccentric to $v$~\cite{MR763692,MR844046}. Moreover, with \Cref{cor:eccentric} at hand we can strengthen \Cref{thm:avoidable} to the following generalization of Dirac's theorem on chordal graphs (\Cref{thm0}).

\begin{theorem}\label{thm:two-avoidable}
Every graph $ G=(V,E) $ with at least two vertices contains two avoidable vertices whose distance to each other is the diameter of $ G $.
\end{theorem}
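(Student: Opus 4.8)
The plan is to obtain both required vertices by invoking \Cref{cor:eccentric} twice, once to reach a vertex realizing the diameter and once more to close the loop. Write $d=\diam(G)$ and recall that $d=\max\{\ecc_G(v)\mid v\in V\}$; since $|V|\ge 2$ we have $d\ge 1$ (with the usual convention $d=\infty$ when $G$ is disconnected). The first step is to fix a vertex $s\in V$ with $\ecc_G(s)=d$, which exists by the displayed description of the diameter; one may think of $s$ as one endpoint of a diametral pair.

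The second step is to apply \Cref{cor:eccentric} to $s$: it produces an avoidable vertex $a$ that is eccentric to $s$, i.e.\ $\dist_G(s,a)=\ecc_G(s)=d$. The key bookkeeping observation is then $d=\dist_G(a,s)\le \ecc_G(a)\le \diam(G)=d$, so $\ecc_G(a)=d$ as well. The third step is to apply \Cref{cor:eccentric} a second time, now to the vertex $a$: this yields an avoidable vertex $b$ with $\dist_G(a,b)=\ecc_G(a)=d$. Since $a$ and $b$ are both avoidable, $\dist_G(a,b)=d=\diam(G)$, and $a\ne b$ because $d\ge 1$, this gives the desired diametral pair of avoidable vertices.

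Because \Cref{cor:eccentric} does all the heavy lifting, the argument is essentially a two-line deduction and I do not expect a genuine obstacle. The one point that deserves care, and that I would flag explicitly in the write-up, is that the second application of \Cref{cor:eccentric} must be to $a$ rather than to $s$: this is exactly what upgrades "there is an avoidable vertex far from $s$" to "there are two avoidable vertices at distance \emph{exactly} $\diam(G)$", and it explains why a naive double-sweep LBFS from an arbitrary start vertex would only yield a lower bound on the diameter. A secondary, minor point is to confirm that the conventions for $\ecc_G$ and $\diam$ behave as expected when $G$ is disconnected, in which case $d=\infty$, the vertices $a$ and $b$ land in distinct components, and the statement still reads correctly as $\dist_G(a,b)=\infty=\diam(G)$.
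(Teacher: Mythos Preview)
Your proof is correct and follows essentially the same approach as the paper: pick a vertex $s$ of maximum eccentricity, obtain an avoidable vertex $a$ eccentric to $s$, and then an avoidable vertex $b$ eccentric to $a$, concluding via the chain $\dist_G(a,b)=\ecc_G(a)=\ecc_G(s)=\diam(G)$. The only cosmetic difference is that the paper invokes \Cref{corollary2} together with the BFS layering property of LBFS directly, whereas you invoke the packaged statement \Cref{cor:eccentric}; the underlying argument is identical.
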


\begin{proof}
Let $s\in V$ be a vertex of maximum eccentricity in $G$ and let $\sigma = (s=v_1, \ldots ,v_n = a)$ be the ordering given by an LBFS starting in $ s $. By \Cref{corollary2}, vertex $ a $ is avoidable. On the other hand, if $ \tau = (a = w_1, \ldots , w_n = b) $ is an LBFS of $ G $ starting in $ a $, then $ b $ is avoidable due to Corollary \ref{corollary2}. Moreover, $a\neq b$ and
$\dist_G(a,b) = \ecc_G(a) = \ecc_G(s) =\diam(G)$.
\end{proof}

Since LBFS can be implemented to run in linear time~(see, e.g., \cite{MR2063679}), employing the same approach as in the proof of \Cref{thm:two-avoidable}, except that vertex $s$ is chosen arbitrarily,
we obtain the announced consequence for the computation of avoidable vertices.

\begin{theorem}\label{thm:computation}
Given a graph $G$ with at least two vertices, two distinct avoidable vertices in $G$ can be computed in linear time.
\end{theorem}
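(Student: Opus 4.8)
The plan is to run the same two-pass LBFS procedure that underlies \Cref{thm:two-avoidable}, but to start the first search from an \emph{arbitrary} vertex rather than from one of maximum eccentricity, and then to observe that the two end vertices produced are automatically distinct. The only external ingredient needed is that LBFS can be implemented to run in time $\mathcal{O}(|V(G)|+|E(G)|)$ on any graph --- running the search component by component (in an arbitrary order) in the disconnected case, and concatenating the resulting orderings; see, e.g., \cite{MR2063679}.

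Concretely, I would pick an arbitrary vertex $s\in V(G)$ and execute LBFS from $s$, producing a vertex ordering $\sigma = (s=v_1,\dots,v_n=a)$. By \Cref{corollary2}, the end vertex $a$ is avoidable in $G$. Then I would execute a second LBFS, this time starting from $a$, producing an ordering $\tau = (a=w_1,\dots,w_n=b)$; again by \Cref{corollary2}, the end vertex $b$ is avoidable in $G$. Each of the two searches runs in linear time, so the whole procedure runs in time $\mathcal{O}(|V(G)|+|E(G)|)$, as required.

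It remains to verify that $a$ and $b$ are distinct. This is immediate: in the ordering $\tau$ the vertex $a$ occupies the first position and $b$ the last, and since $G$ has at least two vertices these positions differ, whence $a = w_1 \neq w_n = b$. Note that, in contrast to the proof of \Cref{thm:two-avoidable}, we make no claim about $\dist_G(a,b)$, so there is no need to choose the starting vertex $s$ with maximum eccentricity; this is precisely why an arbitrary choice of $s$ suffices here.

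There is essentially no hard step in this argument. The only points worth a brief remark are (i) that \Cref{corollary2} is being applied to possibly disconnected graphs --- which is legitimate, since any induced $P_3$ with a prescribed midpoint lies entirely within one connected component, so a vertex is avoidable in $G$ if and only if it is avoidable in its own component, and the component-by-component LBFS of $G$ restricts to an LBFS of the component containing the end vertex --- and (ii) that the chosen LBFS implementation attains the stated linear-time bound on the given input representation (e.g.\ adjacency lists). Neither point requires more than a sentence.
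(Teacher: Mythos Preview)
Your proposal is correct and follows essentially the same approach as the paper: run two passes of LBFS, the first from an arbitrary vertex to obtain an avoidable end vertex $a$, the second from $a$ to obtain a distinct avoidable end vertex $b$, with linearity coming from the linear-time LBFS implementation. The paper states exactly this (dropping the maximum-eccentricity requirement from the proof of \Cref{thm:two-avoidable}), and your added remarks on distinctness and the disconnected case are sound.
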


It is important to note that while every LBFS end vertex is avoidable, the converse is not necessarily true. In the graph depicted in Figure \ref{fig1}(a), vertex $ a $ is avoidable, as it is not contained in any $ P_3 $. On the other hand, $ a $ cannot be the end vertex of an LBFS, as it is not of farthest distance from any of the other vertices.

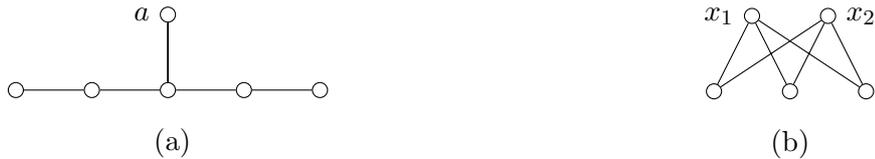
\begin{figure}[ht!]
	\begin{minipage}{0.49\textwidth}
		\centering
		\begin{tikzpicture}[vertex/.style={inner sep=2pt,draw,circle},auto]
		
		\node[vertex](1) at (0,0) {};
		\node[vertex](2) at (1,0) {};	
		\node[vertex](3) at (2,0) {};
		\node[vertex, label=left:$a$](4) at (2,1) {};
		\node[vertex](5) at (3,0) {};
		\node[vertex](6) at (4,0) {};

		\draw[] (1)--(2)--(3)--(4)--(3)--(5)--(6);
		
		\end{tikzpicture}
\medskip

(a)
	\end{minipage}
	\hfill
	\begin{minipage}{0.49\textwidth}
		\centering
		\begin{tikzpicture}[vertex/.style={inner sep=2pt,draw,circle},auto]
		
		\node[vertex, label=left:$x_1$](1) at (0.5,1) {};
		\node[vertex, label=right:$x_2$](2) at (1.5,1) {};	
		\node[vertex](3) at (0,0) {};
		\node[vertex,](4) at (1,0) {};
		\node[vertex](5) at (2,0) {};

		\draw[] (1)--(3)--(2)--(4)--(1)--(5)--(2);
		
		\end{tikzpicture}

\medskip
(b)	
\end{minipage}
	
	\caption{Two graphs having avoidable vertices that cannot be the end vertices of an LBFS, resp.~an MCS.}\label{fig1}
\end{figure}

We can therefore not use LBFS to find all avoidable vertices of a given graph. In fact, while it is possible to check whether a vertex is avoidable in polynomial time (as explained above), it is NP-complete to decide whether an arbitrary vertex of a given graph can be the end vertex of an LBFS \cite{corneil2010end}. This holds true even for restricted graph classes such as bipartite graphs \cite{gorzny2017end} and weakly chordal graphs \cite{corneil2010end}.

\citet{berry2004maximum} give another possibility to compute avoidable vertices in linear time using \emph{maximum cardinality search} (MCS) as defined in Algorithm \ref{mcs}. They show that, analogously to LBFS, the last vertex visited by an MCS is always avoidable (or in their notation an OCF-vertex).

\begin{algorithm}[H]
	\KwIn{Connected graph $G=(V,E)$ and a distinguished vertex $ s \in V $}
	\KwOut{A vertex ordering $ \sigma $}
	\vspace{0.25cm}
			$ \sigma(1) \leftarrow s $\;			
			\For{$ i \leftarrow 2 $ \KwTo $ n $}{pick an unnumbered vertex $ v $ with largest amount of numbered neighbors\;
			$ \sigma(i) \leftarrow v$\;}
		\vspace{0.25cm}
	\caption{Maximum Cardinality Search}
	\label{mcs}
\end{algorithm}

It is easy to see that in the graph given in Figure~\ref{fig1}(a) the set of avoidable vertices is equal to the set of MCS end vertices. However, this is not always the case. In the graph depicted in Figure \ref{fig1}(b), every vertex is avoidable, but neither $ x_1 $ nor $ x_2 $ can be the end vertex of an MCS. Furthermore, just as in the case of LBFS, deciding whether a vertex is an end vertex of MCS is NP-complete~\cite{dmtcs:5572}.

Further analysis on graph searches and avoidable vertices can be found in \cite{berry2010graph}.

	\section{Implications for the Maximum Weight Clique Problem}\label{sec:applications}
	

In this section, we present an application of the concept of avoidable vertices to the maximum weight clique problem: given a graph $G = (V,E)$ with a vertex weight function $w:V\to \mathbb{R}_+$, find a clique in $G$ of maximum total weight, where the weight of a set $S\subseteq V$ is defined as $w(S):= \sum_{x\in S}w(x)$. We will show that this problem, which is generally NP-hard, is solvable in polynomial time in the class of $1$-perfectly orientable graphs, and even more generally in the class of hole-cyclically orientable graphs. The importance of these two graph classes, the definitions of which will be given shortly, is due to the fact that they form a common generalization of two well studied graph classes, the chordal graphs and the circular-arc graphs.

The link between avoidable vertices and the classes of $1$-perfectly orientable or hole-cyclically orientable graphs will be given by considering particular orientations of the input graph. Many important graph classes, like chordal graphs, comparability graphs, or proper circular-arc graphs (see, e.g.,~\cite{MR666799}), can be defined with the existence of a particular kind of orientation on the edges. One such class is the class of \emph{cyclically orientable graphs}, first introduced by Barot et al.~\cite{barot2006cluster}. This is the class of graphs that admit an orientation such that every chordless cycle is oriented cyclically.
If we allow triangles to be oriented arbitrarily, while all other chordless cycles must be oriented cyclically, we obtain the class of hole-cyclically orientable graphs. More formally, we say that a \emph{hole} in a graph is a chordless cycle of length at least four, that an orientation $D$ of a graph $G$ is \emph{hole-cyclic} if all holes of $G$ are oriented cyclically in $D$, and that a graph is \emph{hole-cyclically orientable} if it admits a hole-cyclic orientation.

\begin{sloppypar}
While the class of hole-cyclically orientable graphs does not seem to have been studied in the literature, it generalizes the previously studied class of $1$-perfectly orientable graphs, defined as follows. We say that an orientation of a graph is an \emph{out-tournament}, or \emph{$1$-perfect}~\cite{kammer2014approximation}, if the out-neighborhood of every vertex induces a tournament. Similarly, we call a digraph an \emph{in-tournament}~\cite{bangjensen1993tournament}, or \emph{fraternal}~\cite{urrutia1992algorithm}, if the in-neighborhood of every vertex induces a tournament. A graph is said to be \emph{$1$-perfectly orientable} if it admits a $1$-perfect orientation. Using a simple arc reversal argument, it is easy to see that the existence of a $1$-perfect orientation implies a fraternal orientation and vice versa.

The class of $1$-perfectly orientable graphs forms a common generalization of the classes of chordal graphs and of circular-arc graphs~\cite{MR666799,urrutia1992algorithm}. While $1$-perfectly orientable graphs can be recognized in polynomial time via a reduction to a 2-SAT~\cite{bangjensen1993tournament}, their structure is not understood (except for some special cases, see~\cite{bangjensen1993tournament,MR3647815,
MR3853110,MR3634143}) and the complexity of many classical optimization problems such as maximum clique, maximum independent set, or $k$-coloring for fixed $k\ge 3$ is still open for this class of graphs.

In this section, we show that the maximum weight clique problem is solvable in polynomial time in the class of $1$-perfectly orientable graphs.
Moreover, we do this even in the more general setting
of hole-cyclically orientable graphs. The fact that every $1$-perfectly orientable graph is hole-cyclically orientable is a consequence of the following simple lemma (see, e.g., \cite{MR3647815}).
\end{sloppypar}

\begin{lemma}\label{lem:1-p.o.}
Every $1$-perfect orientation of a graph $G$ is hole-cyclic.
\end{lemma}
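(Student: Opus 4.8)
The plan is to argue by contradiction. Suppose $D$ is a $1$-perfect orientation of a graph $G$ and that $C = v_1 - v_2 - \cdots - v_k - v_1$ is a hole of $G$ (so $k \ge 4$ and $C$ is chordless) that is \emph{not} oriented cyclically in $D$; all indices below are read modulo $k$. First I would record the elementary observation that an orientation of a cycle which is not a directed cycle must contain a vertex that is a \emph{source along the cycle}, i.e., a vertex $v_i$ such that both cycle-edges $v_{i-1}v_i$ and $v_iv_{i+1}$ are oriented away from $v_i$ in $D$. Indeed, if every vertex of $C$ had at most one out-neighbor among its two cycle-neighbors, then summing these restricted out-degrees over the $k$ vertices of $C$ would give at most $k$; but this sum equals the number of cycle-edges, which is exactly $k$, so every vertex would have exactly one such out-neighbor, making $C$ a directed cycle, contrary to assumption.

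Next I would invoke the defining property of a $1$-perfect orientation at the source $v_i$. Since $v_{i-1}, v_{i+1} \in N^+_D(v_i)$ and $N^+_D(v_i)$ induces a tournament, the vertices $v_{i-1}$ and $v_{i+1}$ are adjacent in $G$. Because $k \ge 4$, the vertices $v_{i-1}$ and $v_{i+1}$ are distinct and are not consecutive on $C$, so the edge $v_{i-1}v_{i+1}$ is a chord of $C$. This contradicts the assumption that $C$ is a hole. Hence every hole of $G$ is oriented cyclically in $D$, i.e., $D$ is hole-cyclic.

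I do not expect any genuine obstacle here; the argument is short. The only points deserving a moment's care are purely bookkeeping: handling the indices modulo $k$, and checking that the hypothesis $k \ge 4$ is used exactly where needed, namely to guarantee that the edge $v_{i-1}v_{i+1}$ is a genuine chord rather than a cycle-edge. (For $k = 3$ the two out-neighbors of a source would be consecutive on the triangle and no contradiction would arise, which is precisely why triangles are allowed to be oriented arbitrarily in the definition of a hole-cyclic orientation.)
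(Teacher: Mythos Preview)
Your argument is correct. The paper does not actually give a proof of this lemma; it merely states it and cites \cite{MR3647815}. Your proof by contradiction---locating a vertex on the hole whose two cycle-neighbours are both out-neighbours, and then using the $1$-perfect property to force a chord---is exactly the standard short argument one would expect, and the counting observation you use to guarantee such a source vertex exists is clean and correct. There is nothing to add.
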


On the other hand, not every hole-cyclically  orientable graph is $1$-perfectly orientable, as can be seen from Figure \ref{fig4}.

\begin{figure}[ht!]
	\centering
	\begin{tikzpicture}[vertex/.style={inner sep=2pt,draw,circle},auto]
	
	\node[vertex](1) at (0,0) {};
	\node[vertex](2) at (0,1) {};	
	\node[vertex](3) at (1,0) {};
	\node[vertex](4) at (1,1) {};
	\node[vertex](5) at (2,0) {};
	\node[vertex](6) at (2,1) {};

	\draw[->](3)--(5);
	\draw[->](5)--(6);
	\draw[->](6)--(4);
	\draw[->](2)--(4);
	\draw[->](1)--(2);
	\draw[->](3)--(1);
	\draw[->](4)--(3);
		
	\end{tikzpicture}
	\caption{A graph with a hole-cyclic orientation that is not $1$-perfectly orientable.}
	\label{fig4}
\end{figure}
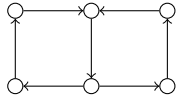

\begin{sloppypar}
Our algorithm for the maximum weight clique problem in the class of hole-cyclically orientable graphs will be based on the fact that the classes of $1$-perfectly orientable and hole-cyclically orientable graphs coincide within the class of cobipartite graphs, where they also coincide with circular-arc graphs. The equivalence between properties 1, 3~and 4 in the lemma below was already observed in~\cite{MR3647815}. Due to \Cref{lem:1-p.o.}, the list can be trivially extended with the hole-cyclically orientable property.
\end{sloppypar}

\begin{lemma}\label{lemma5}
For every cobipartite graph $ G $, the following properties are equivalent:
	\begin{enumerate}
		\item $ G $ is $1$-perfectly orientable.
		\item $ G $ is hole-cyclically orientable.
		\item $ G $ has an orientation in which every induced $4$-cycle is oriented cyclically.
		\item $ G $ is a circular-arc graph.
	\end{enumerate}
\end{lemma}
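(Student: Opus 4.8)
The plan is to close the cycle of implications $1\Rightarrow 2\Rightarrow 3\Rightarrow 4\Rightarrow 1$. Three of these four arrows come for free and hold for every graph, not just cobipartite ones: the implication $1\Rightarrow 2$ is exactly \Cref{lem:1-p.o.}; for $2\Rightarrow 3$, an induced $4$-cycle is in particular a chordless cycle of length at least four, i.e.\ a hole, so any hole-cyclic orientation orients every induced $4$-cycle cyclically; and $4\Rightarrow 1$ is the classical fact, recalled in the introduction via \cite{MR666799,urrutia1992algorithm}, that every circular-arc graph is $1$-perfectly orientable. The remaining implication $3\Rightarrow 4$ is the only one that actually uses the cobipartite hypothesis, and for it the simplest course is to cite \cite{MR3647815}, where the equivalence of properties~1, 3~and~4 for cobipartite graphs was already established; \Cref{lem:1-p.o.} is precisely what lets us append property~2 to that list at no cost.

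If instead one wants a self-contained argument for $3\Rightarrow 4$, here is the approach I would take. Write $V(G)=A\cup B$ with $A$ and $B$ cliques. The cobipartite structure forces every hole of $G$ to be an induced $4$-cycle: an induced cycle meets each clique in at most two vertices, which are then consecutive on the cycle, so it has at most four vertices. Hence every induced $C_4$ of $G$ has two vertices in $A$, two in $B$, and its two non-edges run across, and statement~3 says that each such $C_4$ is oriented cyclically in a fixed orientation $D$. To build a circular-arc model, represent each vertex of $A$ by an arc through a point $p_A$ of the circle and each vertex of $B$ by an arc through a point $p_B\neq p_A$, so that arcs within a part always intersect, while an $A$-arc and a $B$-arc intersect precisely when their restrictions to one of the two circular intervals bounded by $p_A$ and $p_B$ overlap. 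The orientation that $D$ assigns to the induced $4$-cycles then prescribes, consistently, for every cross non-edge $ab$ on which side and how far the arcs of $a$ and $b$ must stop short of one another, and the cyclicity of $D$ on each $C_4$ is exactly the condition that prevents ``crossing'' conflicts among these prescriptions, so that they can be realized simultaneously around the circle.

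The main obstacle is this last step: promoting the purely local data (an orientation of each individual induced $C_4$) to one global circular-arc representation. Concretely one must (a) extract from $D$ a consistent cyclic order on the non-edges of $G$ --- equivalently, on the edges of the bipartite complement $\overline{G}$ --- and (b) verify that the resulting arc assignment reproduces the adjacency relation between $A$ and $B$ exactly, with no spurious intersections. The reduction of holes to $4$-cycles is what keeps the required case analysis finite, and it is the cyclic (rather than arbitrary) orientation of each $C_4$ that rules out the obstructions to a consistent circular order: dropping cyclicity would leave only a linear order on the non-edges, which corresponds to interval rather than circular-arc graphs. Since \cite{MR3647815} already carries out work of this kind, in the paper I would simply cite it for $1\Leftrightarrow 3\Leftrightarrow 4$ and contribute the two easy arrows $1\Rightarrow 2$ and $2\Rightarrow 3$.
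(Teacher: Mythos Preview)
Your proposal is correct and matches the paper's approach exactly: the paper does not give a standalone proof of this lemma, but rather cites \cite{MR3647815} for the equivalence of properties~1, 3, and~4 in cobipartite graphs, and notes that property~2 slots in trivially via \Cref{lem:1-p.o.} (giving $1\Rightarrow 2$) together with the observation that every induced $4$-cycle is a hole (giving $2\Rightarrow 3$). Your cycle $1\Rightarrow 2\Rightarrow 3\Rightarrow 4\Rightarrow 1$ is precisely this, and your final paragraph correctly anticipates that the paper simply cites the reference rather than reproving $3\Rightarrow 4$.
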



Another important notion for the algorithm is that of bisimplicial elimination orderings. A vertex $v$ in a graph $G$ is \emph{bisimplicial} if its neighborhood is the union of two cliques in $G$ (or, equivalently, if the graph $G[N(v)]$ is cobipartite). Let $ G=(V,E) $ be a graph and let $ \sigma=(v_1, \ldots,v_n) $ be a vertex ordering of $ G$. We say that $ \sigma $ is a \emph{bisimplicial elimination ordering} of $ G $ if $ v_i $ is bisimplicial in the graph $ G[\{v_1, \ldots, v_i\}] $ for every $ i \in \{1, \ldots, n\} $.

\begin{theorem}[\citet{MR2916349}]\label{prop:bisimplicial}
The maximum weight clique problem is solvable in time $\mathcal{O}(|V(G)|^4)$ in the class of graphs having a bisimplicial elimination ordering.
\end{theorem}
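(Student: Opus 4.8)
Write $n=|V(G)|$ and let $\mathrm{mwc}(H)$ denote the maximum weight of a clique in a graph $H$ (with the convention $\mathrm{mwc}=0$ on the empty graph). The plan is to use the elimination ordering to reduce the global problem to $n$ \textbf{local} maximum weight clique problems, each one taking place inside the neighborhood of a bisimplicial vertex, where the host graph is cobipartite and the problem becomes a bipartite optimization problem solvable by flows.

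First I would produce a bisimplicial elimination ordering $\sigma=(v_1,\dots,v_n)$ of $G$, computing it greedily if it is not supplied: repeatedly delete an arbitrary bisimplicial vertex of the current graph. This is correct because the class of graphs admitting a bisimplicial elimination ordering is hereditary --- deleting a vertex only shrinks the neighborhoods of later vertices, and a union of two cliques minus a vertex is still a union of two cliques --- so after removing \emph{any} bisimplicial vertex $u$ of $G$, the induced subgraph $G-u$ still admits such an ordering and we may recurse. Writing $G_i:=G[\{v_1,\dots,v_i\}]$, the engine of the algorithm is the identity
$$
\mathrm{mwc}(G)\;=\;\max_{1\le i\le n}\Bigl(w(v_i)+\mathrm{mwc}\bigl(G_i[N_{G_i}(v_i)]\bigr)\Bigr).
$$
This holds because every nonempty clique $K$ of $G$ has a $\prec_\sigma$-last vertex $v_i$, and then $K\subseteq N_{G_i}[v_i]$, so $K\setminus\{v_i\}$ is a clique of $G_i[N_{G_i}(v_i)]$; conversely, any clique of $G_i[N_{G_i}(v_i)]$ together with $v_i$ is a clique of $G$.

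Next, fix $i$ and let $H_i:=G_i[N_{G_i}(v_i)]$. Since $v_i$ is bisimplicial in $G_i$, the graph $H_i$ is cobipartite, hence its complement $\overline{H_i}$ is bipartite, and a clique in $H_i$ is exactly an independent set in $\overline{H_i}$; thus $\mathrm{mwc}(H_i)$ equals the maximum weight of an independent set in the bipartite graph $\overline{H_i}$. This is the classical tractable case: compute a bipartition of $\overline{H_i}$, and use the weighted form of K\H{o}nig's theorem --- the maximum weight of an independent set is $w(V(H_i))$ minus the minimum weight of a vertex cover, and the latter is the value of a minimum $s$--$t$ cut in the standard network obtained by joining a source $s$ to one side and a sink $t$ from the other side with arc capacities equal to the vertex weights (nonnegative, as $w\colon V\to\mathbb{R}_+$) and giving the edges of $\overline{H_i}$ infinite capacity. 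A single maximum flow computation on this network, which has $\mathcal{O}(n)$ vertices, yields both $\mathrm{mwc}(H_i)$ and, by reading off the source side of a minimum cut, an optimal clique of $H_i$.

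Finally I would assemble the running time: computing the ordering greedily costs at most $\mathcal{O}(n^4)$ (testing bisimpliciality of a vertex is polynomial and is done $\mathcal{O}(n^2)$ times), and the main loop performs $n$ maximum flow computations, each on a network with $\mathcal{O}(n)$ vertices and $\mathcal{O}(n^2)$ arcs, which with a standard maximum flow algorithm runs in $\mathcal{O}(n^3)$ time; this gives the claimed $\mathcal{O}(n^4)$ bound, after which we output the best clique found over all $i$. The step that needs the most care is making the local subproblem genuinely polynomial with the right exponent: one must argue cleanly that bisimpliciality yields the cobipartite structure of $H_i$, translate the weighted clique problem there into a min-cut problem, extract an actual optimal clique (not merely its weight), and bound the flow computation tightly enough that its product with the outer loop does not exceed $n^4$. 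By comparison, the elimination-ordering decomposition identity and the heredity argument justifying the greedy ordering are routine.
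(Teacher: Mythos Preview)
Your proposal is correct and follows essentially the same approach as the paper: both decompose the problem along the bisimplicial elimination ordering into $n$ local maximum weight clique problems in cobipartite graphs $G_i[N_{G_i}(v_i)]$, pass to the bipartite complement, and solve maximum weight independent set there via a max-flow/min-cut computation in $\mathcal{O}(n^3)$ per vertex, while computing the ordering greedily in $\mathcal{O}(n^4)$ when it is not given. The only cosmetic differences are that the paper phrases the local step as computing a maximum weight clique in $G_i[N[v_i]]$ rather than adding $w(v_i)$ to $\mathrm{mwc}(G_i[N_{G_i}(v_i)])$, and it cites the $\mathcal{O}(n'(n'+m'))$ bound for bipartite maximum weight independent set directly rather than spelling out the K\H{o}nig/min-cut reduction.
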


The algorithm can be summarized as follows.

\begin{algorithm}\label{alg3}
	\KwIn{A graph $G=(V,E)$, a weight function $ w:V\to \mathbb{R}_+$, and a bisimplicial elimination ordering $ \sigma=(v_1, \ldots , v_n) $}
	\KwOut{A maximum weight clique $ C^* $ of $ G $}
	\vspace{0.25cm}
		$ C^*:= \emptyset $\;
		
		\For{$ i=0 $ \KwTo $ n-1 $}{
			$ v := \sigma(n-i) $\;
			
			Compute a maximum weight clique $ C_v $ of $ G[N(v)] $\;
			
			\If{$ w(C_v)> w(C^*) $} {$ C^* := C_v$}
			$ G:= G - v $\;
		}
	\vspace{0.25cm}
	\caption{Solving the maximum weight clique problem in graphs with a bisimplicial elimination ordering}
\end{algorithm}

The degree of the polynomial involved in the running time of the algorithm given in~\cite{MR2916349} was not estimated; it was based on polynomial-time solvability of the maximum weight clique problem in the class of perfect graphs. However, the algorithm can be implemented to run in time $\mathcal{O}(|V(G)|^4)$, as follows. Suppose first that the input graph $G$ is equipped with a bisimplicial elimination ordering $(v_1,\ldots, v_n)$. Letting $G_i = G[\{v_1, \ldots, v_i\}] $ for every $ i \in \{1, \ldots, n\} $, at each step of the algorithm, the maximum weight of a clique in $G_i$ is computed by comparing the (recursively computed) maximum weight of a clique in $G_{i-1}$ with the maximum weight a clique containing the current vertex $v_i$. This latter value is computed by solving the maximum weight clique problem in the graph $G_i[N[v_i]]$, which is a cobipartite graph. The maximum weight clique problem in a cobipartite graph $G$ is equivalent to the maximum weight independent set problem in its complement $\overline{G}$, which is bipartite. The maximum weight independent set problem in a bipartite graph with $n'$ vertices and $m'$ edges can by reduced to solving an instance of the maximum flow problem~\cite{MR712925} and can thus be solved in time $\mathcal{O}(n'(n'+m'))$, see, e.g.,~\cite{MR3210838}. It follows that the maximum weight clique problem is solvable in time $\mathcal{O}(n^3)$ in an $n$-vertex cobipartite graph. Consequently, the maximum weight clique problem is solvable in time $\mathcal{O}(n^4)$ in the class of $n$-vertex graphs equipped with a bisimplicial elimination ordering. If a bisimplicial elimination ordering of the input graph is not known, at each step of the algorithm a bisimplicial vertex $v_i$ is first computed in $G_i$. Testing if a vertex $v$ in $G_i$ is bisimplicial can be done in time $\mathcal{O}(|V(G_i)|^2)$ by testing if $\overline{G_i[N(v)]}$ is bipartite; hence, in time $\mathcal{O}(|V(G_i)|^3)$ a bisimplicial vertex $v_i$ in $G_i$ can be found, and a bisimplicial elimination ordering of $G$ can be computed in time $\mathcal{O}(n^4)$.

\medskip
\begin{sloppypar}
As shown by \citet{MR2462311}, every graph without even induced cycles has a bisimplicial vertex. We show next that this property also holds for hole-cyclically orientable graphs. This fact, instrumental to the polynomial-time solvability of the maximum weight clique problem in this class of graphs, is based on a simple argument involving avoidable vertices.
\end{sloppypar}

\begin{lemma}\label{lem:avoidable-bisimplicial}
Every avoidable vertex in a hole-cyclically orientable graph is bisimplicial.
\end{lemma}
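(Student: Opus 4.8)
The plan is to argue by contradiction, turning the hole-cyclic orientation into a $2$-coloring of $N(v)$. Fix a hole-cyclic orientation $D$ of the graph $G$ and suppose $v$ is avoidable but not bisimplicial. Then $G[N(v)]$ is not cobipartite, so its complement $\overline{G[N(v)]}$ is not bipartite and hence contains an odd cycle; that is, there are pairwise distinct neighbors $x_1,\dots,x_{2\ell+1}$ of $v$, for some $\ell\ge 1$, with $x_ix_{i+1}\notin E(G)$ for every $i$ (indices taken modulo $2\ell+1$). Color each neighbor $w$ of $v$ \emph{red} if the edge $vw$ is oriented from $v$ to $w$ in $D$ and \emph{blue} otherwise. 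I will show that consecutive vertices $x_i$ and $x_{i+1}$ always receive different colors; since an odd cycle admits no proper $2$-coloring, this is the desired contradiction.

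The heart of the argument is this colour-alternation claim. Fix $i$. Since $x_i$ and $x_{i+1}$ are non-adjacent neighbors of $v$, the path $x_i - v - x_{i+1}$ is an induced $P_3$ with midpoint $v$. As $v$ is avoidable, this $P_3$ closes to an induced cycle $C$ in $G$. Because $x_ix_{i+1}\notin E(G)$, the cycle $C$ contains, besides $v$, $x_i$, $x_{i+1}$, at least one more vertex, so $|V(C)|\ge 4$; being chordless, $C$ is a hole. Hence $C$ is oriented cyclically in $D$, which means that of the two edges of $C$ incident with $v$ — and these are exactly $vx_i$ and $vx_{i+1}$ — one is oriented into $v$ and the other out of $v$. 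Equivalently, exactly one of $x_i,x_{i+1}$ is an out-neighbor of $v$, so $x_i$ and $x_{i+1}$ get different colors, as claimed.

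Once these pieces are in place the proof is complete, so there is no substantial obstacle; the only points needing a little care are the two ``parity'' observations. First, one should recall that non-bipartiteness of $\overline{G[N(v)]}$ is equivalent to the existence of an odd cycle there, and note that the coloring argument uses only consecutive pairs $x_i,x_{i+1}$, so no control over chords of this cycle inside $G$ is required. Second, one must record explicitly why $C$ has length at least $4$ — this is precisely what makes $C$ a hole rather than a triangle, and the hole-cyclic hypothesis says nothing about how triangles are oriented.
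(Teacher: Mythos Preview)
Your proof is correct and rests on the same core observation as the paper's: if $x$ and $y$ are non-adjacent neighbors of the avoidable vertex, the induced $P_3$ $x{-}v{-}y$ closes to a hole, which is oriented cyclically, forcing exactly one of $x,y$ into $N_D^+(v)$ and the other into $N_D^-(v)$. The paper simply uses this observation more directly---it concludes immediately that $N_D^+(v)$ and $N_D^-(v)$ are each cliques, so their union witnesses bisimpliciality---rather than passing through the odd-cycle-in-the-complement contradiction, but the argument is essentially the same.
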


\begin{proof}
Let $ G=(V,E) $ be a hole-cyclically  orientable graph, let $D$ be a hole-cyclic orientation of $G$, and let $ a \in V $ be an avoidable vertex in $G$. Suppose that $v,w \in N_D^+(a) $ and $ v $ and $ w $ are not adjacent in $ G $. As $ a $ is avoidable, the path $ v-a-w $ can be closed to an induced cycle $ C $ in $G$. Since $ D$ is a hole-cyclic orientation of $G$ and $C$ is a hole in $G$, we infer that $ C $ is oriented cyclically in $D$. This is a contradiction to the fact that $ v,w \in N_+(a) $. It follows that $ N_D^+(a) $ is a clique in $ G $. The same argument also holds for $ N_D^-(a) $. Since $N_G(a) = N_D^+(a) \cup N_D^-(a)$, this implies that $ a $ is bisimplicial in $ G $.
\end{proof}

\Cref{lem:avoidable-bisimplicial} and \Cref{corollary2} lead to the following.

\begin{sloppypar}
\begin{theorem}\label{thm:bisimplicial-ordering}
Every hole-cyclically orientable graph has a bisimplicial vertex.
Moreover, a bisimplicial elimination ordering of a hole-cyclically orientable graph can be computed in linear time.
\end{theorem}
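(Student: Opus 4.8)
The plan is to combine \Cref{lem:avoidable-bisimplicial} with the LBFS-based machinery of \Cref{corollary2}, exploiting the key fact that the class of hole-cyclically orientable graphs is closed under taking induced subgraphs. First I would observe this hereditary property: if $D$ is a hole-cyclic orientation of $G$ and $S\subseteq V(G)$, then restricting $D$ to $G[S]$ yields a hole-cyclic orientation of $G[S]$, since every hole of $G[S]$ is also a hole of $G$ (being chordless in the induced subgraph means being chordless in $G$, as $G[S]$ is induced). Hence every induced subgraph of a hole-cyclically orientable graph is again hole-cyclically orientable.

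Next, for the existence statement: let $G$ be hole-cyclically orientable. By \Cref{thm:avoidable} (equivalently \Cref{corollary2}), $G$ has an avoidable vertex $a$, and by \Cref{lem:avoidable-bisimplicial} this vertex $a$ is bisimplicial in $G$. This already proves the first sentence. For the algorithmic part, I would run LBFS on $G$ to obtain an ordering $\sigma = (v_1,\ldots,v_n)$. By \Cref{corollary2}, for each $i$ the vertex $v_i$ is avoidable in the induced subgraph $G_i := G[\{v_1,\ldots,v_i\}]$. By the hereditary property just established, each $G_i$ is hole-cyclically orientable, so \Cref{lem:avoidable-bisimplicial} applies to $G_i$ and tells us that $v_i$ is bisimplicial in $G_i$. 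Thus $\sigma$ is by definition a bisimplicial elimination ordering of $G$. Since LBFS runs in linear time (see, e.g.,~\cite{MR2063679}), this ordering is computed in linear time, which is the claimed bound.

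I do not anticipate a genuine obstacle here: every ingredient is already in place, and the argument is essentially an assembly. The one point requiring a moment's care is the hereditary-closure observation — one must note that a hole-cyclic orientation need not be inherited ``as is'' in the sense that the restriction could in principle create new holes, but this cannot happen precisely because we restrict to an \emph{induced} subgraph, so no chordless cycle of $G[S]$ acquires a chord in $G$ and hence every hole of $G[S]$ was already a (cyclically oriented) hole of $G$. With that observation, the proof is immediate.
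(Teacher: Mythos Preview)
Your proposal is correct and follows essentially the same approach as the paper: combine \Cref{corollary2} with \Cref{lem:avoidable-bisimplicial} to conclude that any LBFS ordering of a hole-cyclically orientable graph is a bisimplicial elimination ordering, and invoke the linear-time implementation of LBFS. The only difference is that you make explicit the hereditary closure of hole-cyclically orientable graphs under induced subgraphs, which the paper leaves implicit.
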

\end{sloppypar}

\begin{proof}
By \Cref{lem:avoidable-bisimplicial} and \Cref{corollary2}, every LBFS ordering of a hole-cyclically orientable graph is a bisimplicial elimination ordering. Given any graph $G$, an LBFS ordering of $G$ can be computed in linear time (see, e.g., \cite{MR2063679}).
\end{proof}

Theorems~\ref{prop:bisimplicial} and~\ref{thm:bisimplicial-ordering} imply that the maximum weight clique problem is solvable in time
$\mathcal{O}(|V(G)|^4)$ in the class of hole-cyclically orientable graphs. We can improve the running time further using the structure of cobipartite graphs in this class given by \Cref{lemma5}. A \emph{circular-arc elimination ordering} of a graph $G$ is an ordering $(v_1, \ldots,v_n) $ of the vertices of $G$ such that for every $ i \in \{1, \ldots, n\} $, the subgraph of $G$ induced by $\{v_j\mid 1\le j\le i, v_j\in N_G(v_i)\}$ is a circular-arc graph.
By \Cref{lemma5}, every cobipartite hole-cyclically orientable graph is a circular-arc graph. Thus, any bisimplicial elimination ordering of a hole-cyclically orientable graph $G$ is also a circular-arc elimination ordering of $G$.

\begin{sloppypar}
\begin{theorem}\label{thm:max-weight-clique}
The maximum weight clique problem is solvable in time
\hbox{${\mathcal{O}}(n(n\log n+m\log\log n))$}
in the class of hole-cyclically orientable graphs
with $n$ vertices and $m$ edges.
\end{theorem}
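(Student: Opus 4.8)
The plan is to combine the bisimplicial elimination ordering produced by LBFS (\Cref{thm:bisimplicial-ordering}) with a fast algorithm for the maximum weight clique problem on cobipartite graphs, and to account carefully for the sizes of the subproblems. First I would run LBFS on the input graph $G$, obtaining in linear time a vertex ordering $\sigma = (v_1,\ldots,v_n)$ which, by \Cref{thm:bisimplicial-ordering}, is a bisimplicial elimination ordering of $G$; moreover, by the discussion preceding the theorem, it is also a circular-arc elimination ordering, so for each $i$ the graph $H_i := G[\{v_j : j\le i,\ v_j\in N_G(v_i)\}]$ is a cobipartite circular-arc graph. I would then run Algorithm~4 (the elimination scheme behind \Cref{prop:bisimplicial}): process the vertices in reverse order $v_n, v_{n-1}, \ldots, v_1$, and at step $i$ compute a maximum weight clique of $G_i[N(v_i)] = H_i$, updating the running best. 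Correctness is exactly the correctness of \Cref{prop:bisimplicial}; only the running time needs a sharper analysis.

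The key point is that $H_i$ is not merely cobipartite but a circular-arc graph, and a maximum weight clique in a circular-arc graph on $n'$ vertices and $m'$ edges can be computed faster than via bipartite matching/flow. I would invoke a known algorithm for maximum weight clique on circular-arc graphs running in time ${\mathcal O}(n'\log n' + m'\log\log n')$ (obtaining first a circular-arc model, or working directly from the recognition step, then sweeping the circle and maintaining the heaviest arc-clique with an appropriate priority-queue structure; the $\log\log$ factor comes from using a van Emde Boas–type or integer-priority-queue structure on the ${\mathcal O}(n')$ arc endpoints). Writing $n_i = |V(H_i)|$ and $m_i = |E(H_i)|$, the total cost is ${\mathcal O}\!\left(\sum_i (n_i\log n_i + m_i\log\log n_i)\right)$, and since $n_i = d_G(v_i) \le n$ and $\sum_i m_i \le \sum_i \binom{d_G(v_i)}{2} = {\mathcal O}\!\left(\sum_i d_G(v_i)^2\right)$, this is dominated by ${\mathcal O}\!\left(n\sum_i \log n + \log\log n\sum_i m_i\right)$. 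A crude bound $\sum_i m_i = {\mathcal O}(nm)$ — each $H_i$ has at most $d_G(v_i) \le n$ vertices and its edges are among the ${\mathcal O}(m)$ edges of $G$, hence $m_i = {\mathcal O}(m)$ — then yields the claimed ${\mathcal O}(n(n\log n + m\log\log n))$ overall; and $\sum_i n_i\log n_i = {\mathcal O}(n^2\log n)$ fits inside the same bound.

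I also need to verify that the per-step overheads are subsumed: extracting $H_i$ from $G$ and deleting $v_i$ costs ${\mathcal O}(n_i + m_i) = {\mathcal O}(n+m)$ per step, for ${\mathcal O}(n(n+m))$ total, which is dominated; and the one-time LBFS is linear. The main obstacle is the circular-arc maximum weight clique subroutine: I must either cite a reference establishing the ${\mathcal O}(n'\log n' + m'\log\log n')$ bound for weighted max clique on circular-arc graphs (given a graph, not a model), or sketch why a circular-arc model of $H_i$ can be produced within the recognition step of \Cref{lemma5} and why the subsequent sweep achieves this bound — the $\log\log$ term in particular relies on the integer structure of the $2n'$ sorted endpoints and a suitable priority queue. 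Once that subroutine is in place, the rest is the straightforward bookkeeping sketched above.
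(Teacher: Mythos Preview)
Your proposal is correct and follows essentially the same approach as the paper: compute a bisimplicial (hence circular-arc) elimination ordering via LBFS in linear time, then run the elimination scheme, solving each cobipartite subproblem as a circular-arc maximum weight clique instance. The paper resolves your stated uncertainty about the subroutine by citing explicit references for a linear-time circular-arc model construction and an ${\mathcal O}(n'\log n' + m'\log\log n')$ weighted clique algorithm on circular-arc graphs given such a model, after which the per-step bound summed over $n$ steps yields the claimed total exactly as in your bookkeeping.
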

\end{sloppypar}

\begin{proof}
A bisimplicial elimination ordering of a given
hole-cyclically orientable graph can be computed in linear time by \Cref{thm:bisimplicial-ordering}. Thus, we can apply Algorithm \ref{alg3}.

Due to \Cref{lemma5}, any cobipartite hole-cyclically orientable graph is a circular-arc graph. Solving the maximum weight clique problem in a given circular-arc graph $G$ equipped with a given circular-arc model can be done in time {${\mathcal{O}}(n\log n+m\log\log n)$}~\cite{MR998461,bhattacharya1997ano}. A circular-arc model of a given circular-arc graph $G$ can be found in time $\mathcal{O}(n+m) $ \cite{mcconnell2003linear}. Using these algorithms as subroutines in Algorithm \ref{alg3} we obtain the stated running time.
\end{proof}

Since every $1$-perfectly orientable graph is hole-cyclically orientable,
\Cref{thm:max-weight-clique} has the following consequence.

\begin{sloppypar}
\begin{corollary}\label{cor:max-weight-clique-1.p.o}
The maximum weight clique problem is solvable in time
\hbox{${\mathcal{O}}(n(n\log n+m\log\log n))$}
in the class of $1$-perfectly orientable graphs
with $n$ vertices and $m$ edges.
\end{corollary}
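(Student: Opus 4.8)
The plan is to deduce this as an immediate specialization of \Cref{thm:max-weight-clique}, using the containment of graph classes supplied by \Cref{lem:1-p.o.}. Concretely, I would first argue that every $1$-perfectly orientable graph is hole-cyclically orientable: if $G$ is $1$-perfectly orientable, then by definition it admits a $1$-perfect orientation $D$, and by \Cref{lem:1-p.o.} this orientation $D$ is hole-cyclic; hence $G$ admits a hole-cyclic orientation and is therefore hole-cyclically orientable. Thus the class of $1$-perfectly orientable graphs is a subclass of the class of hole-cyclically orientable graphs.

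With this containment in hand, I would simply invoke \Cref{thm:max-weight-clique}, which already establishes the bound ${\mathcal{O}}(n(n\log n+m\log\log n))$ for the maximum weight clique problem on every hole-cyclically orientable graph with $n$ vertices and $m$ edges; restricting attention to the subclass of $1$-perfectly orientable graphs yields the corollary. The one point worth spelling out is that the algorithm behind \Cref{thm:max-weight-clique} takes as input only the graph $G$ together with the weight function, and not a hole-cyclic (or $1$-perfect) orientation of $G$: the bisimplicial elimination ordering it uses is produced by an LBFS of $G$ via \Cref{thm:bisimplicial-ordering}, which requires only that $G$ belong to the relevant class, not that any orientation be given. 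So membership of $G$ in the class of $1$-perfectly orientable graphs is all that is used, and it already forces membership in the class of hole-cyclically orientable graphs.

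There is essentially no obstacle here; the statement is a direct consequence of \Cref{thm:max-weight-clique} along the class inclusion furnished by \Cref{lem:1-p.o.}. The only thing to be careful about is not to assert the reverse inclusion, which is false: Figure~\ref{fig4} exhibits a hole-cyclically orientable graph that is not $1$-perfectly orientable, so \Cref{cor:max-weight-clique-1.p.o} is strictly weaker than \Cref{thm:max-weight-clique}, as it should be.
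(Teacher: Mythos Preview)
Your proposal is correct and matches the paper's approach exactly: the paper derives the corollary in a single sentence from \Cref{thm:max-weight-clique} via the class inclusion given by \Cref{lem:1-p.o.}. Your additional remarks about the algorithm not requiring an orientation as input and about the non-reversibility of the inclusion are accurate and helpful but not needed for the proof itself.
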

\end{sloppypar}

In the unweighted case, the same approach as that used in the proof of \Cref{thm:max-weight-clique} results in ${\mathcal{O}}(n(n\log n+m))$ algorithm for the maximum clique problem in the class of hole-cyclically orientable graphs with $n$ vertices and $m$ edges.

\subsection*{A small digression to digraphs}

We conclude this section by showing that the existence of avoidable vertices of graphs along with the approach used in the proof of \Cref{lem:avoidable-bisimplicial} can be applied not only to orientations of simple graphs but also to digraphs in which pairs of oppositely oriented edges are allowed. This leads to results that can be interpreted in the context of information passing in communication networks.

A simple digraph $D = (V,E)$ is \emph{semi-complete} if for every two distinct vertices $u,v\in V$ we have $(u,v)\in E$ or $(v,u)\in E$, and \emph{ out-semi-complete} (resp., \emph{in-semi-complete}) if for every vertex $v\in V$, the subdigraph of $D$ induced by its out-neighborhood $N^+_D(v)$
(resp., in-neighborhood $N^-_D(v)$) is semi-complete. We say that a set of vertices in a digraph is \emph{semi-complete} if it induces a semi-complete digraph. The \emph{underlying graph} of a digraph $D$ is the graph $G$ with vertex set $V(D)$ and $uv\in E(G)$ if and only if $(u,v)\in E(D)$ or $(v,u)\in E(D)$.

\begin{lemma}\label{lem:vertex-in-digraph}
Let $D$ be an out-semi-complete digraph, let $G$ be its underlying graph, and let $v$ be an avoidable vertex in $G$. Then, the in-neighborhood of $v$ in $D$ is semi-complete.
\end{lemma}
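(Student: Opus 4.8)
The plan is to argue by contradiction, adapting the proof of \Cref{lem:avoidable-bisimplicial} but taking extra care of the bidirected arcs that a general digraph may contain. Suppose $x,y\in N^-_D(v)$ are two distinct vertices that are not adjacent in $G$. Since $(x,v),(y,v)\in E(D)$, both $xv$ and $yv$ are edges of $G$, so $x-v-y$ is an induced $P_3$ in $G$ with midpoint $v$. As $v$ is avoidable in $G$, this path closes to an induced cycle $C$ in $G$; since $C$ has length at least four, it is a hole. I would record for later that in $C$ the two cycle-edges incident with $v$ are $xv$ and $vy$, and that in $D$ both of them are oriented toward $v$.

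Next I would study how $D$ orients the edges of $C$. For a vertex $c$ on $C$, call a cycle-edge $cc'$ at $c$ \emph{outgoing at $c$} if $(c,c')\in E(D)$; note that a bidirected cycle-edge is outgoing at both of its endpoints. The key local observation is that for every vertex $c$ of $C$ its two cycle-neighbours are non-adjacent in $G$, because $C$ is a chordless cycle of length at least four; hence they cannot both lie in $N^+_D(c)$, since $D$ is out-semi-complete. Consequently at most one of the two cycle-edges at $c$ is outgoing at $c$. On the other hand, each of the $|V(C)|$ cycle-edges is an edge of $G$, hence it corresponds to an arc of $D$ in at least one direction, i.e.\ it is outgoing at one or both of its endpoints. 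Summing over all vertices of $C$ the number of incident cycle-edges outgoing at them, one obtains a total that is simultaneously at most $|V(C)|$ and at least $|V(C)|$.

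Equality then forces, on the one hand, that every cycle-edge of $C$ is outgoing at exactly one endpoint --- in particular, no cycle-edge of $C$ is bidirected --- and, on the other hand, that every vertex of $C$ has exactly one incident cycle-edge outgoing at it. In other words, the edges of $C$ form a directed cycle in $D$. But this contradicts the situation at $v$: both cycle-edges $xv$ and $vy$ are oriented toward $v$ and, by the previous sentence, not bidirected, so $v$ has no incident cycle-edge of $C$ outgoing at it. This contradiction shows that $x$ and $y$ are adjacent in $G$; as $x,y$ were an arbitrary pair in $N^-_D(v)$, the in-neighbourhood of $v$ in $D$ is semi-complete.

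I expect the main obstacle to be precisely the bidirected arcs: in \Cref{lem:avoidable-bisimplicial} one simply invokes that the hole is oriented cyclically, but here the local argument alone does not exclude a bidirected cycle-edge. The global double-counting step is what promotes the local constraints into the statement that $C$ is a genuine directed cycle, which then clashes with both $xv$ and $vy$ pointing into $v$; keeping this bookkeeping correct (and remembering that ``outgoing at $c$'' can hold at both ends of an edge before equality is established) is the crux.
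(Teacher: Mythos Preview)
Your proof is correct. Both you and the paper begin identically (assume $x,y\in N^-_D(v)$ are non-adjacent, close $x\!-\!v\!-\!y$ to a hole $C$), but the paper then argues by a short induction along $C$: writing $C=(v=v_1,\,y=v_2,\ldots,v_k=x,\,v_1)$, it shows $(v_i,v_{i-1})\in E(D)$ for every $i=2,\ldots,k$ (each step uses out-semi-completeness at $v_{i-1}$'s successor), and obtains the contradiction at $v_k=x$, where both $v_1$ and $v_{k-1}$ lie in $N^+_D(x)$ yet are non-adjacent. You instead run a global double count over the vertices of $C$ to force $C$ to be a genuine directed cycle, and then locate the contradiction at $v$.

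The trade-off: the paper's induction is slightly leaner because it never needs to address bidirected arcs explicitly --- it only ever asserts that one particular direction is present, which is all that is required for the final out-neighbourhood clash at $x$. Your counting argument proves a bit more along the way (no cycle-edge is bidirected, every vertex of $C$ has exactly one outgoing cycle-edge) and is pleasantly symmetric, at the price of the extra bookkeeping you flag in your last paragraph. Either route is perfectly adequate here.
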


\begin{sloppypar}
\begin{proof}
Suppose for a contradiction that $N^-_D(v)$ is not semi-complete.
Then, there is a pair $x,y\in N^-_D(v)$ of non-adjacent vertices in $G$. Since $v$ is avoidable in $G$, the induced path $x-v-y$ extends to an induced cycle $C$ in $G$. Let $(v = v_1,y = v_2, v_3,\ldots, v_k = x, v_1)$ be a cyclic order of vertices on $C$. Then $k\ge 4$.

We claim that for every $i\in \{2,\ldots, k\}$, we have $(v_{i},v_{i-1})\in E(D)$. We show this by induction on $i$. The base case, $i =  2$, holds since $v_2 = y$ is an out-neighbor of $v_1 = v$. Suppose that $(v_{i},v_{i-1})\in E(D)$ for some $i\ge 2$. If $(v_{i+1},v_{i})\not\in E(D)$, then, since $G$ is the underlying graph of $D$, we must have $(v_{i},v_{i+1})\in E(D)$. But then $v_{i-1}$ and $v_{i+1}$ are two non-adjacent vertices in the out-neighborhood of $v_i$, contradicting the assumption that $D$ is out-semi-complete.
This establishes the inductive step and the claim.

But now, since $(v_{k},v_{1})\in E(D)$ and $(v_{k},v_{k-1})\in E(D)$,
vertices $v_{1}$ and $v_{k-1}$ are two non-adjacent vertices in the out-neighborhood of $v_k$, contradicting the assumption that $D$ is out-semi-complete.
\end{proof}
\end{sloppypar}

\Cref{lem:vertex-in-digraph} and~\Cref{thm:avoidable} imply the following.

\begin{sloppypar}
\begin{theorem}\label{cor:digraph}
Every out-semi-complete digraph $D$ contains a vertex whose in-neighborhood is semi-complete. In addition, if $|V(D)|\ge 2$, then $D$ contains at least two vertices whose in-neighborhoods are semi-complete.
\end{theorem}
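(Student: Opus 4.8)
The plan is to obtain this as a direct packaging of \Cref{lem:vertex-in-digraph} together with the existence results for avoidable vertices. First I would let $G$ be the underlying graph of the given out-semi-complete digraph $D$. Since $G$ is an (undirected) graph with non-empty vertex set, \Cref{thm:avoidable} supplies an avoidable vertex $v$ in $G$. Applying \Cref{lem:vertex-in-digraph} to $D$, $G$, and $v$ immediately gives that $N^-_D(v)$ is semi-complete, which is exactly the first assertion.

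For the second assertion I would instead invoke the stronger existence statement \Cref{thm:two-avoidable}: when $|V(D)| = |V(G)| \ge 2$, the graph $G$ contains two distinct avoidable vertices (\Cref{thm:two-avoidable} in fact yields a diametral pair, but distinctness is all that is needed here). Call them $v_1$ and $v_2$. Applying \Cref{lem:vertex-in-digraph} separately to $v_1$ and to $v_2$ shows that both $N^-_D(v_1)$ and $N^-_D(v_2)$ are semi-complete, producing the required pair of vertices.

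I do not anticipate any genuine obstacle: the only point that needs a moment's care is the translation between $D$ and its underlying graph $G$ — namely, that an avoidable vertex of $G$ is a bona fide vertex of $D$, and that "semi-complete in $D$" is precisely the conclusion delivered by \Cref{lem:vertex-in-digraph} — and this is immediate from the definitions. One may also observe, for the reader's reassurance, that the presence of oppositely oriented arc pairs in $D$ causes no difficulty, since such a pair is collapsed to a single edge of $G$ and the hypothesis of \Cref{lem:vertex-in-digraph} is about $G$ together with the out-semi-completeness of $D$, both of which are unaffected.
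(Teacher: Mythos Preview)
Your proposal is correct and matches the paper's own derivation: the paper simply states that \Cref{lem:vertex-in-digraph} and \Cref{thm:avoidable} imply the result, without further argument. Your write-up is in fact slightly more precise than the paper's one-line justification, since you explicitly invoke \Cref{thm:two-avoidable} to obtain the two distinct avoidable vertices needed for the second assertion.
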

\end{sloppypar}

The contrapositive statement is the following.

\begin{corollary}\label{cor:digraphs-in-out}
Every digraph with at least two vertices in which at most one in-neighborhood is semi-complete has an out-neighborhood that is not semi-complete.
\end{corollary}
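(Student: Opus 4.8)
The plan is to obtain this statement as the literal contrapositive of the second assertion of \Cref{cor:digraph}, so the proof will be a two-line argument rather than anything new. First I would fix notation: let $D$ be a digraph with $|V(D)|\ge 2$ in which at most one vertex has a semi-complete in-neighborhood, and assume for contradiction that every vertex of $D$ has a semi-complete out-neighborhood, i.e.\ that $D$ is out-semi-complete. Since $|V(D)|\ge 2$, the ``in addition'' part of \Cref{cor:digraph} applies and produces at least two distinct vertices of $D$ whose in-neighborhoods are semi-complete, contradicting the hypothesis. Hence $D$ is not out-semi-complete, which unwinds exactly to the assertion that some vertex of $D$ has an out-neighborhood that is not semi-complete.

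The only points requiring (minimal) care are the quantifier bookkeeping — checking that the negation of ``$D$ is out-semi-complete'' is precisely ``there exists a vertex of $D$ whose out-neighborhood is not semi-complete'', matching the phrasing of the corollary — and noting that the hypothesis $|V(D)|\ge 2$ is exactly what licenses the use of the stronger (two-vertex) conclusion of \Cref{cor:digraph} rather than its one-vertex form; the one-vertex form alone would not suffice to contradict ``at most one''. There is essentially no obstacle: all the content sits in \Cref{cor:digraph}, hence ultimately in \Cref{lem:vertex-in-digraph} and \Cref{thm:avoidable}, and this corollary is recorded only to make that contrapositive reading explicit.
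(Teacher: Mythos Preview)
Your proposal is correct and matches the paper's own approach exactly: the paper introduces this corollary with the sentence ``The contrapositive statement is the following'' and gives no further argument, so your two-line contrapositive of the second assertion of \Cref{cor:digraph} is precisely what is intended.
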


\begin{sloppypar}
A digraph can be viewed as a communication network, where each vertex is both an information source (passing information to its out-neighbors) and an information recipients (receiving information from its in-neighbors).
We say that two information sources / receivers are \emph{independent} if the corresponding vertices are non-adjacent.
Thus, Theorem~\ref{cor:digraph} and Corollary~\ref{cor:digraphs-in-out} can be interpreted as follows:
\begin{itemize}
  \item If no participant passes information to at least two independent recipients, then at least two participants each get information from sources that are not pairwise independent.
  \item If each participant (but possibly one) gets information from at least two independent sources, then there exists a participant who sends information to at least two independent recipients.
\end{itemize}
\end{sloppypar}

    \section{Avoidable Edges in Graphs}

    \label{sec:edges}

We will call an edge $e$ in a graph $G$ \emph{avoidable} (resp., \emph{simplicial}) if the path $P_2$ induced by its endpoints is avoidable (resp., simplicial). In particular, if an edge $e$ in a graph $G$ is not the middle edge of any induced $P_4$, then $e$ is simplicial and thus avoidable. A sufficient condition for an edge $e = uv$ in a graph $G$ to be simplicial is that it is \emph{bisimplicial}, i.e., that $N(u)\cup N(v)$ induces a complete bipartite graph. Bisimplicial edges are relevant for \emph{perfect  elimination bipartite} graphs, defined as bipartite graphs whose edges can be eliminated by successively removing both endpoints of a bisimplicial edge, and for their subclass \emph{chordal bipartite graphs}, defined as bipartite graphs without induced cycles of length more than $4$, see~\cite{golumbicGoss1978,MR2063679}. Note that an edge in a bipartite graph is simplicial if and only if it is bisimplicial. Thus, the fact that every chordal bipartite graph is perfect elimination bipartite can be equivalently stated as follows: every chordal bipartite graph with an edge has a simplicial edge.

The case $k = 2$ of \Cref{conj:paths} states that every graph with an edge has an avoidable edge. \Cref{thm:higher-chordality} settles this case of the conjecture for $\{C_{5}, C_{6}, \ldots\}$-free graphs; in fact, it asserts that every $\{C_{5}, C_{6}, \ldots\}$-free graph with an edge has a simplicial edge. Since every chordal bipartite graph is $\{C_{5}, C_{6}, \ldots\}$-free, this generalizes the above result for chordal bipartite graphs. A related result is that of~\citet{MR1363691} stating that a graph is weakly chordal (that is, both the graph and its complement are $\{C_{5}, C_{6}, \ldots\}$-free) if and only if its edges can be eliminated one at a time, where each eliminated edge is simplicial in the subgraph consisting of the remaining edges.

In this section, we prove the case $k = 2$ of \Cref{conj:paths} for all graphs. Given a graph $G$, two edges will be called \emph{independent} in $ G $ if their endpoints form an induced $ 2K_2 $ in $ G $. We first consider the case when the graph contains no two independent edges.

\begin{lemma}\label{lemma3}
Let $ G $ be a graph with at least two edges but with no two independent edges. Then $ G $ contains at least two avoidable edges.
\end{lemma}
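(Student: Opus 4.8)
The plan is to understand the structure of a graph $G$ with at least two edges but no two independent edges, and then exhibit avoidable edges directly. A graph with no induced $2K_2$ is quite restricted: if we pick any edge $e = uv$, then every other edge must share a vertex with $e$ or be joined to $e$ by some edge. More usefully, $2K_2$-free graphs have the property that their complements are $C_4$-free, but here I would rather work directly. First I would observe that since $G$ has at least two edges, it has a vertex of degree at least $2$ or two disjoint edges; the latter is excluded, so in fact the edge set is ``clustered'' — I would try to show that $G$ has a dominating edge or even a dominating vertex, or split into the case where $G$ has a universal vertex and the case where it does not.

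The cleaner route is probably this. Let $e=uv$ be an edge maximizing $|N[u]\cup N[v]|$ (or, alternatively, consider a connected component with an edge — note that at most one component can contain an edge, since two edges in different components would be independent, so $G$ is essentially connected plus isolated vertices, which we may delete). Within this component, because there is no induced $2K_2$, every edge meets the closed neighborhood of $e$ in a strong sense. I would then aim to prove that the edge $e$ itself, or one of a small number of candidate edges near $e$, is simplicial or avoidable: I want to show that any induced $P_4$ with $e$ as its middle edge — say $x - u - v - y$ with $x\not\sim v$, $y\not\sim u$, $x\not\sim y$ — extends to an induced cycle. But $x u$ and $v y$ together with the non-adjacencies $xv$, $uy$, $xy$ already force $\{x,u\}$ and $\{v,y\}$... no — $xu$ and $vy$ share no vertex and their endpoints induce $2K_2$ unless one of the cross pairs $xv,xy,uy$ is an edge. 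Since none is, $xu$ and $vy$ are independent edges, contradicting the hypothesis. Hence \emph{no} edge of $G$ is the middle edge of an induced $P_4$ at all, so \emph{every} edge of $G$ is simplicial, hence avoidable.

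That last observation actually trivializes the ``avoidability'' part: in a $2K_2$-free graph, no induced $P_4$ exists whose middle edge has both outer edges ``dangling'', precisely because those two outer edges would form a $2K_2$; more directly, a $2K_2$-free graph contains no induced $P_4$ whose endpoints are nonadjacent to the far endpoint — but in fact an induced $P_4$ always contains an induced $2K_2$ on its two end-edges, so a $2K_2$-free graph has no induced $P_4$ whatsoever. Therefore every edge of $G$ is simplicial. It remains only to check that $G$ has \emph{at least two} avoidable (indeed simplicial) edges, i.e. that $G$ has at least two edges — which is the hypothesis. So the whole statement reduces to: (i) an induced $P_4$ contains an induced $2K_2$ (immediate: its two end edges); (ii) hence no edge of a $2K_2$-free graph is the middle edge of an induced $P_4$, so every edge is simplicial and thus avoidable; (iii) there are at least two edges by assumption.

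The main obstacle, such as it is, is just making sure the definitions line up: that ``simplicial edge'' here means precisely ``not the middle edge of an induced $P_4$,'' that simplicial implies avoidable (stated in the excerpt), and that we do not need the two avoidable edges to be distinct as unordered pairs beyond what ``at least two edges'' gives — which is fine. I would write it in two or three lines, essentially: if $G$ had an edge $uv$ that is the middle edge of an induced $P_4$ $x-u-v-y$, then $\{xu, vy\}$ would be two independent edges of $G$, a contradiction; hence every edge of $G$ is simplicial, and since simplicial edges are avoidable and $G$ has at least two edges, $G$ has at least two avoidable edges.
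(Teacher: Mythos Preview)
Your argument has a genuine gap at its core. You claim that in an induced $P_4$ $x-u-v-y$, the two end edges $xu$ and $vy$ form an induced $2K_2$. They do not: among the four ``cross pairs'' between $\{x,u\}$ and $\{v,y\}$ you list $xv$, $xy$, $uy$ and correctly note these are non-edges, but you omit $uv$, which \emph{is} an edge---it is the very middle edge of the $P_4$. So the subgraph induced on $\{x,u,v,y\}$ is $P_4$, not $2K_2$, and the edges $xu$, $vy$ are not independent in the paper's sense. In particular, $P_4$ itself is $2K_2$-free, and more generally $2K_2$-free graphs can (and typically do) contain induced $P_4$'s.

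The example $G=P_4$ already shows that your conclusion ``every edge of $G$ is simplicial'' is false: the middle edge of $P_4$ is the middle edge of an induced $P_4$ that cannot close to an induced cycle, so it is not even avoidable. The lemma still holds for $P_4$---the two end edges are simplicial---but you need an argument that actually locates two good edges, not one that declares all edges good. The paper does this by induction on $|V(G)|$: take a minimal cutset $S$, note that if $G-S$ is all isolated vertices then edges from those vertices into $S$ are simplicial, and otherwise exactly one component $C$ of $G-S$ contains an edge; apply induction to $C$ (or handle $|V(C)|=2$ directly) and argue that the avoidable edges found there remain avoidable in $G$, using the isolated vertices of $G-S$ to close any $P_4$ that leaves $C$. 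Some such structural argument is unavoidable here.
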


\begin{proof}
Suppose that the lemma is false and let $G$ be a counterexample minimizing the number of vertices. We may assume that $G$ is not complete, since otherwise any two edges in $G$ are simplicial and thus avoidable. Let $ S $ be a minimal cutset of $ G $.
	
\textbf{Case 1:} \emph{Graph $ G-S $ consists of isolated vertices.}

As $ S $ is a minimal cutset, every one of these (at least two) isolated vertices must be adjacent to every vertex in $ S $. Let $ c $ be such a vertex and let $ s $ be an arbitrary vertex in $ S $. Suppose there is an induced $ P_4 $, say $ P = x-c-s-y $ with $ cs $ as a middle edge. Clearly, vertex $ x $ must be in $ S $ and thus $ y $ also has to be in $ S $, as $ x $ is adjacent to every vertex outside of $ S $. This is a contradiction as $ c $ is adjacent to every vertex in $ S $. Therefore, $ cs $ is avoidable and as there are at least two isolated vertices in $ G-S $ we can find two such avoidable edges.
	
\textbf{Case 2:} \emph{Some connected component of $ G-S $ contains an edge.}

As there are no independent edges in $ G $ there can be only one such connected component, which we will denote with $ C $. Also, in $G-S$ there must be at least one isolated vertex $ c' $. Note that $c'$ is adjacent to every vertex in $ S $. We analyze two further subcases.

\textbf{Case 2a:} \emph{Component $ C $ has exactly two vertices.}

Let $ c_1 $ and $ c_2 $ be the two vertices of $C$.
Any $ P_4 $ with $ c_1c_2 $ as its middle edge must be of the form $ x-c_1-c_2-y $ with $ x $ and $ y $ in $ S $. Since $ c' $ is adjacent to every vertex in $ S $ but to none in $ C $, we can close this path to an induced cycle using $ c' $. Thus, $ c_1c_2 $ is avoidable. Let $ s_1 $ be a an arbitrary vertex in $ S $. Then either $ c's_1 $ is avoidable or it is the middle vertex of an induced $ P_4 $ that does not close to an induced cycle. Without loss of generality such a path is of the form $P= s_2-c'-s_1-c_1 $ where $ s_2 $ is an element of $ S $, as every isolated vertex of $ G-S $ is adjacent to every vertex of $ S $. Since edges $c_1c_2$ and $c's_2$ are not independent in $G$, we infer that $ s_2 $ is adjacent to $ c_2 $. As $ P $ does not close to an induced cycle, $ c_2 $ must be adjacent to $ s_1 $. We claim that in this case edge $ s_2c' $ is avoidable. Suppose $ s_2c' $ is the middle edge of an induced $ P_4 $. This path must be of the form $P' = c_2-s_2-c'-s_3 $ with $ s_3 \in S\setminus\{s_1\} $. Since $S$ is a minimal cutset in $G$, vertex $s_3$ is adjacent to a vertex in $C$. Thus, $ s_3 $ is adjacent to $ c_1 $ and path $P'$ closes to an induced cycle. As a result $ G $ has two avoidable edges.	
	
\textbf{Case 2b:} \emph{Component $ C $ has more than two vertices.}

Then $C$ has at least two edges, but no two independent edges.
Since $ G $ is a minimal counterexample, $C $ has at least two avoidable edges $ e = vw $ and $ f=xy $ which are not necessarily disjoint.
We claim that $e$ and $f$ are also avoidable in $G$. By symmetry, it suffices to show the claim for $e$. Since $e$ is avoidable in $C$, every $ P_4 $ with $ e $ as a middle edge that is completely contained in $ C $ can be closed to an induced cycle in $C$ and thus in $ G $ as well. Suppose that $ e $ is the middle edge of an induced $ P_4 $, say $ P= t_1-v-w-t_2 $, that is not completely contained in $ C $. First we assume that $ t_1 \in S $ and $ t_2 \notin S$. Then $ c'$ is adjacent to $ t_1 $, which implies that edges $c't_1$ and $wt_2$ are independent in $G$; a contradiction. Thus, both $ t_1 $ and $ t_2 $ are in $ S $, and we can close $ P $ to an induced cycle using the isolated vertex $ c' $, which is adjacent to $ t_1 $ and $ t_2 $ but not to any endpoint of $ e $. It follows that $G$  contains two avoidable edges.
\end{proof}

Clearly, if a graph has a single edge, this edge is avoidable. Thus, \Cref{lemma3} has the following consequence.

\begin{corollary}\label{corollary-avoidable-edge}
Every graph with at least one edge but with no two independent edges contains an avoidable edge.
\end{corollary}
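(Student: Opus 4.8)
The plan is to read this off directly from \Cref{lemma3}, after separating out the one boundary case that \Cref{lemma3} does not cover, namely a graph with exactly one edge. So I would argue by a two-case split on $|E(G)|$.

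First, suppose $G$ has exactly one edge $e$. Viewing $e$ as the induced $P_2$ on its two endpoints, the edge $e$ has no extension whatsoever: an extension would require an additional edge at \emph{each} end of $e$, and $G$ has no other edge. Hence $e$ is simplicial, and since every simplicial induced path is vacuously avoidable (as recorded right after \Cref{def:avoidable-paths}), the edge $e$ is avoidable. Thus $G$ contains an avoidable edge, as desired.

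Second, suppose $G$ has at least two edges. Then $G$ meets the hypotheses of \Cref{lemma3} verbatim — at least two edges and no two independent edges — so \Cref{lemma3} yields at least two avoidable edges in $G$, and in particular at least one.

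I do not expect any genuine difficulty here; all the substance lies in \Cref{lemma3}, whose proof took a minimal counterexample, reduced to the non-complete case, fixed a minimal cutset $S$, and ran the case analysis (the components of $G-S$ all being isolated vertices; otherwise a unique component $C$ carrying an edge, split further according to whether $|V(C)|=2$ or $|V(C)|>2$). The only points to watch in the corollary itself are: not to let the single-edge instance slip through, since \Cref{lemma3} requires at least two edges; and to invoke the observation made at the start of \Cref{sec:edges} that an edge which is the middle edge of no induced $P_4$ is simplicial, hence avoidable, which is precisely what makes the single-edge case immediate.
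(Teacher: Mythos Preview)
Your proof is correct and follows exactly the same approach as the paper: handle the single-edge case directly (the unique edge is simplicial, hence avoidable) and invoke \Cref{lemma3} when there are at least two edges. The paper's own justification is in fact just one sentence to this effect.
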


To consider the case not settled by~\Cref{lemma3}, we first introduce some more terminology. Two distinct edges will be called \emph{weakly adjacent} if they are not independent. The set $ N^E_G(e) $ will denote the set of edges of $G$ that are weakly adjacent to $ e $. The members of $ N^E_G(e) $ are the \emph{edge-neighbors} of $e$. An edge $ e\in E(G)$ will be called \emph{universal} in $ G $ if every edge of $ G $ other than $e$ is weakly adjacent to $e$. We say that an edge $ e = uv $ is \emph{adjacent to} a vertex $ w $ in $ G $ (and vice versa) if $ w \notin \{u,v\} $ and $ uw $ or $ vw $ is an element of $ E(G) $. The set of vertices that are adjacent to an edge $ e $ is called the \emph{vertex-neighborhood of} $ e $ and is denoted by $ N_G^V(e) $, while the set of edges adjacent to a vertex $ v $ is called the \emph{edge-neighborhood of} $v$ and is denoted by $ N^E_G(v)$. Note that the edge-neighborhood of any edge $e = uv $ is exactly the set of all edges having at least one endpoint in the vertex-neighborhood of $e$.

The case not considered by \Cref{lemma3} is settled in the next lemma.

\begin{lemma}\label{theorem2}
For every graph $ G $ and every non-universal edge $ e \in E(G) $ there is an edge $ f \in E(G) $ independent of $ e $ which is avoidable.
\end{lemma}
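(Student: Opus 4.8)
The plan is to imitate the proof of \Cref{thm:avoidable-vertices}, working in the line-graph-flavoured setting of edges but keeping the combinatorics on $G$ itself. I would take a counterexample $G$ with the fewest vertices and, subject to that, the most edges; let $e \in E(G)$ be a non-universal edge such that no edge independent of $e$ is avoidable. As in the vertex case, minimality forces $G$ to be connected. Since $e$ is non-universal, there is some edge $b$ independent of $e$ (forming an induced $2K_2$ with $e$). Because $b$ is not avoidable, it is the middle edge of an induced $P_4$, say $x-b_1-b_2-y$ with $b = b_1 b_2$, that does not close to an induced cycle; arguing exactly as in \Cref{thm:avoidable-vertices}, this produces a separation: deleting a minimal set $S \subseteq N^V_G(b)$ of vertices adjacent to $b$ together with the endpoints of $b$ disconnects $G$, with $e$ lying entirely in one component. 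Let $C$ be the component of $G - (S \cup \{b_1,b_2\})$ containing (both endpoints of) $e$; minimality of $S$ gives that every vertex of $S$ has a neighbour in $C$.

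Next I would split into two cases according to whether $b$ is a universal edge of the "truncated" graph $G - C$. If it is not, then $G - C$ is a smaller graph in which $b$ is non-universal, so by minimality of $G$ there is an avoidable edge $f$ in $G - C$ that is independent of $b$; since $f$ lives in a component of $G-(S\cup\{b_1,b_2\})$ other than $C$, it is also independent of $e$, and I would check (as in the vertex proof, using that no vertex of $C$ touches $f$) that any induced $P_4$ in $G$ through $f$ already lives in $G-C$, hence closes to an induced cycle there and thus in $G$ — contradiction. If instead $b$ is universal in $G - C$, I would build an auxiliary graph $G'$ from $G - C$ by adding a new vertex $d$ and a new edge $d$–$b_1$ (or by adding $d$ adjacent to an appropriate set so that the new edge $db_1$ is non-universal in $G'$ and plays the role of "$e$" relative to $S$); as in the vertex argument, $G'$ is either strictly smaller or has the same order with strictly more edges, so it has an avoidable edge $f$ independent of the new edge. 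One then argues $f$ avoids all the added structure, so $f \in E(G)$ and is independent of $e$, and finally that $f$ is avoidable in $G$: given an induced $P_4$ through $f$ in $G$, take the induced cycle closing the corresponding $P_4$ in $G'$; if it uses $d$, replace the two edges at $d$ (whose other endpoints lie in $S \cup \{b_1,b_2\}$) by a detour through $C$, available because $S$ is minimal and $C$ is connected, exactly as in the rerouting step of \Cref{thm:avoidable-vertices}.

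The main obstacle I anticipate is bookkeeping in the reduction rather than a genuinely new idea: unlike deleting a vertex, one must be careful about which edges survive, which become independent of which, and in particular about the base/degenerate configurations (when $C$ is small, when $S$ is small, when the $P_4$ through $b$ is short, when $b_1$ or $b_2$ itself lies in a would-be $P_4$). The gadget $G'$ must be chosen so that (a) the "new edge" is non-universal so the induction applies, (b) the returned avoidable edge cannot be the new edge or an edge incident to $d$, and (c) the rerouting through $C$ genuinely restores an \emph{induced} cycle — this last point needs that the interior of the rerouting path in $C$ is non-adjacent to the rest of the $P_4$, which is where the independence hypothesis "$f$ independent of $e$" and the component structure get used. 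I would also need the elementary observation, noted just before the lemma, that the edge-neighbourhood of $e=uv$ is precisely the set of edges with an endpoint in $N^V_G(e)$, to translate the "$2K_2$-free locally" conditions into vertex-separation statements. Once these degenerate cases are dispatched and the rerouting is verified, the contradiction with the choice of $e$ closes the proof, and combining with \Cref{lemma3} yields the case $k=2$ of \Cref{conj:paths}.
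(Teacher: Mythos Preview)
Your plan to port the vertex argument verbatim does not match the paper's proof and has a genuine gap in your Case~2. The paper does \emph{not} build an auxiliary graph $G'$ with a new vertex. Instead it (i) proves, by \emph{contracting $e$}, that every edge independent of $e$ is adjacent to all vertex-neighbours of $e$ (Claim~0); (ii) chooses the witness edge $f=vw$ independent of $e$ with the \emph{fewest edge-neighbours}, an extremal choice you never make but which drives Claims~2 and~3; (iii) takes $S\subseteq N_G^V(e)$ (not $N_G^V(b)$), so that $S\cup\{v,w\}$ separates $e$ from $x$; and (iv) shows that exactly one other component $C'$ carries an edge and that the relevant subgraph ($C'$, $G[C'\cup\{v\}]$, or $G[C'\cup\{v,w\}]$, depending on which of $v,w$ have neighbours outside $S\cup V(C')\cup\{v,w\}$) is $2K_2$-free, whence \Cref{corollary-avoidable-edge} furnishes an avoidable edge $g$ there. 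Bad $P_4$'s through $g$ are then closed using paths in $C$, exploiting $S\subseteq N_G^V(e)$.

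The concrete gap in your gadget is this: making $db_1$ non-universal in $G'=(G-C)+d$ requires an edge of $G-C$ with both endpoints outside $N_{G'}[d]\cup N[b_1]$. But ``$b$ universal in $G-C$'' is an \emph{edge} condition and is entirely compatible with every edge of $G-C$ having an endpoint in $N_{G-C}[b_1]$ (already when $G-C$ is the path $x\!-\!b_1\!-\!b_2\!-\!y$); then $db_1$ is universal in $G'$ and your induction does not apply. Even when it does, the rerouting can fail: the neighbours $p,q$ of $d$ on the closing cycle need not lie in $S$, because edge-universality of $b$ does not force $b_1,b_2$ off an induced path the way vertex-universality forced $b\notin V(P)$ in \Cref{thm:avoidable-vertices}; and $b_1,b_2$ need have no neighbour in $C$, so the detour through $C$ is unavailable. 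The paper's Claim~0 and the minimum-edge-neighbour choice of $f$ are exactly what replace the gadget and make the rerouting go through.
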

\begin{proof}
Suppose that the lemma is false and take a counterexample $G$ with the smallest possible number of vertices. Since $G$ is a counterexample, it has a non-universal edge $e \in E(G)$ such that no edge independent from $ e $ is avoidable.

We first prove a sequence of claims.

\medskip
\begin{claim}\label{claim0}
Every edge that is independent of $ e $ is adjacent to all vertex-neighbors of $ e $ in $ G $.
\end{claim}

\medskip

\begin{claimproof}
Suppose that the claim is false and let $f\in E(G)$ be an edge that is independent of $ e $ and non-adjacent to at least one vertex-neighbor $p$ of $ e $ in $ G $. Let $ G' $ be the graph resulting from contracting the edge $ e $ in $ G $. If $ e' $ denotes the vertex obtained from $ e $ in $ G' $, then $ pe' \in E(G') $ is independent of $ f $ in $ G' $. Since $ G $  is a minimal counterexample and as $ f $ is independent of $ pe' $ in $ G' $, there must be an edge $ g \in E(G') $ that is independent of $ pe' $ and avoidable in $ G' $. It is easy to see that $ g $ is also an edge in $ G $ that is independent of $ e $ and avoidable in $ G $; a contradiction.
\end{claimproof}
	
\medskip
Let $ f=vw $ be an edge that is independent of $ e $ and which has the fewest edge-neighbors among all such edges. By Claim~\ref{claim0}, $ f $ is adjacent to every neighbor of $ e $ and is not avoidable, i.e., it is the middle edge of an induced $ P_4 $, say $ P=x-v-w-y $, that cannot be closed to an induced cycle. This implies in particular that $ x $ and $ y $ cannot both be adjacent to $ e $.

Suppose that $ x $ is adjacent to $ e $. Then $ y $ is not adjacent to $e$. It follows that edge $ wy$ is independent of $ e $ and thus adjacent to all neighbors of $ e $, including $ x $; a contradiction. Therefore, $ x $ is not adjacent to $ e $ and, by symmetry, neither is $y$.

Note that $N_G^V(e)\subseteq N_G^V(f)\setminus \{x,y\}$. Moreover, the set $N_G^V(e)\cup\{v,w\}$ separates $ e $ from $ x $, that is,
$e$ and $x$ belong to different components of the graph $G - (N_G^V(e)\cup \{v,w\})$.  It follows that we can find a minimal set $ S \subseteq N_G^V(e)$ (possibly $S = \emptyset$) such that the set $S\cup \{v,w\}$ separates $e$ from $x$. Let $ C $ be the connected component of $ G - (S\cup \{v,w\}) $ containing $ e $.
 	
\medskip	
	\begin{claim}\label{claim1}
		Edge $ f $ is universal in $ G - C $.
	\end{claim}

\medskip
	\begin{claimproof}
		Suppose not. Then, due to the minimality assumption made on $ G $, there must be an edge $ h \in E(G - C) $ that is independent of $ f $ and avoidable in $ G - C $. As shown above, every vertex in $ S $ is adjacent to $ v $ or $ w $. Therefore, edge $ h $ must be fully contained in a connected component $ C' $ of $ G - (S\cup \{v,w\}) $. As there are no edges between  $ C' $ and $ C $ in $ G $, any induced $ P_4 $ that has $ h $ as a middle edge is contained in $ G - C $ and can be closed to an induced cycle. Thus, $ h $ is also avoidable in $ G $ as well as being independent of $ e $.
	\end{claimproof}
	
\medskip
Claim~\ref{claim1} has the following consequence.
	
\medskip
\begin{claim}\label{claim2}
Every edge $ g \in E(G - C) $ that does not have vertex-neighbors in $ C $ is universal in $ G - C $ and adjacent to every vertex in $S$.
In particular, this holds for any edge that is completely contained in a connected component of $ G - (S\cup \{v,w\}) $ not equal to $ C $.
\end{claim}

\medskip

\begin{claimproof}
Let $ g $ be an edge in $G - C$ that does not have vertex-neighbors (in $G$) outside of $G - C $. Then $g$ is independent of $e$. Recall that edge $ f $ was chosen to be independent of $ e $ with the smallest number of edge-neighbors. By Claim~\ref{claim1}, $f$ is universal in $ G - C $. Note that each edge-neighbor of $ g $ is either contained in $ G - C $ or contains a vertex from $S \cup \{v,w\}$. This means that each edge-neighbor of $ g $ is either equal to $f$ or is an edge-neighbor of $ f $. The choice of $f$ implies $N_G^E(g) = (N_G^E(f)\setminus \{g\})\cup \{f\}$, showing that $g$ is universal in $G- C$.

Let $ s $ be an arbitrary vertex in $ S $. Due to the choice of $ S $, vertex $ s $ must have a neighbor in $ e $, say $ t $, and is adjacent to $ f $. Thus, edge $st$ is an edge-neighbor of $f$. As this edge is also an edge-neighbor of $g$ and $ g $ cannot be adjacent to $ t $, it follows that $g$ is adjacent to $ s $.
\end{claimproof}

\medskip
The next claim restricts the structure of components of the graph $ G - (S \cup \{v,w\}) $.

\medskip
\begin{claim}\label{claim3}
Exactly one component of $ G - (S \cup \{v,w\}) $ other than $ C $ contains an edge.
\end{claim}

\medskip
\begin{claimproof}
Suppose first that at least two connected components of $ G - (S \cup \{v,w\}) $ other than $ C $ contain edges. Consider two such components and one edge in each. By Claim \ref{claim2}, each of these edges is universal in $ G - C $. This contradicts the fact that they are independent. It follows that at most one component of $ G - (S \cup \{v,w\}) $ other than $ C $ contains an edge.

Suppose for a contradiction that no component of $ G - (S \cup \{v,w\}) $ other than $ C $ contains an edge, that is, all such components are trivial. In particular, the component of $ G - (S \cup \{v,w\}) $ containing vertex $x$ is trivial. Consider the edge $g = xv$. We already know that $g$ is independent of $e$. This implies that $g$ is not avoidable in $G$. Thus, there exists an induced $ P_4 $ in $ G $ having $g$ as the middle edge, say $ Q = t_1 - x - v - t_2 $, that does not close to an induced cycle. Clearly, $ t_1 $ must be in $ S $ and thus adjacent to $ e $. It follows that $ t_2 $ cannot be adjacent to $ e $, since otherwise we could close $ Q $ to an induced cycle. But now, edge $ t_2v $ is independent of $ e $ and hence, by Claim~\ref{claim0}, adjacent to every neighbor of $ e $, including $ t_1 $; a contradiction.
\end{claimproof}

\medskip
By Claim~\ref{claim3}, exactly one component of $ G - (S \cup \{v,w\}) $ other than $ C $ contains an edge. Let $C'$ be this component. We complete the proof by considering three exhaustive cases.

\medskip
\textbf{Case 1:} \emph{Both $ v $ and $ w $ have neighbors outside of $ S \cup V(C') \cup \{v,w\} $.}

It follows from Claim~\ref{claim2} that every edge in $C'$ is universal in $C'$. Therefore, \Cref{corollary-avoidable-edge} implies that $C'$ contains an avoidable edge $ g $. Since $g$ is independent of $e$, it is not avoidable in $G$. This means that there exists an induced $ P_4 $ in $ G $ having $g$ as the middle edge, say $ Q = t_1 - g_1 - g_2 - t_2 $, that does not close to an induced cycle. Since $g$ is avoidable in $C'$, path $Q$ cannot be fully contained in $C'$. Without loss of generality, we may assume that $t_1\not\in V(C')$. It follows that $t_1\in S\cup\{v,w\}$.

Suppose that $t_2 \in V(C')$. If $ t_1 \in S $, then $ t_2 $ or $ g_2 $ must be adjacent to $ t_1 $, due to Claim \ref{claim2}, as $ g_2t_2 $ is an edge contained in $ C' $; this is a contradiction. Therefore, $t_1\in \{v,w\}$.
As both $ v $ and $ w $ have neighbors outside of $ S \cup V(C') \cup \{v,w\}$, similar arguments as those used in the proof of Claim \ref{claim2} can be used to show that either $ t_2 $ or $ g_2 $ must be adjacent to $ t_1$; a contradiction. This shows that $t_2\not\in V(C')$ and therefore $t_2\in S\cup\{v,w\}$.

If both $ t_1 $ and $ t_2 $ are in $ S $, then $ Q $ can be closed to an induced cycle through $ e $, as all vertices in $ S $ are adjacent to $ e $. Therefore, we may assume without loss of generality that $ t_1 \in \{v,w\}$ and $ t_2 \in S $. By the assumption of Case 1, vertex $t_1$ has a neighbor $z$ outside of $S \cup V(C') \cup \{v,w\}$. If $z$ is in $C$, then there is a path from $t_1$ to $t_2$ through $C$, and if $z$ is not in $C$, then $z$ is an isolated vertex of $ G - (S \cup \{v,w\}) $ and
$t_1-z-t_2$ is a path in $G$. In either case, using such a path, $ Q $ can be closed to an induced cycle in $G$; a contradiction.

\medskip
\textbf{Case 2:} \emph{Exactly one of $v$ and $w$ has a neighbor outside of $ S \cup V(C') \cup \{v,w\} $.}

We may assume without loss of generality that $ v $ does not have any neighbors outside of $ S \cup V(C') \cup \{v,w\} $, but $ w $ does. Claim \ref{claim2} implies that every edge in $ G[C' \cup \{v\}] $ is universal. Hence, we infer, using \Cref{corollary-avoidable-edge}, that $ G[C' \cup \{v\}] $ contains an avoidable edge $ g $. Since $ v $ does not have any neighbors outside of $ S \cup V(C') \cup \{v,w\}$, edge $g$ (which might contain $v$ as an endpoint) is independent of $e$. This implies that $g$ is not avoidable in $G$, i.e., there exists an induced $ P_4 $ in $ G $ having $g$ as the middle edge, say $ Q = t_1 - g_1 - g_2 - t_2 $, that does not close to an induced cycle. Since $g$ is avoidable in $ G[C' \cup \{v\}]$, path $Q$ cannot be fully contained in $G[C' \cup \{v\}]$. Without loss of generality, we may assume that $t_1\not\in V(C')\cup \{v\}$. It follows that $t_1\in S\cup\{w\}$.

Suppose that $t_2 \in V(C')\cup\{v\}$. If $t_1 \in S $, then $ t_2 $ or $ g_2 $ must be adjacent to $ t_1 $, due to Claim \ref{claim2}; this is a contradiction. Therefore, $t_1= w$. As $w$ has a neighbor outside of $ S \cup V(C') \cup \{v,w\}$, similar arguments as those used in the proof of Claim \ref{claim2} can be used to show that either $ t_2 $ or $ g_2 $ must be adjacent to $t_1$, leading again to a contradiction. This shows that $t_2\not\in V(C')\cup \{v\}$ and therefore $t_2\in S\cup\{w\}$.

If both $ t_1 $ and $ t_2 $ are in $ S $, then $ Q $ can be closed to an induced cycle through $ e $, as all vertices in $ S $ are adjacent to $ e $. Therefore, we may assume without loss of generality that $ t_1 = w$ and $t_2 \in S $. By the assumption of Case 2, vertex $w$ has a neighbor $z$ outside of $S \cup V(C') \cup \{v,w\}$. If $z$ is in $C$, then there is a path from $t_1$ to $t_2$ through $C$. On the other hand, if $z$ is not in $C$, then $z$ is an isolated vertex of $ G - (S \cup \{v,w\}) $ and $t_1-z-t_2$ is a path in $G$. In either case, using such a path $ Q $ can be closed to an induced cycle in $G$; a contradiction.
					
\medskip
\textbf{Case 3:} \emph{Neither $ v $ nor $ w $ has a neighbor outside of $ S \cup V(C') \cup \{v,w\}$.}

Claim \ref{claim2} implies that every edge in $ G[C' \cup \{v,w\}] $ is universal, hence, by \Cref{corollary-avoidable-edge}, there is an avoidable edge $ g $ in $ G[C' \cup \{v,w\}] $. The assumption of Case 3 implies that edge $g$ is independent of $e$. This implies the existence of an induced $ P_4 $ in $ G $ having $g$ as the middle edge, say $ Q = t_1 - g_1 - g_2 - t_2 $, that does not close to an induced cycle. Since $g$ is avoidable in $ G[C' \cup \{v,w\}]$, path $Q$ cannot be fully contained in $G[C' \cup \{v,w\}]$. Without loss of generality, we may assume that $t_1\not\in V(C')\cup \{v,w\}$. It follows that $t_1\in S$.
Moreover, $t_2\in S$, since otherwise $t_2$ would belong to $V(C')\cup\{v,w\}$ and Claim \ref{claim2} would imply that one of $ t_2 $ or $ g_2 $ is adjacent to $ t_1 $; a contradiction. Since both $ t_1 $ and $ t_2 $ are in $ S $, path $ Q $ can be closed to an induced cycle through $ e $, as all vertices in $ S $ are adjacent to $ e $; a contradiction.
	
This completes the proof of the lemma.
\end{proof}

Lemmas~\ref{lemma3} and~\ref{theorem2} imply the following.

\begin{theorem}\label{thm:avoidable-edges}
Every graph with an edge has an avoidable edge. Every graph with at least two edges has two avoidable edges.
\end{theorem}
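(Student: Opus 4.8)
The plan is to deduce Theorem~\ref{thm:avoidable-edges} directly from the two preceding lemmas by a straightforward case split on whether or not the graph contains two independent edges. First I would dispose of the trivial situation: if $G$ has exactly one edge, that edge is vacuously avoidable (it is not the middle edge of any induced $P_4$), and the second assertion is vacuous since there are not two edges; so from now on assume $|E(G)|\ge 2$.

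Next I would split into two cases according to whether $G$ contains two independent edges. If $G$ has \emph{no} two independent edges, then Lemma~\ref{lemma3} applies directly and gives two avoidable edges, which is exactly what is wanted (and in particular at least one). So suppose $G$ does contain two independent edges; I must produce two avoidable edges. Pick any edge $e_1$; since $G$ has an edge independent of $e_1$, edge $e_1$ is non-universal, so Lemma~\ref{theorem2} yields an avoidable edge $f_1$ with $f_1$ independent of $e_1$ (in particular $f_1\ne e_1$). Now I need a \emph{second} avoidable edge distinct from $f_1$. Since $f_1$ is independent of $e_1$, edge $e_1$ is independent of $f_1$, so $f_1$ is itself non-universal in $G$; applying Lemma~\ref{theorem2} again with $f_1$ in the role of the non-universal edge produces an avoidable edge $f_2$ that is independent of $f_1$, hence $f_2\ne f_1$. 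Thus $\{f_1,f_2\}$ are two distinct avoidable edges, establishing the second (and hence also the first) assertion.

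The only subtlety — and the step I would be most careful about — is making sure the two invocations really do deliver \emph{distinct} edges, which is guaranteed precisely because ``independent'' forces the two edges to be distinct (indeed vertex-disjoint), so there is no circularity or coincidence to worry about; no minimality or induction is needed at this final stage, all the real work having been done in Lemmas~\ref{lemma3} and~\ref{theorem2}. I would phrase the write-up in three or four lines, citing Lemma~\ref{lemma3} for the independent-edge-free case and Lemma~\ref{theorem2} twice for the remaining case, and note the one-edge graph as a degenerate case handled separately.
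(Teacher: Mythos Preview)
Your approach is exactly the one the paper uses: handle the one-edge case trivially, invoke Lemma~\ref{lemma3} when no two edges are independent, and otherwise apply Lemma~\ref{theorem2} twice to get two distinct avoidable edges. There is one small slip in your write-up: in the second case you say ``Pick any edge $e_1$; since $G$ has an edge independent of $e_1$, edge $e_1$ is non-universal'', but the mere existence of \emph{some} independent pair does not guarantee that an \emph{arbitrary} edge $e_1$ has an independent partner (e.g.\ in $P_5$ the middle edges are universal even though the end edges are independent). You should instead take $e_1$ to be one member of a known independent pair, exactly as the paper does; with that one-word fix your argument is correct and coincides with the paper's proof.
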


\begin{proof}
Clearly, if $G$ has a single edge, that edge is avoidable. So let $G$ be a graph with at least two edges. If $G$ does not have two independent edges, then \Cref{lemma3} applies and the desired conclusion follows. If, on the other hand, $G$ does have a pair of independent edges, say $e$ and $e'$, then $e$ is not universal in $G$ and hence by \Cref{theorem2} there is an edge $ f \in E(G) $ independent of $ e $ which is avoidable. Since $f$ is independent of $e$, it is not universal. Applying \Cref{theorem2} again, we find that $G$ contains an avoidable edge $f'$ that is independent of $ f$. Clearly, the edges $f$ and $f'$ are distinct.
\end{proof}

An application of this theorem to edge-transitive graphs is given in the next section.

    \section{Consequences for Highly Symmetric Graphs}

    \label{sec:symmetric}

In this section we summarize the consequences of existence results for avoidable vertices and edges for vertex- and edge-transitive graphs. An \emph{automorphism} of a graph $G$ is a bijection from the vertex set of $G$ to itself that maps edges to edges and non-edges to non-edges. A graph $G$ is said to be \emph{vertex-transitive} if for any two vertices $u,v\in V(G)$ there exists an automorphism of $G$ mapping $u$ to $v$. Similarly, $G$ is said to be \emph{edge-transitive} if for any two edges $e,f\in E(G)$ there exists an automorphism of $G$ mapping $e$ to $f$.

By \Cref{thm:avoidable}, the midpoint of any induced $P_3$ in a vertex-transitive graph is avoidable. This implies the following consequence for vertex-transitive graphs.

\begin{corollary}\label{thm:VT}
Every induced $ P_3 $ in a vertex-transitive graph closes to an induced cycle.
\end{corollary}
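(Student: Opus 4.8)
The plan is to derive \Cref{thm:VT} directly from \Cref{thm:avoidable} together with the definition of an avoidable vertex (\Cref{def:avoidable-vertex}), exploiting vertex-transitivity to "move" an induced $P_3$ onto one centered at an avoidable vertex. First I would take an arbitrary induced $P_3$, say $x-v-y$, in a vertex-transitive graph $G$. By \Cref{thm:avoidable}, $G$ has some avoidable vertex $a$. Since $G$ is vertex-transitive, there is an automorphism $\varphi$ of $G$ with $\varphi(a) = v$. Avoidability is preserved by automorphisms (it is a purely structural property, defined in terms of induced $P_3$'s and induced cycles), so $v = \varphi(a)$ is also avoidable in $G$.

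Once $v$ is known to be avoidable, the conclusion is immediate from the "equivalent" formulation in \Cref{def:avoidable-vertex}: every induced $P_3$ with midpoint $v$ closes to an induced cycle. In particular the given path $x-v-y$ closes to an induced cycle, which is exactly the statement of \Cref{cor:VT}. So the whole argument is essentially two lines: avoidability transports along automorphisms, and vertex-transitivity lets us place an avoidable vertex at the midpoint of any prescribed induced $P_3$.

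There is no serious obstacle here; the only point requiring a word of care is the claim that avoidability is automorphism-invariant, but this is clear because \Cref{def:avoidable-vertex} characterizes avoidable vertices entirely in terms of the induced-subgraph structure of $G$ (induced $P_3$'s centered at the vertex and the induced cycles containing them), all of which is preserved under any automorphism. One should also note the statement is vacuously fine if $G$ has no induced $P_3$ at all. I would present the proof in this order: invoke \Cref{thm:avoidable} to get an avoidable vertex, use vertex-transitivity to map it to the midpoint of the given $P_3$, observe avoidability is preserved, and conclude via the second characterization in \Cref{def:avoidable-vertex}.

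Finally, I would remark (as the excerpt's introduction foreshadows) that the analogous statement fails for longer induced paths — there exist vertex-transitive graphs containing an induced $P_4$ that does not close to an induced cycle — so the result is, in a sense, best possible at this level; giving such an example would round out the discussion but is not needed for the proof of \Cref{thm:VT} itself.
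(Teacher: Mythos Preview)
Your proposal is correct and follows essentially the same argument as the paper: the paper simply notes that by \Cref{thm:avoidable} every graph has an avoidable vertex, and hence in a vertex-transitive graph every vertex is avoidable, so the midpoint of any induced $P_3$ is avoidable and the corollary follows directly from \Cref{def:avoidable-vertex}. Your additional remarks on automorphism-invariance and the failure for longer paths align with the paper's discussion as well.
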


As shown by the example in Figure~\ref{fig:VT}, a statement analogous to that of \Cref{thm:VT} fails for longer paths.

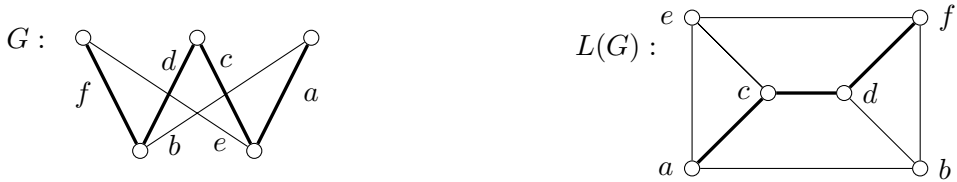
\begin{figure}[ht!]
    \begin{minipage}{0.49\textwidth}
        \centering
        \begin{tikzpicture}[vertex/.style={inner sep=2pt,draw,circle},auto,rotate=90,scale=1.5]

        \node[] (0) at (1,2.5) {$ G: $};

        \node[vertex](1) at (0,0.5) {};
        \node[vertex](2) at (0,1.5) {};
        \node[vertex](3) at (1,0) {};
        \node[vertex](4) at (1,1) {};
        \node[vertex](5) at (1,2) {};

        \node[] (6) at (0.5,0) {$ a $};
        \node[] (7) at (0.8,0.75) {$ c $};
        \node[] (8) at (0.8,1.25) {$ d $};
        \node[] (9) at (0.5,2) {$ f $};
        \node[] (10) at (0.05,0.8) {$ e $};
        \node[] (8) at (0.05,1.2) {$ b $};

        \draw[] (1)--(3)--(2)--(4)--(1)--(5)--(2);
        \draw[line width = 1.25] (5)--(2)--(4)--(1)--(3);
        \end{tikzpicture}
    \end{minipage}
    \begin{minipage}{0.49\textwidth}
        \centering
        \begin{tikzpicture}[vertex/.style={inner sep=2pt,draw,circle},auto]

        \node[] (0) at (-1,1.6) {$ L(G): $};

        \node[vertex,label=left:$ a $](1) at (0,0) {};
        \node[vertex,label=right:$ b $](2) at (3,0) {};
        \node[vertex,label=left:$ c $](3) at (1,1) {};
        \node[vertex,label=right:$ d $](4) at (2,1) {};
        \node[vertex,label=left:$ e $](5) at (0,2) {};
        \node[vertex,label=right:$ f $](6) at (3,2) {};

        \draw[line width =1.25] (1)--(3)--(4)--(6);
        \draw[] (4)--(2)--(6)--(5)--(3)--(5)--(1)--(2);
        \end{tikzpicture}
    \end{minipage}
    \caption{The graph $ L(G) $ is vertex-transitive and contains an induced four-vertex path $(a,c,d,f)$ that does not close to an induced cycle.}
\label{fig:VT}
\end{figure}

A similar consequence can be derived for edge-transitive graphs, by considering avoidable vertices in the line graph of a graph. By \Cref{thm:pseudo-avoidable-edges}, the middle edge of any $3$-edge path in an edge-transitive graph is pseudo-avoidable.

\begin{corollary}\label{thm:ET}
Every $3$-edge path in an edge-transitive graph closes to a cycle.
\end{corollary}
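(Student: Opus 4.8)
The plan is to derive the corollary from \Cref{thm:pseudo-avoidable-edges} via a routine symmetry argument. Let $G$ be an edge-transitive graph and let $P = v_0 - v_1 - v_2 - v_3$ be an arbitrary $3$-edge path in $G$ (not necessarily induced); its middle edge is $e = v_1v_2 \in E(G)$, and the goal is to show that $P$ closes to a cycle.

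The first step is to record that being pseudo-avoidable is an automorphism-invariant property of edges. Every automorphism $\varphi$ of $G$ induces a bijection on $E(G) = V(L(G))$ sending an edge $uv$ to $\varphi(u)\varphi(v)$, and this bijection is an automorphism of $L(G)$, since two edges of $G$ have a common endpoint if and only if their images do. As the property of a vertex being avoidable is preserved by graph automorphisms, an edge $f$ is pseudo-avoidable in $G$ if and only if $\varphi(f)$ is. (If one prefers to avoid line graphs here, the same conclusion follows directly from the characterization of pseudo-avoidable edges recalled in \Cref{sec:concepts}: an automorphism of $G$ carries $3$-edge paths to $3$-edge paths and cycles to cycles while fixing which edge is the middle one, so the property ``every $3$-edge path with $f$ as middle edge closes to a cycle'' is clearly automorphism-invariant.)

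With this in hand, the conclusion is immediate. By \Cref{thm:pseudo-avoidable-edges}, $G$ has a pseudo-avoidable edge $f$; by edge-transitivity there is an automorphism $\varphi$ of $G$ with $\varphi(f) = e$; and the observation above then shows that $e$ itself is pseudo-avoidable. Hence, by the definition of pseudo-avoidable edges, every $3$-edge path of $G$ having $e$ as its middle edge — in particular the path $P$ — closes to a cycle. Since $P$ was arbitrary, the corollary follows. I do not expect any genuine obstacle here: the only step requiring a moment's care is the automorphism-invariance of pseudo-avoidability, which itself reduces to the observation that an automorphism of $G$ induces an automorphism of $L(G)$.
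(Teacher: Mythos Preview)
Your proof is correct and follows essentially the same approach as the paper: apply \Cref{thm:pseudo-avoidable-edges} to obtain a pseudo-avoidable edge, use edge-transitivity to conclude that every edge (in particular the middle edge of the given path) is pseudo-avoidable, and invoke the defining property of pseudo-avoidable edges. The paper states this in a single sentence, while you spell out the automorphism-invariance of pseudo-avoidability explicitly, but the argument is the same.
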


A statement analogous to that of \Cref{thm:ET} fails for longer paths. The graph shown in Figure~\ref{fig:VT} is the line graph of the complete bipartite graph $K_{2,3}$, which is edge-transitive. The $4$-vertex induced path depicted bold in the figure corresponds to a $4$-edge path in $K_{2,3}$ that does not close to a cycle.

By \Cref{thm:avoidable-edges}, the middle edge of any induced $P_4$ in an edge-transitive graph is avoidable. This implies the following consequence for edge-transitive graphs.

\begin{corollary}\label{cor:edge-transitive-induced-P_4}
Every induced $P_4$ in an edge-transitive graph closes to an induced cycle.
\end{corollary}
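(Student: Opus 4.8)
The plan is to derive the statement from \Cref{thm:avoidable-edges} by exploiting the fact that edge-transitivity makes \emph{all} edges behave alike. The first step is the observation that avoidability of an edge is an isomorphism-invariant property: if $\varphi$ is an automorphism of a graph $G$ and $e\in E(G)$ is avoidable, then so is $\varphi(e)$. This is immediate, since $\varphi$ preserves the induced-subgraph relation and therefore sends induced paths to induced paths, induced cycles to induced cycles, and an extension of the induced $P_2$ on the endpoints of $e$ to an extension of the induced $P_2$ on the endpoints of $\varphi(e)$; hence ``every extension closes to an induced cycle'' is transported from $e$ to $\varphi(e)$.

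With this in hand, let $G$ be an edge-transitive graph containing an induced $P_4$, say $a-b-c-d$. In particular $G$ has an edge, so \Cref{thm:avoidable-edges} provides an avoidable edge $e_0\in E(G)$. By edge-transitivity there is an automorphism of $G$ carrying $e_0$ to the middle edge $bc$ of our path, and by the invariance observed above, $bc$ is itself avoidable. The final step is purely definitional: the induced path $a-b-c-d$ is an extension of the induced $P_2$ on $\{b,c\}$ (it is obtained from that $P_2$ by adding the edge $ab$ at one end and $cd$ at the other, and it is induced by hypothesis), so avoidability of the edge $bc$ forces $a-b-c-d$ to be contained in an induced cycle; for an induced path this is exactly the assertion that it closes to an induced cycle.

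I do not expect any genuine obstacle in this argument, as it is a short deduction from \Cref{thm:avoidable-edges}. The only point meriting a moment's care is the invariance claim in the first paragraph -- equivalently, that ``being the middle edge of an induced $P_4$ that lies in no induced cycle'' is preserved by automorphisms -- but this follows directly from the fact that graph automorphisms preserve induced subgraphs. (The analogous arguments for \Cref{thm:VT} and \Cref{thm:ET} are recorded in the surrounding text and use \Cref{thm:avoidable} and \Cref{thm:pseudo-avoidable-edges} in place of \Cref{thm:avoidable-edges}.)
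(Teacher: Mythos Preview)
Your proposal is correct and follows essentially the same route as the paper: the paper simply notes that by \Cref{thm:avoidable-edges} together with edge-transitivity, the middle edge of any induced $P_4$ is avoidable, and the corollary follows from the definition. Your write-up is more explicit about the invariance of avoidability under automorphisms, but this is the same argument.
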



As shown by the example in Figure~\ref{fig:ET}, a statement analogous to that of \Cref{cor:edge-transitive-induced-P_4} fails for longer paths. This $13$-vertex example was found using a computer-assisted search based on a catalogue of vertex-transitive graphs due to~\citet{MR1088288} (which has been extended in the years since then) and performed using SageMath~\cite{sage}. Once the graph was found, a drawing of it given in the figure was identified with the help of~House of Graphs~\cite{MR2973372}.

\begin{figure}[!ht]
    \centering
    \begin{tikzpicture}[vertex/.style={inner sep=2pt,draw,circle},auto]

        \foreach \a in {0,...,12}
        {
            \node[vertex] (\a) at ({\a*(360/13)}:2.5){};
        }

        \draw[] (12)-- (0);

        \foreach \a in {0,...,11}
        {
            \pgfmathtruncatemacro{\b}{\a +1}
            \draw[] (\a) -- (\b);
        }

        \foreach \a in {0,...,12}
        {
            \pgfmathtruncatemacro{\b}{\intcalcMod{\a+5}{13}}
            \draw[] (\a) -- (\b);
        }

        \draw[line width = 1.25] (0)--(12)--(4)--(9)--(10);

        \node[label=right:$ v_1 $] (v1) at (0) {};
        \node[label=right:$ v_2 $] (v2) at (12) {};
        \node[label=above:$ v_3 $] (v3) at (4) {};
        \node[label=below:$ v_4 $] (v4) at (9) {};
        \node[label=below:$ v_5 $] (v5) at (10) {};
    \end{tikzpicture}
    \caption{An edge-transitive graph with an induced five-vertex path $(v_1,v_2,v_3,v_4,v_5)$ that does not close to an induced cycle. In other words, the induced three vertex path $ (v_2,v_3,v_4) $ is not avoidable. However, there also exists an avoidable $ P_3 $, in accordance with \Cref{conj:paths}.}
\label{fig:ET}
\end{figure}
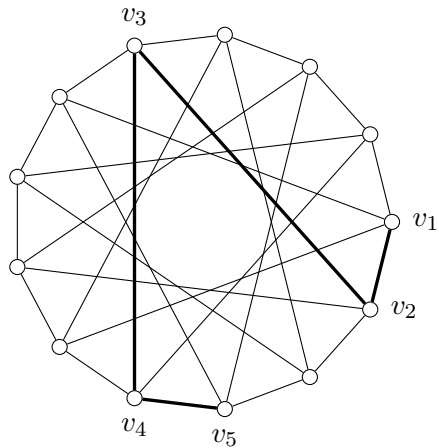

	\section{Conclusion}\label{sec:conclusion}
	

We have introduced the notion of avoidability in graphs, a concept that has been implicitly used in a variety of contexts in algorithmic graph theory.
We discussed both structural and algorithmic aspects of avoidable vertices, including a characterization of avoidable vertices as simplicial vertices in some minimal triangulation of the graph, a new proof of the existence of avoidable vertices, and the fact that one or two avoidable vertices in a graph can be found in linear time using a simple application of Lexicographic Breadth-First Search. This approach was then used to construct a polynomial-time algorithm for the maximum weight clique problem in the class of $1$-perfectly orientable graphs and a superclass of these, the graphs admitting an orientation in which every hole is oriented cyclically. We suggested a generalization of the concept of avoidability from vertices to nontrivial induced paths and proposed a conjecture about their existence (\Cref{conj:paths}). In this respect we showed the validity of the conjectures for edges, that is, two-vertex paths. Many interesting questions remain open.

\begin{sloppypar}
The main one is \Cref{conj:paths}. Theorems~\ref{thm:avoidable} and~\ref{thm:avoidable-edges} imply that it is true for $k \in \{1,2\}$. In turn, this fact and \Cref{thm:higher-chordality} imply that the conjecture is true for the class of $\{C_6,C_7,\ldots\}$-free graphs, which includes several well studied graph classes such as weakly chordal graphs, cocomparability graphs, and AT-free graphs.
\end{sloppypar}

While we have given a linear-time algorithm to compute two distinct avoidable vertices in any nontrivial graph (\Cref{thm:computation}), it would also be of interest to devise an algorithm to compute \emph{all} avoidable vertices that is more efficient than the na\"ive approach.


\begin{sloppypar}
Having introduced the class of hole-cyclically orientable graphs as a generalization of $1$-perfectly orientable graphs, we can ask for structural properties of these graphs. In particular, it is not known whether they can be recognized in polynomial time. The complexity of the maximum independent set and $k$-coloring problems (for fixed $k\ge 3$) is also open both for $1$-perfectly orientable and for hole-cyclically orientable graphs.
\end{sloppypar}

	\subsection*{Acknowledgements}\label{sec:ack}
	
    The authors are grateful to Ekkehard K\"ohler, Matja\v z Krnc, Irena Penev, Robert Scheffler, and Mikhail Vyalyi for interest in their work and helpful remarks, and to Gordon Royle for providing details about reference~\cite{MR1088288}. The work for this paper was done in the framework of two bilateral projects between Germany and Slovenia, financed by DAAD and the Slovenian Research Agency (BI-DE/$17$-$19$-$18$ and BI-DE/$19$-$20$-$04$). The third named author was partially funded by Russian Academic Excellence Project `5-100'. The work of the fourth named author is supported in part by the Slovenian Research Agency (I0-0035, research program P1-0285 and research projects J1-9110, N1-0102). Part of the work was done while the fourth named author was visiting Osaka Prefecture University in Japan, under the operation Mobility of Slovene higher education teachers 2018--2021, co-financed by the Republic of Slovenia and the European Union under the European Social Fund.

	\bibliographystyle{plainnat}
	\bibliography{avoidable-vertices}

\begin{thebibliography}{50}
\providecommand{\natexlab}[1]{#1}
\providecommand{\url}[1]{\texttt{#1}}
\expandafter\ifx\csname urlstyle\endcsname\relax
  \providecommand{\doi}[1]{doi: #1}\else
  \providecommand{\doi}{doi: \begingroup \urlstyle{rm}\Url}\fi

\bibitem[Aboulker et~al.(2015)Aboulker, Charbit, Trotignon, and
  Vu{\v{s}}kovi{\'c}]{aboulker2015vertex}
Pierre Aboulker, Pierre Charbit, Nicolas Trotignon, and Kristina
  Vu{\v{s}}kovi{\'c}.
\newblock Vertex elimination orderings for hereditary graph classes.
\newblock \emph{Discrete Mathematics}, 338\penalty0 (5):\penalty0 825--834,
  2015.

\bibitem[Addario-Berry et~al.(2008)Addario-Berry, Chudnovsky, Havet, Reed, and
  Seymour]{MR2462311}
Louigi Addario-Berry, Maria Chudnovsky, Fr\'{e}d\'{e}ric Havet, Bruce Reed, and
  Paul Seymour.
\newblock Bisimplicial vertices in even-hole-free graphs.
\newblock \emph{J. Combin. Theory Ser. B}, 98\penalty0 (6):\penalty0
  1119--1164, 2008.

\bibitem[Bang-Jensen et~al.(1993)Bang-Jensen, Huang, and
  Prisner]{bangjensen1993tournament}
J{\o}rgen Bang-Jensen, Jing Huang, and Erich Prisner.
\newblock In-tournament digraphs.
\newblock \emph{Journal of Combinatorial Theory, Series B}, 59\penalty0
  (2):\penalty0 267--287, 1993.

\bibitem[Barot et~al.(2006)Barot, Geiss, and Zelevinsky]{barot2006cluster}
Michael Barot, Christof Geiss, and Andrei Zelevinsky.
\newblock Cluster algebras of finite type and positive symmetrizable matrices.
\newblock \emph{Journal of the London Mathematical Society}, 73\penalty0
  (3):\penalty0 545--564, 2006.

\bibitem[Beisegel et~al.(2019)Beisegel, Denkert, K\"{o}hler, Krnc, Piva\v{c},
  Scheffler, and Strehler]{dmtcs:5572}
Jesse Beisegel, Carolin Denkert, Ekkehard K\"{o}hler, Matja\v{z} Krnc, Nevena
  Piva\v{c}, Robert Scheffler, and Martin Strehler.
\newblock {On the End-Vertex Problem of Graph Searches}.
\newblock \emph{Discrete Math. Theor. Comput. Sci.}, {21}, ICGT 2018\penalty0
  (1):\penalty0 Paper No. 13, 20, 2019.
\newblock URL \url{https://dmtcs.episciences.org/5572}.

\bibitem[Berry and Bordat(1998)]{berry1998separability}
Anne Berry and Jean-Paul Bordat.
\newblock Separability generalizes {D}irac's theorem.
\newblock \emph{Discrete Applied Mathematics}, 84\penalty0 (1-3):\penalty0
  43--53, 1998.

\bibitem[Berry et~al.(2004)Berry, Blair, Heggernes, and
  Peyton]{berry2004maximum}
Anne Berry, Jean R.~S. Blair, Pinar Heggernes, and Barry~W. Peyton.
\newblock Maximum cardinality search for computing minimal triangulations of
  graphs.
\newblock \emph{Algorithmica}, 39\penalty0 (4):\penalty0 287--298, 2004.

\bibitem[Berry et~al.(2010)Berry, Blair, Bordat, and Simonet]{berry2010graph}
Anne Berry, Jean R.~S. Blair, Jean-Paul Bordat, and Genevieve Simonet.
\newblock Graph extremities defined by search algorithms.
\newblock \emph{Algorithms}, 3\penalty0 (2):\penalty0 100--124, 2010.

\bibitem[{Bhattacharya} and {Kaller}(1997)]{bhattacharya1997ano}
Binay~K. {Bhattacharya} and Damon {Kaller}.
\newblock {An \(O(m+n\log n)\) algorithm for the maximum-clique problem in
  circular-arc graphs.}
\newblock \emph{{J. Algorithms}}, 25\penalty0 (2):\penalty0 336--358, 1997.
\newblock ISSN 0196-6774.

\bibitem[Blair and Peyton(1993)]{MR1320296}
Jean R.~S. Blair and Barry Peyton.
\newblock An introduction to chordal graphs and clique trees.
\newblock In \emph{Graph theory and sparse matrix computation}, volume~56 of
  \emph{IMA Vol. Math. Appl.}, pages 1--29. Springer, New York, 1993.

\bibitem[Bouchitt{\'e} and Todinca(2001)]{MR1857397}
Vincent Bouchitt{\'e} and Ioan Todinca.
\newblock Treewidth and minimum fill-in: grouping the minimal separators.
\newblock \emph{SIAM J. Comput.}, 31\penalty0 (1):\penalty0 212--232, 2001.

\bibitem[Bouchitt\'{e} and Todinca(2002)]{MR1896345}
Vincent Bouchitt\'{e} and Ioan Todinca.
\newblock Listing all potential maximal cliques of a graph.
\newblock \emph{Theoret. Comput. Sci.}, 276\penalty0 (1-2):\penalty0 17--32,
  2002.

\bibitem[Brandst\"{a}dt et~al.(1999)Brandst\"{a}dt, Le, and Spinrad]{MR1686154}
Andreas Brandst\"{a}dt, Van~Bang Le, and Jeremy~P. Spinrad.
\newblock \emph{Graph {C}lasses: a {S}urvey}.
\newblock SIAM Monographs on Discrete Mathematics and Applications. Society for
  Industrial and Applied Mathematics (SIAM), Philadelphia, PA, 1999.

\bibitem[Bre\v{s}ar et~al.(2018)Bre\v{s}ar, Hartinger, Kos, and
  Milani\v{c}]{MR3853110}
Bo\v{s}tjan Bre\v{s}ar, Tatiana~Romina Hartinger, Tim Kos, and Martin
  Milani\v{c}.
\newblock 1-perfectly orientable {$K_4$}-minor-free and outerplanar graphs.
\newblock \emph{Discrete Appl. Math.}, 248:\penalty0 33--45, 2018.

\bibitem[Brinkmann et~al.(2013)Brinkmann, Coolsaet, Goedgebeur, and
  M\'{e}lot]{MR2973372}
Gunnar Brinkmann, Kris Coolsaet, Jan Goedgebeur, and Hadrien M\'{e}lot.
\newblock House of {G}raphs: a database of interesting graphs.
\newblock \emph{Discrete Appl. Math.}, 161\penalty0 (1-2):\penalty0 311--314,
  2013.

\bibitem[Chv\'atal et~al.(2002)Chv\'atal, Rusu, and Sritharan]{MR1927566}
Va{\v s}ek Chv\'atal, Irena Rusu, and R.~Sritharan.
\newblock Dirac-type characterizations of graphs without long chordless cycles.
\newblock \emph{Discrete Math.}, 256\penalty0 (1-2):\penalty0 445--448, 2002.

\bibitem[Corneil et~al.(2010)Corneil, K{\"o}hler, and
  Lanlignel]{corneil2010end}
Derek~G. Corneil, Ekkehard K{\"o}hler, and Jean-Marc Lanlignel.
\newblock On end-vertices of lexicographic breadth first searches.
\newblock \emph{Discrete Applied Mathematics}, 158\penalty0 (5):\penalty0
  434--443, 2010.

\bibitem[Dirac(1961)]{MR0130190}
G.~A. Dirac.
\newblock On rigid circuit graphs.
\newblock \emph{Abh. Math. Sem. Univ. Hamburg}, 25:\penalty0 71--76, 1961.

\bibitem[Farber and Jamison(1986)]{MR844046}
Martin Farber and Robert~E. Jamison.
\newblock Convexity in graphs and hypergraphs.
\newblock \emph{SIAM J. Algebraic Discrete Methods}, 7\penalty0 (3):\penalty0
  433--444, 1986.

\bibitem[Fomin and Villanger(2010)]{MR2853937}
Fedor~V. Fomin and Yngve Villanger.
\newblock Finding induced subgraphs via minimal triangulations.
\newblock In \emph{S{TACS} 2010: 27th {I}nternational {S}ymposium on
  {T}heoretical {A}spects of {C}omputer {S}cience}, volume~5 of \emph{LIPIcs.
  Leibniz Int. Proc. Inform.}, pages 383--394. Schloss Dagstuhl. Leibniz-Zent.
  Inform., Wadern, 2010.

\bibitem[Fomin et~al.(2015)Fomin, Todinca, and Villanger]{MR3311877}
Fedor~V. Fomin, Ioan Todinca, and Yngve Villanger.
\newblock Large induced subgraphs via triangulations and {CMSO}.
\newblock \emph{SIAM J. Comput.}, 44\penalty0 (1):\penalty0 54--87, 2015.

\bibitem[Fulkerson and Gross(1965)]{MR0186421}
D.~R. Fulkerson and O.~A. Gross.
\newblock Incidence matrices and interval graphs.
\newblock \emph{Pacific J. Math.}, 15:\penalty0 835--855, 1965.

\bibitem[Golumbic(2004)]{MR2063679}
Martin~Charles Golumbic.
\newblock \emph{Algorithmic {G}raph {T}heory and {P}erfect {G}raphs}, volume~57
  of \emph{Annals of Discrete Mathematics}.
\newblock Elsevier Science B.V., Amsterdam, second edition, 2004.

\bibitem[Golumbic and Goss(1978)]{golumbicGoss1978}
Martin~Charles Golumbic and Clinton~F. Goss.
\newblock Perfect elimination and chordal bipartite graphs.
\newblock \emph{J. Graph Theory}, 2:\penalty0 155--163, 1978.

\bibitem[Gorzny and Huang(2017)]{gorzny2017end}
Jan Gorzny and Jing Huang.
\newblock End-vertices of {LBFS} of ({AT}-free) bigraphs.
\newblock \emph{Discrete Applied Mathematics}, 225:\penalty0 87--94, 2017.

\bibitem[Habib et~al.(2000)Habib, McConnell, Paul, and Viennot]{MR1745069}
Michel Habib, Ross McConnell, Christophe Paul, and Laurent Viennot.
\newblock Lex-{BFS} and partition refinement, with applications to transitive
  orientation, interval graph recognition and consecutive ones testing.
\newblock \emph{Theoret. Comput. Sci.}, 234\penalty0 (1-2):\penalty0 59--84,
  2000.

\bibitem[Hartinger and Milani{\v{c}}(2017)]{MR3647815}
Tatiana~Romina Hartinger and Martin Milani{\v{c}}.
\newblock Partial characterizations of $1$-perfectly orientable graphs.
\newblock \emph{J. Graph Theory}, 85\penalty0 (2):\penalty0 378--394, 2017.

\bibitem[Hartinger and Milani\v{c}(2017)]{MR3634143}
Tatiana~Romina Hartinger and Martin Milani\v{c}.
\newblock 1-perfectly orientable graphs and graph products.
\newblock \emph{Discrete Math.}, 340\penalty0 (7):\penalty0 1727--1737, 2017.

\bibitem[Hayward(1996)]{MR1363691}
Ryan Hayward.
\newblock Generating weakly triangulated graphs.
\newblock \emph{J. Graph Theory}, 21\penalty0 (1):\penalty0 67--69, 1996.

\bibitem[Heggernes(2006)]{heggernes2006minimal}
Pinar Heggernes.
\newblock Minimal triangulations of graphs: A survey.
\newblock \emph{Discrete Mathematics}, 306\penalty0 (3):\penalty0 297--317,
  2006.

\bibitem[Hochbaum(1983)]{MR712925}
Dorit~S. Hochbaum.
\newblock Efficient bounds for the stable set, vertex cover and set packing
  problems.
\newblock \emph{Discrete Appl. Math.}, 6\penalty0 (3):\penalty0 243--254, 1983.

\bibitem[Jamison and Olariu(1988)]{MR956561}
B.~Jamison and S.~Olariu.
\newblock On the semi-perfect elimination.
\newblock \emph{Adv. in Appl. Math.}, 9\penalty0 (3):\penalty0 364--376, 1988.

\bibitem[Kammer and Tholey(2014)]{kammer2014approximation}
Frank Kammer and Torsten Tholey.
\newblock Approximation algorithms for intersection graphs.
\newblock \emph{Algorithmica}, 68\penalty0 (2):\penalty0 312--336, 2014.

\bibitem[Krithika et~al.(2013)Krithika, Mathew, Narayanaswamy, and
  Sadagopan]{MR3115297}
R.~Krithika, Rogers Mathew, N.~S. Narayanaswamy, and N.~Sadagopan.
\newblock A {D}irac-type characterization of {$k$}-chordal graphs.
\newblock \emph{Discrete Math.}, 313\penalty0 (24):\penalty0 2865--2867, 2013.

\bibitem[McConnell(2003)]{mcconnell2003linear}
Ross~M McConnell.
\newblock Linear-time recognition of circular-arc graphs.
\newblock \emph{Algorithmica}, 37\penalty0 (2):\penalty0 93--147, 2003.

\bibitem[McKay and Royle(1990)]{MR1088288}
Brendan~D. McKay and Gordon~F. Royle.
\newblock The transitive graphs with at most {$26$} vertices.
\newblock \emph{Ars Combin.}, 30:\penalty0 161--176, 1990.

\bibitem[Ohtsuki et~al.(1976)Ohtsuki, Cheung, and Fujisawa]{ohtsuki1976minimal}
Tatsuo Ohtsuki, Lap~Kit Cheung, and Toshio Fujisawa.
\newblock Minimal triangulation of a graph and optimal pivoting order in a
  sparse matrix.
\newblock \emph{J. Math. Anal. Appl.}, 54\penalty0 (3):\penalty0 622--633,
  1976.

\bibitem[Orlin(2013)]{MR3210838}
James~B. Orlin.
\newblock Max flows in {$O(nm)$} time, or better.
\newblock In \emph{S{TOC}'13---{P}roceedings of the 2013 {ACM} {S}ymposium on
  {T}heory of {C}omputing}, pages 765--774. ACM, New York, 2013.

\bibitem[Rose(1970{\natexlab{a}})]{MR0270957}
Donald~J. Rose.
\newblock Triangulated graphs and the elimination process.
\newblock \emph{J. Math. Anal. Appl.}, 32:\penalty0 597--609,
  1970{\natexlab{a}}.

\bibitem[Rose(1970{\natexlab{b}})]{MR2939965}
Donald~J. Rose.
\newblock \emph{Symmetric elimination on sparse positive definite systems and
  the potential flow network problem}.
\newblock ProQuest LLC, Ann Arbor, MI, 1970{\natexlab{b}}.
\newblock Thesis (Ph.D.)--Harvard University.

\bibitem[Rose(1972)]{MR0341833}
Donald~J. Rose.
\newblock \emph{A graph-theoretic study of the numerical solution of sparse
  positive definite systems of linear equations}, pages 183--217.
\newblock Academic Press, New York, 1972.

\bibitem[Rose et~al.(1976)Rose, Tarjan, and Lueker]{rose1976algorithmic}
Donald~J. Rose, R.~Endre Tarjan, and George~S. Lueker.
\newblock Algorithmic aspects of vertex elimination on graphs.
\newblock \emph{SIAM J. Comput.}, 5\penalty0 (2):\penalty0 266--283, 1976.

\bibitem[SageMath(2018)]{sage}
Inc. SageMath.
\newblock \emph{CoCalc Collaborative Computation Online}, 2018.
\newblock \texttt{https://cocalc.com/}.

\bibitem[Shih and Hsu(1989)]{MR998461}
Wei~Kuan Shih and Wen~Lian Hsu.
\newblock An {$O(n\log n+m\log\log n)$} maximum weight clique algorithm for
  circular-arc graphs.
\newblock \emph{Inform. Process. Lett.}, 31\penalty0 (3):\penalty0 129--134,
  1989.

\bibitem[Skrien(1982)]{MR666799}
Dale~J. Skrien.
\newblock A relationship between triangulated graphs, comparability graphs,
  proper interval graphs, proper circular-arc graphs, and nested interval
  graphs.
\newblock \emph{J. Graph Theory}, 6\penalty0 (3):\penalty0 309--316, 1982.

\bibitem[Spinrad(2003)]{MR1971502}
Jeremy~P. Spinrad.
\newblock \emph{Efficient {G}raph {R}epresentations}, volume~19 of \emph{Fields
  Institute Monographs}.
\newblock American Mathematical Society, Providence, RI, 2003.

\bibitem[Urrutia and Gavril(1992)]{urrutia1992algorithm}
Jorge Urrutia and Fanica Gavril.
\newblock An algorithm for fraternal orientation of graphs.
\newblock \emph{Information processing letters}, 41\penalty0 (5):\penalty0
  271--274, 1992.

\bibitem[Voloshin(1982)]{MR763692}
V.~I. Voloshin.
\newblock Properties of triangulated graphs.
\newblock In \emph{Operations research and programming}, pages 24--32.
  ``Shtiintsa'', Kishinev, 1982.

\bibitem[West(1996)]{MR1367739}
Douglas~B. West.
\newblock \emph{Introduction to {G}raph {T}heory}.
\newblock Prentice Hall, Inc., Upper Saddle River, NJ, 1996.

\bibitem[Ye and Borodin(2012)]{MR2916349}
Yuli Ye and Allan Borodin.
\newblock Elimination graphs.
\newblock \emph{ACM Trans. Algorithms}, 8\penalty0 (2):\penalty0 Art. 14, 23,
  2012.

\end{thebibliography}
	
\end{document}